\numberwithin{equation}{section}
\newtheorem{letterthm}{Theorem}
\newtheorem{lettercor}[letterthm]{Corollary}
\newtheorem{letterconj}[letterthm]{Conjecture}
\newtheorem{thm}{Theorem}[section]
\newtheorem{lem}[thm]{Lemma}
\newtheorem{cor}[thm]{Corollary}
\newtheorem{prop}[thm]{Proposition}
\theoremstyle{definition}
\newtheorem{rem}[thm]{Remark}
\newtheorem{df}[thm]{Definition}
\newtheorem*{definition}{Definition}
\newtheorem{claim}[thm]{Claim}
\newcommand{\R}{\mathbf{R}}
\newcommand{\C}{\mathbf{C}}
\newcommand{\F}{\mathbf{F}}
\newcommand{\Q}{\mathbf{Q}}
\newcommand{\N}{\mathbf{N}}
\newcommand{\cM}{\mathcal{M}}
\newcommand{\Ad}{\operatorname{Ad}}
\newcommand{\id}{\text{\rm id}}
\newcommand{\Inn}{\operatorname{Inn}}
\newcommand{\Aut}{\operatorname{Aut}}
\newcommand{\Out}{\operatorname{Out}}
\newcommand{\rL}{\mathord{\text{\rm L}}}
\newcommand{\rC}{\mathord{\text{\rm C}}}
\newcommand{\diag}{\mathord{\text{\rm diag}}}
\newcommand{\rE}{\mathord{\text{\rm E}}}
\newcommand{\Sd}{\mathord{\text{\rm Sd}}}
\newcommand{\alg}{\text{alg}}
\newcommand{\Tr}{\mathord{\text{\rm Tr}}}
\newcommand{\Ball}{\mathord{\text{\rm Ball}}}
\newcommand{\spn}{\mathord{\text{\rm span}}}
\newcommand{\ovt}{\mathbin{\overline{\otimes}}}
\newcommand{\bigovt}{\mathbin{\overline{\bigotimes}}}
\newcommand{\vphi}{\varphi}
\newcommand{\II}{{\rm II}}
\newcommand{\III}{{\rm III}}
\begin{document}

\title[Fullness and Connes' $\tau$ invariant of type III tensor product factors]{Fullness and Connes' $\boldsymbol{\tau}$ invariant of \\ type III tensor product factors}

\begin{abstract}
We show that the tensor product $M \ovt N$ of any two full factors $M$ and $N$ (possibly of type ${\rm III}$) is full and we compute Connes' invariant $\tau(M \ovt N)$ in terms of $\tau(M)$ and $\tau(N)$. The key novelty is an enhanced spectral gap property for full factors of type $\III$. Moreover, for full factors of type $\III$ with almost periodic states, we prove an optimal spectral gap property. As an application of our main result, we also show that for any full factor $M$ and any non-type ${\rm I}$ amenable factor $P$, the tensor product factor $M \ovt P$ has a unique McDuff decomposition, up to stable unitary conjugacy. 
\end{abstract}

\address{Laboratoire de Math\'ematiques d'Orsay\\ Universit\'e Paris-Sud\\ CNRS\\ Universit\'e Paris-Saclay\\ 91405 Orsay\\ FRANCE}

\author{Cyril Houdayer}
\email{cyril.houdayer@math.u-psud.fr}

\author{Amine Marrakchi}
\email{amine.marrakchi@math.u-psud.fr}

\thanks{CH and AM are supported by ERC Starting Grant GAN 637601}

\author{Peter Verraedt}
\email{peter.verraedt@math.u-psud.fr}

\thanks{PV is supported by a Ph.D.\ fellowship of the Research Foundation -- Flanders (FWO) and by ERC Starting Grant GAN 637601}

\subjclass[2010]{46L10, 46L36, 46L40, 46L55}

\keywords{Full factors; Spectral gap; McDuff factors; Popa's deformation/rigidity theory; Type ${\rm III}$ factors; Ultraproduct von Neumann algebras}

\maketitle


\section{Introduction and statement of the main results}

\subsection*{Introduction} 
The notion of {\em fullness} has always played a central role in the theory of von Neumann algebras. A type ${\rm II_1}$ factor $M$ is \emph{full}, or equivalently does not have property Gamma of Murray and von Neumann \cite{MvN43}, if every uniformly bounded net $(x_i)_{i \in I}$ that is \emph{central}, meaning that $\lim_i \|x_i a - a x_i\|_2 = 0$ for every $a \in M$, must be {\em trivial}, meaning that $\lim_i \|x_i - \tau(x_i)1 \|_2 =0$. Murray and von Neumann \cite{MvN43} used this property to provide the first example of two nonisomorphic factors of type $\II_1$ with separable predual. Indeed, they showed that the free group factor $\rL(\F_2)$ is full, while the unique hyperfinite factor of type $\II_1$ is not. Connes \cite{Co75b} proved that any full factor of type ${\rm II_1}$ with separable predual satisfies the following spectral gap property: there exist $\kappa > 0$ and a family $a_1, \dots, a_k \in M$ such that
\begin{equation}\label{eq-intro-1}
\forall x \in M, \quad \|x - \tau(x)1\|_2^2 \leq \kappa \sum_{j = 1}^k \|xa_j - a_j x\|_2^2.
\end{equation}
The above spectral gap property for full factors of type ${\rm II_1}$ with separable predual was a crucial tool in the proof of the uniqueness of the amenable factor of type ${\rm II_1}$ with separable predual \cite{Co75b}. The inequality in \eqref{eq-intro-1} also implies that the tensor product of two full  factors of type $\II_1$ is still full.

The notion of fullness for type ${\rm III}$ factors as well as the $\tau$-invariant were introduced by Connes. Following \cite{Co74}, we say that a factor $M$ is {\em full} if every uniformly bounded net $(x_i)_{i \in I}$ in $M$ that is {\em centralizing}, meaning that $\lim_i \|x_i \varphi - \varphi x_i\| = 0$ for all $\varphi \in M_\ast$, must be {\em trivial}, meaning that there exists a bounded net $(\lambda_i)_{i \in I}$ in $\C$ such that $x_i - \lambda_i 1 \to 0$ strongly as $i \to \infty$. 
By \cite[Theorem 3.1]{Co74}, for any full factor $M$, the subgroup of inner automorphisms $\Inn(N)$ is closed in the group of automorphisms $\Aut(M)$ and hence the quotient group $\Out(M) = \Aut(M) / \Inn(M)$ inherits a structure of complete topological group. Connes' invariant $\tau(M)$ is then defined as the weakest topology on $\R$ that makes the canonical modular homomorphism $\R \to \Out(M)$ continuous. 

 The motivation behind the present work comes from the following two fundamental open questions:

\begin{itemize}

\item [(Q$1$)] If $M_1$ and $M_2$ are full factors of type $\III$, is the tensor product $M_1 \ovt M_2$ also full?

\item [(Q$2$)] In that case, is it possible to compute $\tau(M_1 \ovt M_2)$ in terms of $\tau(M_1)$ and $\tau(M_2)$?

\end{itemize} 

The absence of a faithful normal tracial state on full factors of type ${\rm III}$ makes their study peculiarly difficult. In particular, a good type $\III$ analogue of Connes' spectral gap theorem is missing. Recent progress in this direction was made by the second named author in \cite[Theorem A]{Ma16}, but questions (Q$1$) and (Q$2$) remained out of reach.

\subsection*{Statement of the main results} The main novelty of the present paper is an enhanced  spectral gap property for full factors of type ${\rm III}$, which relies on \cite[Theorem A]{Ma16}. More precisely, we prove in Theorem \ref{strong-gap} below that for any full factor $M$ of type ${\rm III}$, there exist a state $\varphi \in M_\ast$, some constant $\kappa > 0$ and a family $a_1, \dots, a_k \in M$ satisfying $a_j \varphi a_j^* \leq \varphi$ for every $j \in \{1, \dots, k\}$ such that
\begin{equation}\label{eq-intro-2}
\forall x \in M, \quad \| x-\varphi(x)1 \|_\varphi^{2} \leq \kappa \left( \sum_{j = 1}^k \| xa_j-a_jx \|_\varphi^{2} + \inf_{ \lambda \in \R_+} \| x \xi_\varphi -\lambda \xi_\varphi x \|^{2} \right).
\end{equation}
Observe that compared to Connes' spectral gap theorem for full factors of type ${\rm II_1}$ as in \eqref{eq-intro-1}, there is an additional term in \eqref{eq-intro-2} given by $\inf_{ \lambda \in \R_+} \| x \xi_\varphi -\lambda \xi_\varphi x \|^{2}$, where $\xi_\varphi \in \rL^{2}(M)$ is the vector implementing $\varphi$ in the standard form of $M$. Nevertheless, \eqref{eq-intro-2} is robust enough to pass to tensor products $M \ovt N$, where $N$ is any von Neumann algebra. This allows us to show that for any full factor $M$ and any von Neumann algebra $N$, every centralizing net in the tensor product $M \ovt N$ must asymptotically lie in $N$.

We also prove in Theorem \ref{strong-gap} below an enhanced spectral gap property for the outer automorphism group of a full factor, which relies on \cite[Lemma 5.3]{Ma16} (see \cite[Corollary 5]{Jo81} for the analogous result in the type ${\rm II_1}$ case). This, in turn, allows us to show that for any full factor $M$ and any von Neumann algebra $N$, the natural group homomorphism $\Out(M) \times \Out(N) \to \Out(M \ovt N)$ is a homeomorphism onto its range.

More precisely, our first main result is:

\begin{letterthm}\label{main-thm-full-tensor-products}
Let $M$ be any full factor and $N$ any von Neumann algebra. Then for every uniformly bounded centralizing net $(x_i)_{i \in I}$ in $M \ovt N$, we can find a uniformly bounded centralizing net $(y_i)_{i \in I}$ in $N$ such that $x_i- y_i \to 0$ strongly as $i \to \infty$.

 Moreover, the natural group homomorphism $\Out(M) \times \Out(N) \to \Out(M \ovt N)$ is a homeomorphism onto its range.

\end{letterthm}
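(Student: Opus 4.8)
\emph{Proof sketch.}

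Let $\varphi\in M_*$ be the faithful normal state and $\kappa>0$, $a_1,\dots,a_k\in M$ (with $a_j\varphi a_j^*\le\varphi$) furnished by Theorem~\ref{strong-gap}. Fix a faithful normal state $\psi$ on $N$ (if $N$ is not $\sigma$-finite, use instead a faithful normal semifinite weight and adapt), put $\omega=\varphi\otimes\psi$ with implementing vector $\xi_\omega=\xi_\varphi\otimes\xi_\psi$, and let $E=\varphi\otimes\id_N\colon M\ovt N\to 1\ovt N$. Since $E$ commutes with $\sigma^\omega=\sigma^\varphi\otimes\sigma^\psi$, it is a normal $\omega$-preserving conditional expectation; its Jones projection on $\rL^2(M\ovt N)=\rL^2(M)\ovt\rL^2(N)$ is $e_E=P_\varphi\otimes 1$, where $P_\varphi$ is the projection onto $\C\xi_\varphi$, and $\Delta_\omega^{1/2}=\Delta_\varphi^{1/2}\otimes\Delta_\psi^{1/2}$ commutes with $e_E$. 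The heart of the matter is the tensorised form of \eqref{eq-intro-2}: for every $x\in M\ovt N$,
\[
\|x-E(x)\|_\omega^{2}\ \le\ \kappa\Big(\sum_{j=1}^k\|x(a_j\otimes 1)-(a_j\otimes 1)x\|_\omega^{2}+\inf_{\lambda\in\R_+}\|(x-E(x))\xi_\omega-\lambda\,\xi_\omega(x-E(x))\|^{2}\Big).\qquad(\dagger)
\]
To prove $(\dagger)$ one writes $x$ as a generalised matrix over $M$ along an orthonormal basis of $\rL^2(N)$, applies \eqref{eq-intro-2} to the relevant entries and resums; using $\xi_\omega y=\Delta_\omega^{1/2}y\xi_\omega$ and the commutation of $\Delta_\omega^{1/2}$ with $e_E$, the modular defect of $x$ splits orthogonally into that of $E(x)$ and that of $x-E(x)$, which is precisely what turns the last term of \eqref{eq-intro-2} into the centred term of $(\dagger)$. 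The condition $a_j\varphi a_j^*\le\varphi$ keeps all the vectors involved in $\rL^2$ and makes the resummation legitimate. I expect this step --- in particular running the slicing uniformly in $N$, i.e.\ taming the unbounded operators $\Delta_\varphi^{1/2}$ and $\Delta_\psi^{1/2}$ so that the resummed modular term is the one appearing in $(\dagger)$ rather than a possibly infinite one --- to be the main obstacle.

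Granting $(\dagger)$, let $(x_i)_{i\in I}$ be a uniformly bounded centralizing net in $M\ovt N$ and set $y_i:=E(x_i)\in N$. Then $(y_i)$ is uniformly bounded and centralizing in $N$, since for $\psi'\in N_*$ we have $y_i\psi'-\psi' y_i=\bigl(x_i(\psi'\circ E)-(\psi'\circ E)x_i\bigr)|_N$, whose norm is at most $\|x_i(\psi'\circ E)-(\psi'\circ E)x_i\|\to 0$. To see that $x_i-y_i\to 0$ strongly, equivalently (the net being bounded) $\|x_i-y_i\|_\omega\to 0$, apply $(\dagger)$ to $x_i$. Since $a_j\otimes 1\in M\otimes 1$ commutes with $y_i\in 1\otimes N$, the commutator term equals $\sum_j\|x_i(a_j\otimes1)-(a_j\otimes1)x_i\|_\omega^2\to 0$ by the standard fact that a uniformly bounded centralizing net asymptotically commutes, in the norm $\|\cdot\|_\omega$, with every element. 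For the modular term put $z_i:=x_i-y_i$, a centralizing net with $E(z_i)=0$, and $\eta_i:=z_i\xi_\omega\in(\rL^2(M)\ominus\C\xi_\varphi)\ovt\rL^2(N)$, so that $\|\eta_i\|$ and $\|\Delta_\omega^{1/2}\eta_i\|=\|\xi_\omega z_i\|=\|z_i^*\|_\omega$ are bounded. Two consequences of $(z_i)$ being centralizing finish the proof: (i) $\|\sigma^\omega_t(z_i)-z_i\|_\omega\to 0$ for every $t$ --- because along any ultrafilter $(z_i)$ defines an element of the asymptotic centralizer, hence of the centralizer of the ultrapower state, hence is fixed by the ultrapower modular flow --- so that $\|\Delta_\omega^{it}\eta_i-\eta_i\|\to 0$; and (ii) $\|\eta_i\|^2-\|\Delta_\omega^{1/2}\eta_i\|^2=\omega(z_i^*z_i)-\omega(z_iz_i^*)\to 0$, obtained by evaluating the functional $z_i\omega-\omega z_i$ at $z_i^*$. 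By (i) the spectral measure of $\log\Delta_\omega$ carried by $\eta_i$ concentrates near $0$; combined with (ii), which rules out escape of mass to $+\infty$ under the weight $e^s$, this yields $\|(1-\Delta_\omega^{1/2})\eta_i\|\to 0$, hence $\inf_{\lambda\in\R_+}\|\eta_i-\lambda\Delta_\omega^{1/2}\eta_i\|^2\to 0$. Feeding this into $(\dagger)$ gives $\|x_i-y_i\|_\omega\to 0$.

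For the ``moreover'' part, Theorem~\ref{strong-gap} also provides an enhanced spectral gap for the outer automorphism group of a full factor: for the same data, whenever $\theta\in\Aut(M)$ and $u\in\mathcal{U}(M)$, the quantity $\sum_j\|\theta(a_j)-ua_ju^*\|_\varphi^2$, together with appropriate modular terms, controls both a distance from $\theta$ to $\id_M$ in $\Aut(M)$ and the distance from $u$ to $\C 1$. Slicing over $\rL^2(N)$ exactly as for $(\dagger)$ tensorises this: applying the tensorised inequality to $\theta=\alpha\otimes\beta$, $u\in\mathcal{U}(M\ovt N)$ and $a_j\otimes1$ --- and noting that $\Ad(u)(\alpha(a_j)\otimes1)=u(a_j\otimes1)u^*$ when $\Ad(u)=\alpha\otimes\beta$ --- one finds that $\sum_j\|\alpha(a_j)\otimes1-u(a_j\otimes1)u^*\|_\omega^2$ plus modular terms controls the distance from $\alpha$ to $\id_M$ in $\Out(M)$, the distance from $\beta$ to $\id_N$ in $\Out(N)$, and the distance from $u$ to $\mathcal{U}(1\ovt N)=\mathcal{U}\bigl((M\otimes1)'\cap M\ovt N\bigr)$. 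Taking $\theta=\alpha\otimes\beta$ to be exactly inner makes the right-hand side vanish and forces $\alpha$ and $\beta$ to be inner, which is injectivity of $\Out(M)\times\Out(N)\to\Out(M\ovt N)$. For continuity of the inverse: if $\alpha_n\otimes\beta_n\to\id$ in $\Out(M\ovt N)$, choose $u_n\in\mathcal{U}(M\ovt N)$ with $\Ad(u_n)\circ(\alpha_n\otimes\beta_n)\to\id$ in $\Aut(M\ovt N)$; then $u_n(\alpha_n(a_j)\otimes1)u_n^*\to a_j\otimes1$ strongly and the relevant modular terms tend to $0$, so the tensorised inequality gives $\alpha_n\to\id$ in $\Out(M)$ and $\beta_n\to\id$ in $\Out(N)$. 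Since the map $([\alpha],[\beta])\mapsto[\alpha\otimes\beta]$ is plainly continuous, it is a homeomorphism onto its range. As for $(\dagger)$, pinning down the exact shape of the modular correction terms and their tensorisation is the point requiring the most care.
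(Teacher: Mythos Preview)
Your overall strategy is right: tensorize the spectral gap inequality from Theorem~\ref{strong-gap}, set $y_i=\rE_\varphi(x_i)$, and feed the centralizing net into the tensorized inequality. But the argument has one real gap and two places where it diverges unnecessarily from the cleaner route.

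\textbf{The gap: how to tensorize.} Your proposed proof of $(\dagger)$ by ``writing $x$ as a generalised matrix over $M$ along an orthonormal basis of $\rL^2(N)$ and resumming'' does not work as stated. If you slice along an ONB $(e_k)$ of $\rL^2(N)$ and apply \eqref{eq-intro-2} to each component $\eta_k$, each component wants its own optimal $\lambda_k$ in the term $\inf_\lambda\|(\lambda\Delta_\varphi^{1/2}-1)\eta_k\|^2$, and summing with a common $\lambda$ only yields control by $\|(\lambda\Delta_\varphi^{1/2}\otimes 1-1)\eta\|^2$, which involves $\Delta_\varphi^{1/2}\otimes 1$ rather than $\Delta_\omega^{1/2}=\Delta_\varphi^{1/2}\otimes\Delta_\psi^{1/2}$. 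The paper's device (Lemma~\ref{lem-technical}) is to first recast \eqref{eq-spectral-gap-bis} as an operator inequality $\langle A\xi,\xi\rangle\le\inf_\lambda\|(\lambda S-1)\xi\|^2$ with $S=\Delta_\varphi^{1/2}$, and then prove abstractly that for \emph{any} closed positive $T$ one has $\langle(A\otimes1)\eta,\eta\rangle\le\|(S\otimes T-1)\eta\|^2$. The proof of that lemma fibers not over an ONB of $\rL^2(N)$ but over the spectrum of $T$ (via a MASA containing $T$): in each fiber $T$ acts as a scalar $f(x)$, and one simply chooses $\lambda=f(x)$ in the hypothesis. Applying this with $T=\lambda\Delta_\psi^{1/2}$ (or more generally $\Delta_{\beta(\psi),\psi}^{1/2}$) and then taking the infimum over $\lambda$ gives exactly \eqref{eq-cor-spectral-gap}, which is the paper's version of your $(\dagger)$ with $z$ rather than $z-\rE(z)$ in the modular term; your $(\dagger)$ then follows by substituting $z=x-\rE(x)$, since $\rE(x)\in N$ commutes with each $a_j\otimes1$.

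\textbf{The modular term is simpler than you make it.} Once you have the tensorized inequality with $x_i$ (not $z_i$) in the modular term, take $\lambda=1$: since $(x_i)$ is a bounded centralizing net, $\|x_i\xi_\omega-\xi_\omega x_i\|\to 0$ is immediate (this is the standard equivalence between $\|x_i\phi-\phi x_i\|\to0$ for all $\phi\in\mathcal M_*$ and $\|x_i\xi-\xi x_i\|\to0$ for all $\xi\in\rL^2(\mathcal M)$). Your spectral-measure argument via (i) and (ii) is essentially a roundabout proof of this fact, and as written the step ``(i) and (ii) $\Rightarrow\|(1-\Delta_\omega^{1/2})\eta_i\|\to0$'' is not quite justified (you need $\int|e^s-1|\,d\mu_i\to0$, not just $\int(e^s-1)\,d\mu_i\to0$).

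\textbf{The $\Out$ statement.} Your sketch is too compressed and does not reflect what actually happens. The paper's argument is genuinely two-step: first, a contradiction argument using the \emph{second} tensorized inequality \eqref{eq-cor-spectral-gap-out} (the one that kills $z$ entirely when $\alpha\notin\pi_M^{-1}(\mathcal V)$) applied to $z=u_i$ shows $\pi_M(\alpha_i)\to1$. Only then can one reduce to $\id_M\otimes\beta_i$; at that point one applies the \emph{first} tensorized inequality \eqref{eq-cor-spectral-gap} to the approximating unitaries $v_i$ to see that $v_i-\rE_\varphi(v_i)\to0$, and then one has to massage $\rE_\varphi(v_i)\in\Ball(N)$ into genuine unitaries $u_i\in\mathcal U(N)$ witnessing $\Ad(u_i)^{-1}\circ\beta_i\to\id_N$ (this last step uses a perturbation lemma and a polar-decomposition argument). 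There is no single inequality that simultaneously controls both $\alpha$ and $\beta$; the $N$-direction requires this extra extraction step.
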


Theorem \ref{main-thm-full-tensor-products} above, applied to the case where $N$ itself is a full factor, provides a positive answer to questions (Q$1$) and (Q$2$).

\begin{lettercor}\label{cor-full-tensor-products}
Let $M_1$ and $M_2$ be any full factors. Then $M_1 \ovt M_2$ is a full factor.

Moreover, for any net $(t_i)_{i \in I}$ in $\R$, we have
$$t_i \to 0 \text{ w.r.t. } \tau(M_1 \ovt M_2) \quad \text{if and only if} \quad  \; t_i \to 0 \text{ w.r.t. } \tau(M_1) \text{ and } \tau(M_2).$$

\end{lettercor}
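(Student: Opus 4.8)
The plan is to derive Corollary~\ref{cor-full-tensor-products} directly from Theorem~\ref{main-thm-full-tensor-products}. First I would prove fullness of $M_1 \ovt M_2$. Let $(x_i)_{i \in I}$ be a uniformly bounded centralizing net in $M_1 \ovt M_2$. Applying Theorem~\ref{main-thm-full-tensor-products} with $M = M_1$ and $N = M_2$, we obtain a uniformly bounded centralizing net $(y_i)_{i \in I}$ in $M_2$ with $x_i - y_i \to 0$ strongly. Since $M_2$ is itself a full factor, the centralizing net $(y_i)$ in $M_2$ is trivial, i.e. there is a bounded net $(\lambda_i)$ in $\C$ with $y_i - \lambda_i 1 \to 0$ strongly; hence $x_i - \lambda_i 1 \to 0$ strongly, so $(x_i)$ is trivial. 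Thus $M_1 \ovt M_2$ is full. (That $M_1 \ovt M_2$ is a factor is standard, as the tensor product of two factors is a factor; or it follows a posteriori since a full von Neumann algebra with trivial centralizing nets is in particular a factor.)

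Next I would prove the statement about $\tau$. Recall that $\tau(M_1 \ovt M_2)$ is the weakest topology on $\R$ making the modular homomorphism $\delta\colon \R \to \Out(M_1 \ovt M_2)$ continuous, and similarly for $\tau(M_j)$ with $\delta_j \colon \R \to \Out(M_j)$. The key point is that for a tensor product, the modular automorphism group factors: for a product state $\varphi_1 \otimes \varphi_2$ one has $\sigma^{\varphi_1 \otimes \varphi_2}_t = \sigma^{\varphi_1}_t \otimes \sigma^{\varphi_2}_t$, so under the embedding $\Out(M_1) \times \Out(M_2) \to \Out(M_1 \ovt M_2)$ the homomorphism $\delta$ is carried to $(\delta_1, \delta_2)$. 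Now invoke the second assertion of Theorem~\ref{main-thm-full-tensor-products}: this map is a homeomorphism onto its range. Therefore a net $(t_i)$ in $\R$ satisfies $\delta(t_i) \to e$ in $\Out(M_1 \ovt M_2)$ if and only if $(\delta_1(t_i), \delta_2(t_i)) \to (e,e)$ in $\Out(M_1) \times \Out(M_2)$, i.e. if and only if $\delta_1(t_i) \to e$ and $\delta_2(t_i) \to e$. Translating back through the definition of the $\tau$-topology as the initial topology induced by $\delta$ (resp.\ $\delta_1, \delta_2$), this says precisely that $t_i \to 0$ with respect to $\tau(M_1 \ovt M_2)$ if and only if $t_i \to 0$ with respect to both $\tau(M_1)$ and $\tau(M_2)$.

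The only mild subtlety, and the place I would be most careful, is the bookkeeping between nets and topologies: the $\tau$-invariant is a topology on $\R$ defined via an initial (pullback) topology, so one should check that ``$\delta(t_i) \to e$'' is indeed equivalent to ``$t_i \to 0$ in $\tau(M_1 \ovt M_2)$'' — this holds because $\Inn(M_1 \ovt M_2)$ is closed (as $M_1 \ovt M_2$ is full), so $\Out$ is a topological group and the initial topology is well-behaved, and convergence in an initial topology is tested exactly by the defining map. Once this identification is in place, and using that the homeomorphism of Theorem~\ref{main-thm-full-tensor-products} intertwines $\delta$ with $(\delta_1,\delta_2)$, the equivalence of the two convergence conditions is immediate. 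There is no real obstacle here beyond correctly quoting the definitions; all the analytic content has already been absorbed into Theorem~\ref{main-thm-full-tensor-products}.
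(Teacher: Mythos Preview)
Your proof is correct and follows essentially the same approach as the paper's own proof: both parts are derived directly from Theorem~\ref{main-thm-full-tensor-products}, using the factorization $\sigma_t^{\varphi_1 \otimes \varphi_2} = \sigma_t^{\varphi_1} \otimes \sigma_t^{\varphi_2}$ together with the fact that $\Out(M_1) \times \Out(M_2) \to \Out(M_1 \ovt M_2)$ is a homeomorphism onto its range. The only cosmetic difference is that the paper works with faithful normal semifinite weights rather than states (to avoid assuming $\sigma$-finiteness), but the argument is otherwise identical.
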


As we mentioned before, the inequality \eqref{eq-intro-2} obtained in Theorem \ref{strong-gap} provides an analogue of Connes' spectral gap theorem for full factors of type $\III$. Although \eqref{eq-intro-2} contains the additional term $\inf_{ \lambda \in \R_+} \| x \xi_\varphi -\lambda \xi_\varphi x \|^{2}$, we conjecture that it can be removed, to obtain the following optimal spectral gap property.

\begin{letterconj}[Strong fullness conjecture] \label{conj-gap}
Let $M$ be any $\sigma$-finite full factor of type ${\rm III}$. Then there exist a faithful state $\varphi \in M_\ast$, some constant $\kappa > 0$ and a family $a_1,\dots,a_k \in M$ satisfying $a_j \varphi a_j^* \leq \varphi$ for every $j \in \{ 1, \dots, k \}$ such that
\begin{equation*}
\forall x \in M, \quad \| x -\varphi(x) 1 \|_\varphi^2 \leq \kappa \sum_{j = 1}^k \| x a_j - a_j x \|_\varphi^2.
\end{equation*}
\end{letterconj}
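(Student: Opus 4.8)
\emph{Towards a proof of Conjecture \ref{conj-gap}.} The natural point of departure is inequality \eqref{eq-intro-2} of Theorem \ref{strong-gap}: what the conjecture asks is that the extra term $T(x) := \inf_{\lambda \in \R_+}\|x\xi_\varphi - \lambda\xi_\varphi x\|^2$ can be absorbed into the right hand side, at the cost of enlarging the family $a_1,\dots,a_k$. The first step is to recognise $T$ as a modular quantity. Since $S_\varphi = J_\varphi\Delta_\varphi^{1/2}$ and $J_\varphi\xi_\varphi = \xi_\varphi$, one has $\xi_\varphi x = J_\varphi x^*\xi_\varphi = \Delta_\varphi^{1/2}(x\xi_\varphi)$ for every $x \in M$, hence
\begin{equation*}
T(x) = \inf_{\lambda \in \R_+}\bigl\| (1 - \lambda\Delta_\varphi^{1/2})\, x\xi_\varphi \bigr\|^2 ,
\end{equation*}
so that $T(x)$ is small precisely when $x\xi_\varphi$ lies close to a point eigenspace of the modular operator $\Delta_\varphi$, i.e.\ when $x$ is approximately an eigenoperator of $\sigma^\varphi$. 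Granting \eqref{eq-intro-2} and a routine absorption argument (bounding $T(x) \le \varepsilon\,\|x-\varphi(x)1\|_\varphi^2 + C\sum_i\|[x,b_i]\|_\varphi^2$ with $\varepsilon$ small enough depending on $\kappa$), Conjecture \ref{conj-gap} would reduce to the following statement: \emph{in a full factor of type ${\rm III}$, a uniformly bounded element that is approximately central for $\|\cdot\|_\varphi$ and whose GNS vector is approximately a modular eigenvector must be approximately scalar.}

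The second step is to argue by contradiction in the Ocneanu ultrapower $(M^\omega,\varphi^\omega)$. Were the statement false, a standard ultrafilter argument over the finite subsets of $M$ (and over $\R_+$) would produce a \emph{bounded} sequence $(x_n)$ with $\varphi(x_n) = 0$, $\|x_n\|_\varphi = 1$, $\|[x_n,a]\|_\varphi \to 0$ for every $a \in M$, and $\|x_n\xi_\varphi - \lambda_n\xi_\varphi x_n\| \to 0$ with $\lambda_n \to \lambda \in [0,+\infty]$ along $\omega$; already the reduction to uniformly bounded $(x_n)$ and the passage from one-sided to two-sided almost centrality are nontrivial and would be handled as in the type ${\rm II}_1$ theory and as in the proof of \eqref{eq-intro-2}. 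The degenerate cases $\lambda \in \{0,+\infty\}$ force $\varphi(x_nx_n^*)$ and $\varphi(x_n^*x_n)$ to be wildly incomparable, which contradicts approximate centrality of a bounded sequence in a factor. The essential case is $\lambda \in (0,+\infty)$ with $\lambda \ne 1$, where $x := (x_n)^\omega$ is a nonzero element of $M' \cap M^\omega$ lying in the Connes spectral subspace of $\sigma^{\varphi^\omega}$ for the eigenvalue $\lambda$, and the task is to show that no such element exists.

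Excluding such an element is the crux, and it is the step where I expect a genuinely new idea to be required. The natural tool is the continuous core $\widetilde M = M\rtimes_{\sigma^\varphi}\R$, a type ${\rm II}_\infty$ von Neumann algebra with canonical trace and trace-scaling dual flow $\hat\sigma$ satisfying $M = \widetilde M^{\hat\sigma}$: an approximately central modular eigenoperator of eigenvalue $\lambda$ should correspond to an approximately central sequence in $\widetilde M$ intertwining $\hat\sigma$ with a fixed, nonzero amount of trace scaling, and one would hope to contradict this using the way fullness of $M$ constrains $\widetilde M$, in the spirit of \cite[Theorem A]{Ma16}. The obstacle is that when the modular spectrum of $M$ is not almost periodic — for instance for a free Araki--Woods factor of type ${\rm III}_1$ with no almost periodic state — there is no visible mechanism forbidding such approximate eigenoperators, whereas when $M$ admits an almost periodic state $\Delta_\varphi$ has pure point spectrum and one can argue eigenspace by eigenspace, which is exactly why the almost periodic case of the conjecture is settled unconditionally in this paper. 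A realistic weaker target, short of the full conjecture, is the case where $M$ possesses a faithful state $\varphi$ with factorial (or full) centralizer $M_\varphi$, where factoriality of $M_\varphi$ can be used to promote one-sided approximate centrality to the form needed to rule out the offending spectral subspace.
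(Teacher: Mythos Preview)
The statement under discussion is a \emph{conjecture} in the paper, not a theorem: the authors do not prove it in general, but only in the almost periodic case (Theorem~\ref{main-thm-almost-periodic}) and note a few other special classes where it is known. Your proposal is, appropriately, a discussion of possible strategy rather than a claimed proof, and your final paragraph correctly identifies the almost periodic case as the one that goes through and correctly locates the obstruction in the continuous modular spectrum. That part is in line with the paper.

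The reduction in your first paragraph, however, is circular. You claim that via an absorption argument the conjecture reduces to the implication ``approximately central $+$ approximately a modular eigenvector $\Rightarrow$ approximately scalar''. But that implication is exactly what \eqref{eq-spectral-gap-bis} already says: it is Theorem~\ref{strong-gap}, not a reduction of the conjecture. Concretely, your absorption step asks for
\[
T(x)\;\le\;\varepsilon\,\|x-\varphi(x)1\|_\varphi^2 \;+\; C\sum_i\|[x,b_i]\|_\varphi^2,
\]
and negating \emph{this} yields a sequence with $T(x_n)$ bounded \emph{below} while the commutators vanish --- i.e.\ a sequence that is approximately central and \emph{not} an approximate eigenvector, the opposite of what you feed into paragraph~2. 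The same conclusion is reached by negating the conjecture directly and invoking \eqref{eq-spectral-gap-bis}: one gets $T(x_n)\ge 1/\kappa - o(1)$, not $T(x_n)\to 0$.

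There is a second, more structural point. Suppose you did manage to produce a uniformly bounded sequence $(x_n)$ with $\varphi(x_n)=0$, $\|x_n\|_\varphi=1$, and $(x_n)^\omega\in M'\cap M^\omega$. Then Corollary~\ref{central-sequence} already gives $(x_n)^\omega\in\C 1$, a contradiction, with no eigenvalue analysis whatsoever. So the spectral--subspace discussion in your paragraphs~2--3 is not the crux. The genuine obstacles are precisely the two you flag but then set aside as ``handled as in the type ${\rm II}_1$ theory'': the counterexample sequence furnished by negating the conjecture is a priori \emph{not} uniformly bounded in operator norm, and the one-sided condition $\|[x_n,a]\|_\varphi\to 0$ for $a$ with $a\varphi a^*\le\varphi$ does \emph{not} yield an element of $M'\cap M^\omega$ in the Ocneanu sense. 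These are exactly the type~${\rm III}$ difficulties the conjecture is trying to overcome, and they are not resolved by the methods available in the paper outside the almost periodic case.
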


As we show in Proposition \ref{prop-conjecture}, a consequence of this conjecture would be that a factor $M$ is full if and only if $\mathbf{K}(\rL^{2}(M)) \subset \rC^*(M, JMJ)$, as in \cite{Co75b}. By \cite[Theorem 11]{Ba93}, the conjecture holds for every free product factor $(M, \varphi) = (M_1, \varphi_1) \ast (M_2, \varphi_2)$ for which there exist $u, v \in \mathcal U((M_1)_{\varphi_1})$ and $w \in \mathcal U((M_2)_{\varphi_2})$ such that $\varphi_1(u) = \varphi_1(v) = \varphi_1(u^*v) = \varphi_2(w) = 0$. By \cite[Lemma A.1]{Va04}, it holds for every free Araki--Woods factor \cite{Sh96}. By the proof of \cite[Lemma 2.7]{VV14}, it also holds for plain Bernoulli crossed products of non-amenable groups.

Our next main result shows that, besides the aforementioned particular classes, the conjecture holds for {\em all} full factors of type ${\rm III}$ possessing an {\em almost periodic} state. In this case, we can even choose the family $a_1,\dots,a_k$ in the centralizer $M_\varphi$ of any almost periodic state $\varphi$ for which $M_\vphi$ is a factor.

\begin{letterthm}\label{main-thm-almost-periodic} 
Let $M$ be any full factor with separable predual which possesses an almost periodic faithful normal state. Then for every almost periodic faithful normal state $\varphi \in M_\ast$ whose centralizer $M_\varphi$ is a factor, there exist a constant $\kappa > 0$ and a family $a_1, \dots, a_k \in M_\varphi$ such that 
$$\forall x \in M, \quad \|x - \varphi(x)1\|_\varphi^2 \leq \kappa \sum_{j = 1}^k \|x a_j - a_j x\|_\varphi^2.$$
\end{letterthm}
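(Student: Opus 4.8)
The plan is to decompose $M$ along the eigenspaces of the modular operator $\Delta_\varphi$, reduce the desired inequality to one ``diagonal'' estimate and a family of ``off‑diagonal'' estimates, and feed the fullness of $M$ (in the form of Theorem~\ref{strong-gap}) into each of them. Write $P:=M_\varphi$, a $\II_1$ factor — the case $\dim P<\infty$ is trivial — with faithful normal trace $\tau:=\varphi|_P$; let $\Gamma\le\R_+^*$ be the countable group generated by the point spectrum of $\Delta_\varphi$, and for $\gamma\in\Gamma$ set $M^\gamma:=\{x\in M:\sigma^\varphi_t(x)=\gamma^{it}x\ \forall t\}$, so that $M^1=P$, $M^\gamma M^\delta\subseteq M^{\gamma\delta}$ and $(M^\gamma)^*=M^{\gamma^{-1}}$. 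Almost periodicity of $\varphi$ gives the orthogonal decomposition $\rL^2(M)=\bigoplus_{\gamma\in\Gamma}^{\perp}\overline{M^\gamma\xi_\varphi}$, hence a Fourier expansion $x=\sum_\gamma x_\gamma$, convergent in $\|\cdot\|_\varphi$, with $x_\gamma\in M^\gamma$, $\|x_\gamma\|\le\|x\|$, $x_1=E_P(x)$ and $\|x\|_\varphi^2=\sum_\gamma\|x_\gamma\|_\varphi^2$. Since for $a\in P$ one has $x_\gamma a-ax_\gamma\in M^\gamma$ and the $M^\gamma\xi_\varphi$ are mutually $\varphi$‑orthogonal, we get $\|xa-ax\|_\varphi^2=\sum_\gamma\|x_\gamma a-ax_\gamma\|_\varphi^2$ and $\|x-\varphi(x)1\|_\varphi^2=\|x_1-\tau(x_1)1\|_2^2+\sum_{\gamma\ne1}\|x_\gamma\|_\varphi^2$. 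So it suffices to produce a single finite family $a_1,\dots,a_k\in P$ and $\kappa>0$ with
\[
\|y-\tau(y)1\|_2^2\le\kappa\sum_j\|ya_j-a_jy\|_2^2\quad(y\in P),\qquad \|x\|_\varphi^2\le\kappa\sum_j\|xa_j-a_jx\|_\varphi^2\quad(x\in M^\gamma,\ \gamma\in\Gamma\setminus\{1\}),
\]
the second estimate being uniform in $\gamma$; summing the first for $y=x_1$ and the second over $\gamma\ne1$ then yields the theorem (one may of course use two finite families and take their union).

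Next I would rewrite the off‑diagonal estimate. Fix $\gamma\in\Gamma$, say $\gamma<1$ (the case $\gamma>1$ being symmetric, via adjoints, with coisometries); using factoriality of $P$, a maximality argument yields an isometry $w_\gamma\in M^\gamma$ with $w_\gamma^*w_\gamma=1$, $p_\gamma:=w_\gamma w_\gamma^*\in P$ and $\tau(p_\gamma)=\gamma$, and one sets $\theta_\gamma:=\Ad(w_\gamma)\colon P\xrightarrow{\ \sim\ }p_\gamma Pp_\gamma$. Every $x\in M^\gamma$ is $x=bw_\gamma$ with $b:=xw_\gamma^*\in Pp_\gamma$, and the KMS relation around $w_\gamma$ gives $\|x\|_\varphi^2=\gamma^{-1}\|b\|_2^2$, $\|xa-ax\|_\varphi^2=\gamma^{-1}\|b\,\theta_\gamma(a)-ab\|_2^2$ for $a\in P$, and $\|x^*\|_\varphi^2=\|b\|_2^2$. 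Crucially $\xi_\varphi x=\gamma^{1/2}x\xi_\varphi$ (right action), so the extra term $\inf_{\lambda\in\R_+}\|x\xi_\varphi-\lambda\xi_\varphi x\|^2$ in \eqref{eq-intro-2} vanishes on $M^\gamma$ (take $\lambda=\gamma^{-1/2}$), as it does on $P=M^1$ (where $\xi_\varphi a=a\xi_\varphi$). Thus the off‑diagonal estimate is equivalent to a \emph{uniform} inequality
\[
\|b\|_2^2\le\kappa\sum_j\|b\,\theta_\gamma(a_j)-a_jb\|_2^2\qquad(\gamma\in\Gamma\setminus\{1\},\ b\in Pp_\gamma),
\]
i.e.\ to the statement that the partial automorphisms $(\theta_\gamma)_{\gamma\ne1}$ of $P$ are uniformly properly outer with a common spectral gap.

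To obtain this I would apply Theorem~\ref{strong-gap} with respect to $\varphi$ — legitimate because $M$ carries an almost periodic state (transferring the gap provided by that theorem across a bounded perturbation of the state, if necessary) — to the elements $x=bw_\gamma$, for which the extra term is zero: this gives $\|x\|_\varphi^2\le\kappa\sum_j\|xc_j-c_jx\|_\varphi^2$ for a fixed finite family $c_1,\dots,c_m\in M$. Expanding $\|xc_j-c_jx\|_\varphi^2=\sum_\delta\|x(c_j)_\delta-(c_j)_\delta x\|_\varphi^2$ and performing the $w$‑manipulation above, the right‑hand side becomes a sum of $\theta$‑twisted commutators of $b$ with the finitely many Fourier coefficients of the $c_j$ lying in corners of $P$; replacing these — using that $P$ is a factor and that corners of $P$ carry a unique trace — by honest commutators with a fixed finite family in $P$ yields the off‑diagonal estimate. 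The diagonal estimate is the analogous (and softer) fact that $P=M_\varphi$ is a full $\II_1$ factor, which follows from fullness of $M$ by the same mechanism (equivalently, from the structure of the discrete core of $(M,\varphi)$, which $M$ being full forces to be a full semifinite factor containing $P$, cf.\ \cite{Ma16}); one then invokes Connes' spectral gap theorem \eqref{eq-intro-1} for $P$ and combines the two finite families.

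The main obstacle is precisely the last step of the off‑diagonal argument: carrying out this reduction \emph{uniformly over all} $\gamma\in\Gamma$ with one finite family in $P$ and one constant. In the type $\III_1$ case $\Gamma$ is dense in $\R_+^*$, and one must absorb simultaneously the $\gamma$‑dependence of the corners $p_\gamma$, of the scalars $\gamma^{\pm1/2}$ and of the twists $\theta_\gamma$; moreover one has to restore uniform operator‑norm bounds after the truncations that are unavoidable when passing from an $\rL^2$‑statement to an application of fullness of $M$. This is where the $\Gamma$‑equivariance of the whole picture must be exploited — the $w_\gamma$ transform naturally under the dual $\Gamma$‑action on the discrete core, and $\theta_\gamma\theta_\delta$ agrees with $\theta_{\gamma\delta}$ up to an inner perturbation — and it constitutes the technical heart of the proof; the verification that $P$ is full is a comparatively routine by‑product of the same circle of ideas.
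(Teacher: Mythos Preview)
Your Fourier decomposition along the eigenspaces $M^\gamma$ is correct, and the observation that the modular term $\inf_\lambda\|x\xi_\varphi-\lambda\xi_\varphi x\|^2$ vanishes on each $M^\gamma$ is the right entry point. But there are two real gaps. First, Theorem~\ref{strong-gap} does not let you prescribe the state: the $\varphi$ there is the one produced by Theorem~\ref{gap}, and nothing guarantees it equals your chosen almost periodic state (your ``bounded perturbation'' aside does not help --- two faithful normal states on a type~$\III$ factor need not give comparable $\rL^2$-norms). Second, and this is the heart of the matter, the step where you ``replace'' the family $c_1,\dots,c_m\in M$ coming from Theorem~\ref{strong-gap} by a finite family in $P=M_\varphi$, \emph{uniformly in $\gamma$}, is precisely the content of the theorem and you do not carry it out. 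Each $c_j$ may have infinitely many nonzero Fourier coefficients $(c_j)_\delta\in M^\delta$; truncating the sum over $\delta$ only weakens the upper bound you need, and there is no general device for turning a commutator with an element of $M^\delta$ into one with an element of $P$. You yourself flag this as ``the main obstacle'' and ``the technical heart'', but invoking ``$\Gamma$-equivariance'' is a hope, not an argument.

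The paper avoids both obstacles by working in the \emph{semifinite} core. One passes to $\mathcal M=M\ovt F$ (with $F$ of type~$\mathrm I_\infty$), sets $\psi=\varphi\otimes\mathrm{Tr}_F$, and uses the discrete decomposition $\mathcal M\cong N\rtimes_\theta\Gamma$ where $N=\mathcal M_\psi$ is a full $\II_\infty$ factor (by \cite{TU14}) with $pNp=M_\varphi$. The key new ingredient is Theorem~\ref{thm-crossed-product}: the image of $\theta(\Gamma)$ in $\Out(N)$ is \emph{discrete}. This single topological statement \emph{is} your uniform off-diagonal estimate, reformulated at the level of $\Out(N)$; it is proved by contradiction, showing that a sequence $\gamma_n\neq 1$ with $\pi_N(\theta_{\gamma_n})\to 1$ would produce a nontrivial centralizing sequence in $M$, against fullness. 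Once discreteness holds, Theorem~\ref{crossed-product-finite} --- the semifinite crossed-product spectral gap, which carries \emph{no} modular correction term --- directly furnishes $\kappa>0$ and $a_1,\dots,a_k\in pNp=M_\varphi$ satisfying the desired inequality on all of $\mathcal M$, hence on $M=p\mathcal M p$. In this route the state on $N$ is canonical (the restricted trace), so your first obstacle evaporates, and the uniformity over $\gamma$ becomes a single clean statement in $\Out(N)$ rather than an infinite family of estimates to be assembled by hand.
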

The proof of Theorem \ref{main-thm-almost-periodic} is based on a semifinite version of \cite[Corollary 5]{Jo81} (see Theorem \ref{crossed-product-finite}) applied to the discrete decomposition $M = N \rtimes \Gamma$, where $N$ is the discrete core of $M$ and $\Gamma = \Sd(M)$.  For this, we exploit the fact that the discrete core $N$ is full (see \cite[Proposition 5]{TU14}) and we prove that the image of $\Gamma$ in $\Out(N)$ is discrete (see Theorem \ref{thm-crossed-product}).

Finally, our last main result is an application of Theorem \ref{main-thm-full-tensor-products} to obtain a \emph{unique McDuff decomposition} result. Following \cite{McD69, Co75a}, we say that a factor $\mathcal{M}$ with separable predual is {\em McDuff} if it absorbs tensorially the hyperfinite type ${\rm II_1}$ factor $R$, that is, $\mathcal M \cong \mathcal M \ovt R$. We introduce the following terminology.

\begin{definition}
Let $\mathcal M$ be any McDuff factor with separable predual. 

\begin{itemize}

\item We say that $\mathcal M$ admits a {\em McDuff decomposition} if there exist a non-McDuff factor $M$ and a non-type ${\rm I}$ amenable factor $P$ such that $\mathcal  M = M \ovt P$.

\item We say that $\mathcal M$ has a {\em unique} McDuff decomposition if the above decomposition $\mathcal{M}=M \ovt P$ is unique up to {\em stable unitary conjugacy} (see Definition \ref{stable_conj}).

\end{itemize} 
\end{definition}

The class of McDuff factors with separable predual that admit a unique McDuff decomposition is well understood in the type $\II_1$ case. Indeed, by a theorem of Popa (see \cite[Theorem 5.1]{Po06}), if $M$ is a full factor of type $\mathrm{II}_1$ with separable predual and $R$ is the hyperfinite type $\mathrm{II}_1$ factor, then the tensor product factor $M \ovt R$ has a unique McDuff decomposition. Conversely, by a theorem of Hoff (see \cite[Theorem B]{Ho15}), if $\mathcal M$ is a McDuff factor of type $\II_1$ with separable predual and with a unique McDuff decomposition $\mathcal M = M \ovt R$, then its non-McDuff part $M$ must be full. 

We extend this characterization of McDuff factors with a unique McDuff decomposition to type ${\rm III}$ factors.

\begin{letterthm}\label{main-thm-mcduff}
Let $\mathcal{M}$ be any McDuff factor with separable predual. The following conditions are equivalent:
\begin{itemize}

\item [$(\rm i)$] $\mathcal{M}=M \ovt P$, where $M$ is a full factor and $P$ is a non-type ${\rm I}$ amenable factor. 

\item [$(\rm ii)$] $\mathcal{M}$ has a unique McDuff decomposition.

\end{itemize}

\end{letterthm}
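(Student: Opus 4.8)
The plan is to prove the two implications separately, following the type $\mathrm{II}_1$ blueprint — Popa's uniqueness theorem \cite[Theorem 5.1]{Po06} for $(\mathrm i) \Rightarrow (\mathrm{ii})$ and Hoff's converse \cite[Theorem B]{Ho15} for $(\mathrm{ii}) \Rightarrow (\mathrm i)$ — with Theorem \ref{main-thm-full-tensor-products} playing the role of Connes' spectral gap theorem. Throughout I use that every non-type $\mathrm I$ amenable factor with separable predual is McDuff (Connes--Haagerup classification): thus in any McDuff decomposition $\mathcal M = M \ovt P$ one has $P = P_0 \ovt R$, so $1 \ovt P$ contains a copy of $R$ approximately central in $\mathcal M$; and since a full factor is never McDuff, a decomposition as in $(\mathrm i)$ is itself a McDuff decomposition, so the content of $(\mathrm i)\Rightarrow(\mathrm{ii})$ is that any other McDuff decomposition is stably unitarily conjugate to it.

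For $(\mathrm i) \Rightarrow (\mathrm{ii})$, fix $\mathcal M = M \ovt P$ with $M$ full and let $\mathcal M = M' \ovt P'$ be an arbitrary McDuff decomposition. The first step is to show that the amenable tensor factor is essentially unique: since $M$ is full, Theorem \ref{main-thm-full-tensor-products} forces every centralizing net of $\mathcal M = M \ovt P$ to lie asymptotically in $1 \ovt P$, and feeding the amenability of $P'$ into this rigidity one runs Popa's intertwining-by-bimodules — in the form available for inclusions of $\sigma$-finite von Neumann algebras with faithful normal conditional expectation (passing to continuous cores if convenient) — to obtain $1 \ovt P' \prec_{\mathcal M} 1 \ovt P$. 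A now-standard corner-and-commutant manipulation (as in \cite[\S 5]{Po06} and \cite[Theorem B]{Ho15}), using that $1 \ovt P$ and $1 \ovt P'$ each generate $\mathcal M$ together with their relative commutants $M \ovt 1$ and $M' \ovt 1$, then promotes this to the existence of $n \in \N$ (or an amplification by $\B(\ell^2)$) and a unitary $u$ with $u\,(1 \ovt P' \ovt 1)\,u^{*} = 1 \ovt P \ovt 1$; taking relative commutants gives $u\,(M' \ovt 1 \ovt 1)\,u^{*} = M \ovt 1 \ovt 1$, so in particular $M' \cong M$ and $P' \cong P$ stably, and the two McDuff decompositions are stably unitarily conjugate in the sense of Definition \ref{stable_conj}.

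For $(\mathrm{ii}) \Rightarrow (\mathrm i)$, I argue by contraposition: assume $\mathcal M = M \ovt P$ is a McDuff decomposition in which $M$ is not full, and produce a second McDuff decomposition that is not stably unitarily conjugate to it. As $M$ is not full it carries a nontrivial centralizing net of unitaries $(v_n)$, which after functional calculus and a diagonal extraction we may assume generates, in the ultrapower, a diffuse approximately central subalgebra supported in $M$. Writing $P = P_0 \ovt R_1$, so $\mathcal M = (M \ovt P_0) \ovt R_1$, I would use $(v_n)$ to twist the standard copy $R_1 = \bigovt_k M_2(\C)$: replace its $k$-th slot by $\Ad(w_k)(M_2(\C))$ for unitaries $w_k$ built from the $v_n$ and the earlier slots, chosen so that the resulting $R_1'' \cong R$ stays approximately central and globally complemented, yielding a new McDuff decomposition $\mathcal M = M'' \ovt R_1''$ in which $R_1''$ has ``absorbed'' the central net of $M$. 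One then checks that no stable unitary can undo the twist: doing so would push the approximately central $(v_n) \subseteq M$ into the amenable part, which is impossible when $M$ is not full. Hence $\mathcal M$ has at least two inequivalent McDuff decompositions.

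The step I expect to be the main obstacle is inside $(\mathrm i) \Rightarrow (\mathrm{ii})$: upgrading what Theorem \ref{main-thm-full-tensor-products} directly supplies — that the approximately central part of $P'$ sits in $1 \ovt P$ — first to the full intertwining $1 \ovt P' \prec_{\mathcal M} 1 \ovt P$ of the amenable tensor factors, and then to a stable unitary conjugacy, all in the properly infinite / type $\mathrm{III}$ setting, where Popa's finite-index and $\rL^2$-rigidity tools must be replaced by their with-expectation and continuous-core counterparts. A secondary difficulty, in $(\mathrm{ii}) \Rightarrow (\mathrm i)$, is to extract an honest invariant of a McDuff decomposition — morally the position of the non-McDuff part relative to the asymptotic centralizer of $\mathcal M$ — and to verify that the twisted decomposition provably changes it.
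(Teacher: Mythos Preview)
Your overall blueprint matches the paper's --- Popa-style intertwining for $(\mathrm i)\Rightarrow(\mathrm{ii})$ and a Hoff-type twisting construction for $(\mathrm{ii})\Rightarrow(\mathrm i)$ --- and you have correctly located the hard step. But the mechanism you propose for that step does not work as stated.

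In $(\mathrm i)\Rightarrow(\mathrm{ii})$, the sentence ``feeding the amenability of $P'$ into this rigidity one runs Popa's intertwining-by-bimodules to obtain $1\ovt P' \preceq_{\mathcal M} 1\ovt P$'' is the gap. Theorem \ref{main-thm-full-tensor-products} (in its ultraproduct form, Theorem \ref{ultraproduct_full}) only controls \emph{centralizing} elements of $\mathcal M$; amenability of $P'$ does not make arbitrary elements of $P'$ approximately central in $\mathcal M$, so there is no direct passage from this to $P' \preceq_{\mathcal M} P$, and ``passing to continuous cores'' does not supply one. The paper avoids this by aiming instead for $M \preceq_{\mathcal M} M_0$ (writing $\mathcal M = M_0 \ovt P_0$ for the second decomposition), which by Proposition \ref{intertwine_mcduff} already gives stable unitary conjugacy. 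The key maneuver, absent from your sketch, is: first reduce via tensoring with $R_\infty$ and Lemma \ref{lemma intertwining in tensor} to the case $P_0 \cong R_\infty$; then exploit an almost periodic state $\varphi$ on $R_\infty$ with irreducible centralizer and a finite-dimensional filtration $Q_n \subset P_0$ so that $(Q_n' \cap P_0)_\varphi$ is a \emph{tracial} subalgebra whose relative commutant in $\mathcal M$ is exactly $M_0 \ovt Q_n$. A contradiction argument using Theorem \ref{ultraproduct_full} forces $(Q_n' \cap P_0)_\varphi \preceq_{\mathcal M} P$ for some $n$, and then \cite[Lemma 4.9]{HI15} on commutants gives $M \preceq_{\mathcal M} M_0 \ovt Q_n$, hence $M \preceq_{\mathcal M} M_0$.

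For $(\mathrm{ii})\Rightarrow(\mathrm i)$ your idea is in the right spirit but misses the specific input that makes the non-conjugacy verifiable. The paper does not start from an arbitrary centralizing net in $M$: it invokes \cite[Theorem 3.1]{HU15} to obtain, inside the non-full $M$, mutually commuting projections $q_n$ in an abelian subalgebra with expectation, satisfying $q_n \to \tfrac12$ $\sigma$-weakly. Combined with matrix units $p_n,v_n$ of $R \subset P$, the commuting unitaries $w_n = 1 - 2p_nq_n$ yield $\Psi \in \Aut(\mathcal M)$ as a $u$-topology limit of $\Ad(w_{n_0}\cdots w_{n_k})$, and the exact identity $\Psi(v_{n_k}) = (1 - 2q_{n_k})v_{n_k}$ together with $q_{n_k} \to \tfrac12$ is what shows $\Psi(P) \not\preceq_{\mathcal M} P$. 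Your ``functional calculus and diagonal extraction'' does not produce this algebraic structure, and your proposed obstruction (``would push the approximately central $(v_n)\subset M$ into the amenable part'') is not the invariant actually used.
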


The proof of the implication $(\rm i) \Rightarrow (\rm ii)$ generalizes the proof of \cite[Theorem 5.1]{Po06} (see also \cite[Theorem 2.3]{Ho06}), which is based on Popa's deformation/rigidity argument and Connes' spectral gap characterization of full factors of type ${\rm II_1}$ as in \eqref{eq-intro-1}. In the type $\III$ case, we use instead the result obtained in Theorem \ref{ultraproduct_full}, and we exploit the recent generalization of Popa's intertwining theory to arbitrary von Neumann algebras (see \cite{HI15, BH16}) to prove the unique McDuff decomposition. A key part in our proof is to reduce to the case when $P = R_\infty$ is the unique amenable factor of type ${\rm III_1}$ and to exploit the fact that $R_\infty$ admits a faithful normal state whose centralizer is irreducible inside $R_\infty$. The proof of the implication $(\rm ii) \Rightarrow (\rm i)$ is an adaptation of the one of \cite[Theorem B]{Ho15} using \cite[Theorem 3.1]{HU15}.

\subsection*{Acknowledgments} We thank Hiroshi Ando, R\'emi Boutonnet and Yoshimichi Ueda for their useful remarks.

\tableofcontents

\section{Preliminaries} 
\subsection*{Basic notations}
Let $M$ be any von Neumann algebra. We denote by $M_\ast$ its predual, by $\mathcal U(M)$ its group of unitaries and by $\mathcal Z(M)$ its center. The uniform norm on $M$ is denoted by $\| \cdot \|_\infty$ and the unit ball of $M$ with respect to the uniform norm $\|\cdot\|_\infty$ is denoted by $\Ball(M)$. If $\varphi \in M_*^+$ is a positive functional, we put $\| x \|_\varphi=\varphi(x^*x)^{1/2}$ for all $x \in M$. 

\subsection*{Standard form}  Let $M$ be any von Neumann algebra. We denote by $(M, \rL^2(M), J, \rL^2(M)_+)$ the standard form of $M$. Recall that $\rL^2(M)$ is naturally endowed with the structure of a $M$-$M$-bimodule: we will simply write $x \xi y = x Jy^*J \xi$ for all $x, y \in M$ and all $\xi \in \rL^2(M)$. The vector $J \xi$ will be also simply denoted by $\xi^{*}$ so that $(x\xi)^{*}=\xi^{*}x^{*}$. For every $\xi \in \rL^2(M)$ we denote by $|\xi| \in \rL^2(M)_+$ its positive part. If $\xi \in \rL^2(M)_+$ we denote by $\xi^2 \in M_*^+$ the positive functional on $M$ defined by $x \mapsto \langle x \xi, \xi \rangle$. For any positive functional $\varphi \in M_\ast^+$, there exists a unique $\xi \in \rL^2(M)_+$ such that $\xi^2=\varphi$. We denote it by $\xi_\varphi \in \rL^2(M)_+$ (note that $\xi_\varphi$ is denoted $\varphi^{1/2}$ in \cite{Ma16}). We then have $\|x\|_\varphi = \|x \xi_\varphi\|$ for all $x \in M$. The following remark will be very useful.
\begin{rem}\label{rem-standard-form}
For every positive functional $\varphi \in M_\ast$, we have 
$$\left\{\eta \in \rL^2(M) : |\eta|^2 \leq \varphi \right\}  =  \Ball(M) \xi_\varphi$$
(see the discussion after \cite[Lemma 3.2]{Ma16} for further details). 
Moreover, by the polarization identity, we have that
$$\left\{\eta \in \rL^2(M) : \exists \lambda \in \R_+, \;  |\eta|^2 \leq \lambda\varphi  \text{ and }  |\eta^{*}|^2 \leq \lambda\varphi  \right\} $$ is linearly spanned by $\left\{\eta \in \rL^2(M)_+ : \eta^2 \leq \varphi \right\}$.
\end{rem}

\subsection*{Relative modular operators}
Take any pair of positive linear functionals $\varphi, \psi \in M_*^+$ with supports $p=\mathrm{supp}(\varphi)$ and $q=\mathrm{supp}(\psi)$. The antilinear operator densely defined on $\rL^2(M)p^{\perp} \oplus M \xi_\varphi \subset \rL^2(M)$ by the formula $\eta \oplus x\xi_\varphi \mapsto x^*\xi_\psi$ for $x \in Mp$ is closable and its closure $S$ has polar decomposition $S=J_{\psi,\vphi}\Delta^{1/2}_{\psi,\varphi}$, where $J_{\psi,\vphi}=JpJqJ$ and $\Delta_{\psi,\varphi}=S^*S$ is the \emph{relative modular operator} of $\psi$ with respect to $\varphi$. It is a closed positive densely defined operator on $\rL^2(M)$ supported on $qJpJ$. Moreover, $\Delta_{\psi,\varphi}^{1/2}$ is the closure of the closable operator densely defined on $\rL^2(M)p^{\perp} \oplus M\xi_\varphi \subset \rL^2(M)$ by $\eta \oplus x\xi_\varphi \mapsto \xi_\psi x $ for $x \in Mp$. When $\varphi=\psi$, we will simply denote $\Delta_{\varphi,\varphi}$ by $\Delta_\varphi$.

If $M$ and $N$ are two von Neumann algebras, then we have a natural identification $\rL^2(M \ovt N) = \rL^2(M) \ovt \rL^2(N)$ which identifies $\xi_{\varphi \otimes \psi}$ with $\xi_\varphi \otimes \xi_\psi$ for any pair of positive functionals $\varphi \in M_*^+$ and $\psi \in N_*^+$. Moreover, by this identification, we have $\Delta_{\varphi_2 \otimes \psi_2,\varphi_1 \otimes \psi_1}=\Delta_{\varphi_2,\varphi_1} \otimes \Delta_{\psi_2, \psi_1}$ as closed densely defined operators. Here, for any pair of closed densely defined operators $S,T$ on two Hilbert spaces $H$ and $K$ with domains $\mathcal{D}(S)$ and $\mathcal{D}(T)$, the operator $S \otimes T : \mathcal D(S \otimes T) \to H \otimes K$ denotes the closure of the closable densely defined operator $S \odot T : \mathcal D(S) \odot \mathcal D(T) \to H \otimes K : \xi \otimes \zeta \mapsto S(\xi) \otimes T(\zeta)$.

\subsection*{Topological groups associated to a von Neumann algebra}
A \emph{topological group} is a group $G$ equipped with a topology making the map 
$ (g,h) \in G \times G \mapsto gh^{-1}$ continuous. If $H$ is a normal subgroup of $G$, then  $G/H$ is a topological group with respect to the quotient topology (not necessarily Hausdorff). A topological group $G$ is said to be \emph{complete} if it is Hausdorff and complete with respect to the uniform structure generated by the following sets
$$ U_\mathcal{V}=\{ (g,h) \in G \times G \mid gh^{-1} \in \mathcal{V} \text{ and } g^{-1}h \in \mathcal{V} \}, $$
where $\mathcal{V}$ runs over the neighborhoods of $1$ in $G$. If $G$ is complete and $H$ is a subgroup of $G$, then $H$ is complete if and only if it is closed in $G$. If moreover $H$ is normal, then $G/H$ is also complete. We say that a complete topological group is Polish if it is separable and completely metrizable.

Let $M$ be a von Neumann algebra. Then the restriction of the strong topology and the $*$-strong topology coincide on $\mathcal{U}(M)$ and they turn $\mathcal{U}(M)$ into a complete topological group. If moreover $M_*$ has separable predual, then $\mathcal{U}(M)$ is Polish.

The group $\Aut(M)$ of all $\ast$-automorphisms of $M$ acts on $M_*$ by $\theta(\varphi)=\varphi \circ \theta^{-1}$ for all $\theta \in \Aut(M)$ and all $\varphi \in M_*$. Following \cite{Co74, Ha73}, the $u$-topology on $\Aut(M)$ is the topology of pointwise norm convergence on $M_*$, meaning that a net $(\theta_i)_{i \in I}$ in $\Aut(M)$ converges to the identity $\id_M$ in the $u$-topology if and only if for all $\varphi \in M_*$ we have $\| \theta_i(\varphi) -\varphi \| \to 0$ as $i \to \infty$. This turns $\Aut(M)$ into a complete topological group. When $M_*$ is separable, $\Aut(M)$ is Polish. Since the standard form of $M$ is unique, the group $\Aut(M)$ also acts naturally on $\rL^2(M)$ and we have $\theta(\xi_\varphi)=\xi_{\theta(\varphi)}$ for every $\varphi \in M_*^+$. Then the $u$-topology is also the topology of pointwise norm convergence on $\rL^2(M)$.

We denote by $\Ad : \mathcal{U}(M) \rightarrow \Aut(M)$ the continuous homomorphism which sends a unitary $u$ to the corresponding inner automorphism $\Ad(u)$. We denote by $\Inn(M)$ the image of $\Ad$. We denote by $\Out(M)=\Aut(M)/\Inn(M)$ the quotient group. Let $\pi_M : \Aut(M) \to \Out(M)$ be the quotient map. A net $(\theta_i)_{i \in I}$ in $\Aut(M)$ satisfies $\pi_M(\theta_i) \to 1$ in $\Out(M)$ as $i \to \infty$ if and only if there exists a net $(u_i)_{i \in I}$ in $\mathcal{U}(M)$ such that $\Ad(u_i) \circ \theta_i \to \id_M$ as $i \to \infty$. 

Note that a net of unitaries $(u_i)_{i \in I}$ in $\mathcal{U}(M)$ is centralizing in $M$ if and only if $\Ad(u_i) \to \id_M$ as $i \to \infty$. Hence, when $M$ is a full factor, the definitions imply that the map $\Ad : \mathcal{U}(M) \rightarrow \Aut(M)$ is open on its range. Therefore, $\Inn(M)$ is isomorphic as a topological group to the quotient $\mathcal{U}(M)/\{ z \in \C \mid |z|=1 \}$. In particular, $\Inn(M)$ is complete hence closed in $\Aut(M)$ and $\Out(M)$ is a complete topological group.

Finally, we recall the following useful fact (see e.g.\ \cite[Proposition 2.8]{Co74}).
\begin{rem}\label{rem-convergence}
Let $(\theta_i)_{i \in I}$ be any net in $\Aut(M)$ such that the following two properties are satisfied:
\begin{itemize}

\item There exists a strongly dense subset $S \subset M$ such that $\theta_i(x) \to x$ strongly as $i \to \infty$ for every $x \in S$.

\item There exists a faithful state $\varphi \in M_\ast$ such that $\lim_i\| \theta_i(\varphi) - \varphi\| = 0$.
\end{itemize}
Then $\theta_i \to \id_M$ in $\Aut(M)$ as $i \to \infty$. 
\end{rem}

\subsection*{Ultraproducts von Neumann algebras}
Let $M$ be any $\sigma$-finite von Neumann algebra. Let $I$ be any nonempty directed set and $\omega$ any {\em cofinal} ultrafilter on $I$, i.e.\ $\{i \in I : i \geq i_0\} \in \omega$ for every $i_0 \in I$. When $I = \N$, $\omega$ is cofinal if and only if $\omega$ is {\em nonprincipal}, i.e.\ $\omega \in \beta(\N) \setminus \N$. Define
\begin{align*}
\mathcal I_\omega(M) &= \left\{ (x_i)_i \in \ell^\infty(I, M) \mid x_i \to 0\ \ast\text{-strongly as } i \to \omega \right\} \\
\mathfrak M^\omega(M) &= \left \{ (x_i)_i \in \ell^\infty(I, M) \mid  (x_i)_i \, \mathcal I_\omega(M) \subset \mathcal I_\omega(M) \text{ and } \mathcal I_\omega(M) \, (x_i)_i \subset \mathcal I_\omega(M)\right\}.
\end{align*}
The multiplier algebra $\mathfrak M^\omega(M)$ is a $\rC^*$-algebra and $\mathcal I_\omega(M) \subset \mathfrak M^\omega(M)$ is a norm closed two-sided ideal. Following \cite[\S 5.1]{Oc85}, the quotient $\rC^{*}$-algebra $M^\omega := \mathfrak M^\omega(M) / \mathcal I_\omega(M)$ is a von Neumann algebra, known as the {\em Ocneanu ultraproduct} of $M$. We denote the image of $(x_i)_i \in \mathfrak M^\omega(M)$ by $(x_i)^\omega \in M^\omega$. Throughout this paper, we will use the notation from \cite{AH12} for ultraproducts.

We say that a von Neumann subalgebra $P \subset M$ is {\em with expectation} if there exists a faithful normal conditional expectation $\rE_P : M \to P$. In that case, for every nonempty directed set $I$ and every cofinal ultrafilter $\omega$ on $I$, $P^\omega$ is a natural von Neumann subalgebra of $M^\omega$ and the inclusion $P^\omega \subset M^\omega$ is naturally endowed with a faithful normal conditional expectation $\rE_{P^\omega} : M^\omega \to P^\omega$ satisfying $\rE_{P^\omega}((x_i)^\omega) = (\rE_P(x_i))^\omega$ for every $(x_i)^\omega \in M^\omega$ (see the discussion in \cite[Section 2]{HI15} for further details).

\section{Strengthening the spectral gap property for full factors}

We start by recalling the following spectral gap property for full factors and their outer automorphism groups obtained in \cite[Theorem 4.4]{Ma16} and \cite[Lemma 5.3]{Ma16}.

\begin{thm}[{\cite{Ma16}}] \label{gap}
Let $M$ be any full factor. Let $\mathcal V$ be any neighborhood of $1$ in $\Out(M)$. Then there exist a state $\varphi \in M_\ast$ and a family $\xi_1, \dots, \xi_k \in \rL^2(M)_+$ with $\xi_j^2 \leq \varphi$ for every $j \in \{1,\dots, k \}$ such that for all $x \in M$ we have 
\begin{equation}\label{eq-spectral-gap}
\| x-\varphi(x)1 \|_\varphi^2 \leq \sum_{j = 1}^k \| x \xi_j-\xi_j x \|^2
\end{equation}
and for all $x \in M$ and all $\theta \in \Aut(M) \setminus \pi_M^{-1}(\mathcal V)$ we have 
\begin{equation}\label{eq-spectral-gap-out}
\| x \|_\varphi^2 \leq  \sum_{j = 1}^k \| x \xi_j-\theta(\xi_j) x \|^2.
\end{equation}
Moreover, if $M$ is of type $\mathrm{III}$ and $p$ is any nonzero $\sigma$-finite projection, we can choose $\varphi$ such that $\mathrm{supp}(\varphi)=p$. If $M$ is semifinite and $p$ is any nonzero finite projection, we can take $\varphi=p\tau p$ where $\tau$ is the unique semifinite normal trace of $M$ with $\tau(p)=1$.
\end{thm}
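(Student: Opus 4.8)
The plan is to reduce the enhanced inequalities \eqref{eq-spectral-gap} and \eqref{eq-spectral-gap-out} to the basic spectral gap statements for full factors from \cite[Theorem 4.4, Lemma 5.3]{Ma16}, and then to massage the implementing state so that its support is a prescribed projection (in the type $\mathrm{III}$ case) or equals $p\tau p$ (in the semifinite case). First I would recall the content of \cite[Theorem 4.4]{Ma16}: fullness of $M$ means that the adjoint action of $\mathcal U(M)$ on $\rL^2(M)$ has ``spectral gap'' in a suitable sense, which after a standard functional-analytic argument (a net-contradiction argument, or an application of the open mapping theorem in the relevant Banach spaces) produces \emph{some} faithful state $\varphi_0$ and \emph{some} vectors $\eta_1,\dots,\eta_m \in \rL^2(M)$ with $|\eta_j|^2 \le \varphi_0$, $|\eta_j^*|^2 \le \varphi_0$, satisfying $\|x - \varphi_0(x)1\|_{\varphi_0}^2 \le \sum_j \|x\eta_j - \eta_j x\|^2$ for all $x \in M$; by Remark \ref{rem-standard-form} and polarization one can further replace the $\eta_j$ by vectors in $\rL^2(M)_+$ dominated by $\varphi_0$, at the cost of enlarging the family. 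Similarly \cite[Lemma 5.3]{Ma16} gives the automorphism version: for a neighborhood $\mathcal V$ of $1$ in $\Out(M)$, there is a state and a finite family of vectors controlling $\|x\|_\varphi^2$ uniformly over $\theta \notin \pi_M^{-1}(\mathcal V)$. The point is that these are \emph{almost} the statements we want, except for the freedom in the choice of $\varphi$.

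Next I would unify the two families. Given the state $\varphi_1$ and vectors from the first (fullness) inequality and the state $\varphi_2$ and vectors from the second (automorphism) inequality, set $\varphi = \tfrac12(\varphi_1 + \varphi_2)$. Then $\varphi_i \le 2\varphi$, so every vector $\xi \in \rL^2(M)_+$ with $\xi^2 \le \varphi_i$ satisfies $\xi^2 \le 2\varphi$, and after rescaling by $\sqrt 2$ we may assume all vectors have square $\le \varphi$. Moreover $\|\cdot\|_{\varphi_i} \le \sqrt 2\,\|\cdot\|_\varphi$ while $\|\cdot\|_\varphi \le \|\cdot\|_{\varphi_1} + \|\cdot\|_{\varphi_2}$ up to constants; since in \eqref{eq-spectral-gap} the constant in front of the sum has been normalized to $1$, I would simply absorb all constants by adding enough repeated copies of the vectors $\xi_j$ (replacing a bound $\|x - \varphi(x)1\|_\varphi^2 \le \kappa\sum_j \|x\xi_j - \xi_j x\|^2$ by the same bound with $\kappa = 1$ and $\kappa k$ vectors). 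The centering term also needs care: $\|x - \varphi(x)1\|_\varphi$ must be bounded, but the fullness inequality a priori only centers with respect to $\varphi_1$; here one uses that $\|x - \varphi(x)1\|_\varphi^2 = \inf_{\lambda \in \C}\|x - \lambda 1\|_\varphi^2 \le \|x - \varphi_1(x)1\|_\varphi^2$, and the right side is controlled by switching norms as above. This yields \eqref{eq-spectral-gap} and \eqref{eq-spectral-gap-out} simultaneously for a common $\varphi$.

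Finally I would adjust the support of $\varphi$. In the type $\mathrm{III}$ case, fix a nonzero $\sigma$-finite projection $p$. All $\sigma$-finite projections in a type $\mathrm{III}$ factor are Murray--von Neumann equivalent, so there is a partial isometry $v \in M$ with $v^*v = \mathrm{supp}(\varphi)$ and $vv^* = p$; replacing $\varphi$ by $v\varphi v^*$ (i.e. $\psi(x) = \varphi(v^*xv)$) and each $\xi_j$ by $v\xi_j v^*$ transports the inequalities via the $*$-homomorphism $x \mapsto v^*xv$ on the corner $pMp$ — one checks $\|v\xi_j v^*\|^2 = \xi_j^2$-type identities and that commutators are controlled, paying attention to the fact that $x \mapsto vxv^*$ is only a partial isometry conjugation, so I would instead amplify: embed $M \cong p M p$-type reductions are delicate, so more robustly, since a full factor of type $\mathrm{III}$ satisfies $M \cong pMp$ for every nonzero projection, transport the whole inequality through such an isomorphism sending $\mathrm{supp}(\varphi) \mapsto p$. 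In the semifinite case, let $\tau$ be the semifinite trace, $p$ a finite projection with $\tau(p) = 1$, and $\varphi = p\tau p$; here one invokes the \emph{type $\mathrm{II}_1$} spectral gap already available in $pMp$ (Connes' theorem, inequality \eqref{eq-intro-1} applied to the full type $\mathrm{II}_1$ factor $pMp$), noting that the trace vector $\xi_\varphi$ is central so that $x\xi_\varphi = \xi_\varphi x$ forces the extra automorphism term as well via a direct computation, and reinterpreting $\|\cdot\|_2$-commutators as the $\|x\xi_j - \xi_j x\|$ with $\xi_j = a_j \xi_\varphi$.

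I expect the main obstacle to be the \emph{support adjustment} in the type $\mathrm{III}$ case: verifying that transporting the vectors $\xi_j$ through the corner isomorphism preserves both the domination $\xi_j^2 \le \varphi$ and, crucially, the commutator estimates — because the isomorphism $M \cong pMp$ does not act as spatial conjugation by a single unitary on $\rL^2(M)$, one must track how $\rL^2$-vectors and the bimodule action transform, using the canonical identification of standard forms under the isomorphism. A secondary subtlety is bookkeeping the constants so as to genuinely land on constant $1$ in \eqref{eq-spectral-gap} rather than an unspecified $\kappa$; this is routine but must be done carefully since later applications (Theorem \ref{strong-gap}) quote the normalized form.
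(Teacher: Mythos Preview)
The paper does not give a proof of Theorem~\ref{gap}: it is stated as a direct citation of \cite[Theorem~4.4 and Lemma~5.3]{Ma16} (note the attribution ``[{\cite{Ma16}}]'' in the theorem header and the sentence preceding it, ``We start by recalling the following spectral gap property\dots obtained in \cite[Theorem~4.4]{Ma16} and \cite[Lemma~5.3]{Ma16}''). There is therefore nothing to compare your proposal against; the paper treats the full statement---including the freedom on the support of $\varphi$---as already established in \cite{Ma16}.

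Your sketch of how one might reassemble the statement from the cited results is not unreasonable, but it is unnecessary in this context, and parts of it are shakier than you seem to realize. The averaging trick $\varphi=\tfrac12(\varphi_1+\varphi_2)$ does not obviously preserve the centering inequality in the direction you need: you control $\|x-\varphi_1(x)1\|_{\varphi_1}$, and you want $\|x-\varphi(x)1\|_\varphi$, but $\|\cdot\|_\varphi \ge \|\cdot\|_{\varphi_i}$ only up to a factor in the wrong direction, so the chain of inequalities you wrote does not close without further work. Likewise your type~$\mathrm{III}$ support adjustment via a corner isomorphism $M\cong pMp$ is the right idea but, as you yourself flag, requires tracking the standard-form identification carefully; the claim that ``$\|v\xi_j v^*\|^2 = \xi_j^2$-type identities'' hold is not a proof. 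If you actually need to reprove Theorem~\ref{gap} rather than cite it, you should consult \cite{Ma16} directly, where these issues are handled at the source rather than patched after the fact.
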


Theorem \ref{strong-gap} below is a strengthening of Theorem \ref{gap}. It will be crucial in the proof of Theorem \ref{main-thm-full-tensor-products} and Corollary \ref{cor-full-tensor-products}.

\begin{thm} \label{strong-gap}
Let $M$ be any full factor. Let $\mathcal V$ be any neighborhood of $1$ in $\Out(M)$. Take a state $\varphi \in M_*$ and a family $\xi_1, \dots, \xi_k \in \rL^2(M)_+$ with $\xi_j^2 \leq \varphi$ for every $j \in \{1,\dots, k \}$ such that the conclusion of Theorem \ref{gap} holds. For every $j \in \{1, \dots, k\}$, write $\xi_j=a_j \xi_\varphi=\xi_\varphi a_j^{*}$ with $a_j \in \Ball(M)$. Then there exists a constant $\kappa > 0$ such that for all $x \in M$ we have
\begin{equation}\label{eq-spectral-gap-bis}
\| x-\varphi(x)1 \|_\varphi^{2} \leq \kappa \left( \sum_{j = 1}^k \| xa_j-a_jx \|_\varphi^{2} + \inf_{ \lambda \in \R_+} \| x \xi_\varphi -\lambda \xi_\varphi x \|^{2} \right)
\end{equation}
and for all $x \in M$ and all $\theta \in \Aut(M) \setminus \pi_M^{-1}(\mathcal V)$ we have
\begin{equation}\label{eq-spectral-gap-out-bis}
\| x \|_\varphi^{2} \leq \kappa \left( \sum_{j = 1}^k \| xa_j - \theta(a_j) x \|_\varphi^{2} + \inf_{ \lambda \in \R_+} \left\| x \xi_\varphi -\lambda \theta(\xi_\varphi) x \right\|^{2} \right).
\end{equation}
\end{thm}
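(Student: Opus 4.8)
The plan is to start from the inequality \eqref{eq-spectral-gap} of Theorem \ref{gap} and to rewrite each term $\|x\xi_j - \xi_j x\|^2$ in terms of $\|xa_j - a_j x\|_\varphi^2$ plus a controllable error, and similarly for the outer version. The basic observation is the following identity in $\rL^2(M)$: since $\xi_j = a_j \xi_\varphi = \xi_\varphi a_j^*$, we have
\begin{equation*}
x\xi_j - \xi_j x = x a_j \xi_\varphi - \xi_\varphi a_j^* x = (xa_j - a_j x)\xi_\varphi + a_j x\xi_\varphi - \xi_\varphi a_j^* x.
\end{equation*}
The first summand has norm exactly $\|xa_j - a_j x\|_\varphi$. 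For the second, I would insert a scalar $\lambda \in \R_+$ and write $a_j x\xi_\varphi - \xi_\varphi a_j^* x = a_j(x\xi_\varphi - \lambda \xi_\varphi x) - (\xi_\varphi a_j^* x - \lambda a_j \xi_\varphi x)$. Here I want to use that, because $\xi_\varphi \in \rL^2(M)_+$ and $a_j \in \Ball(M)$, left multiplication by $a_j$ and the operation $\eta \mapsto \eta^*$ interact with $\xi_\varphi$ in a bounded way: $\|a_j \eta\| \le \|\eta\|$, and the term $\xi_\varphi a_j^* x - \lambda a_j\xi_\varphi x$ should be estimated by bounding it in terms of $\|x\xi_\varphi - \lambda\xi_\varphi x\|$ after noting $\xi_\varphi a_j^* x = (a_j x^* \xi_\varphi)^* = (x^*\xi_j)^*$ and $a_j \xi_\varphi x = \xi_j x$, so this error term is again of the form $\| (x^* \xi_j)^* - \lambda \xi_j x\|$, which must be re-expanded. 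The cleanest route is probably to bound $\|a_j x \xi_\varphi - \xi_\varphi a_j^* x\|$ directly: using $\xi_\varphi a_j^* x = a_j^* \cdot$ acting on the right... more carefully, writing everything via the bimodule structure $y\eta z = yJz^*J\eta$, one gets $a_j x\xi_\varphi - \xi_\varphi a_j^* x = a_j(x\xi_\varphi) - (a_j^* x)^{**}$-type manipulations; I expect that after taking the infimum over $\lambda$ one lands on a bound
\begin{equation*}
\|x\xi_j - \xi_j x\|^2 \le C\Big(\|xa_j - a_j x\|_\varphi^2 + \inf_{\lambda \in \R_+}\|x\xi_\varphi - \lambda\xi_\varphi x\|^2\Big)
\end{equation*}
with a constant $C$ independent of $x$ (it will depend on $k$ but that is absorbed into $\kappa$). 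Summing over $j$ and feeding this into \eqref{eq-spectral-gap} yields \eqref{eq-spectral-gap-bis}.

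For the outer version \eqref{eq-spectral-gap-out-bis}, I would run the same computation starting from \eqref{eq-spectral-gap-out}, now writing $x\xi_j - \theta(\xi_j)x$ with $\theta(\xi_j) = \theta(a_j)\theta(\xi_\varphi) = \theta(\xi_\varphi)\theta(a_j)^*$. The decomposition becomes
\begin{equation*}
x\xi_j - \theta(\xi_j)x = (xa_j - \theta(a_j)x)\xi_\varphi + \theta(a_j)x\xi_\varphi - \theta(\xi_\varphi)\theta(a_j)^* x,
\end{equation*}
and the same estimate, inserting $\lambda\theta(\xi_\varphi)x$, bounds the second group of terms by $\inf_{\lambda}\|x\xi_\varphi - \lambda\theta(\xi_\varphi)x\|^2$ up to a uniform constant. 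Note that the constant $C$ obtained this way depends only on $k$ and on $\|a_j\|_\infty \le 1$, $\|\theta(a_j)\|_\infty \le 1$, so it is uniform over all $\theta \in \Aut(M)$; this is what makes the argument valid uniformly over $\theta \notin \pi_M^{-1}(\mathcal V)$. Summing over $j$ and combining with \eqref{eq-spectral-gap-out} gives \eqref{eq-spectral-gap-out-bis}, with a single $\kappa$ that works for both inequalities.

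The main obstacle I anticipate is the bookkeeping around the antilinear $*$-operation on $\rL^2(M)$ and making sure the cross-terms really are controlled by $\|x\xi_\varphi - \lambda\xi_\varphi x\|$ rather than by something like $\|x^*\xi_\varphi - \lambda\xi_\varphi x^*\|$, which is a genuinely different quantity. Concretely, the term $\xi_\varphi a_j^* x$ equals $(x^* a_j \xi_\varphi)^* = (x^*\xi_j)^*$, so one is comparing $(x^*\xi_j)^*$ to $\lambda \xi_j x = \lambda a_j \xi_\varphi x$, and to close the loop one needs to pass from $\xi_\varphi x$ back to something involving $\xi_\varphi x$ on the correct side — I expect that the identity $\|\eta^*\| = \|\eta\|$ together with the fact that $a_j$ has norm at most $1$ lets one absorb $a_j$ and its adjoint on either side, so that the whole cross-term is bounded by $\|x\xi_\varphi - \lambda\xi_\varphi x\| + \|(x\xi_\varphi - \lambda\xi_\varphi x)^*\|$, and since $(x\xi_\varphi - \lambda\xi_\varphi x)^* = \xi_\varphi x^* - \lambda x^*\xi_\varphi$ has the same norm as... one must check this carefully, but I believe $\|\eta^*\| = \|\eta\|$ makes it work with a factor of $2$. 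If this symmetry fails, the fallback is to also include the term $\inf_\lambda\|x^*\xi_\varphi - \lambda\xi_\varphi x^*\|^2 = \inf_\lambda\|\xi_\varphi x - \lambda x\xi_\varphi\|^2$ on the right-hand side and observe it equals the existing term after replacing $\lambda$ by $\lambda^{-1}$ and rescaling — so in fact no new term is needed. Once the uniform constant $C$ is in hand, the rest is a one-line substitution.
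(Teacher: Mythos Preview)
There is a genuine gap: your direct algebraic decomposition can only reach the case $\lambda = 1$, not the infimum over $\lambda \in \R_+$. Using $\xi_\varphi a_j^* = a_j \xi_\varphi$, one has in fact the clean identity
\[
x\xi_j - \lambda \xi_j x = (xa_j - a_j x)\xi_\varphi + a_j(x\xi_\varphi - \lambda \xi_\varphi x),
\]
so the error term you were worried about is simply $a_j(x\xi_\varphi - \lambda\xi_\varphi x)$ and there is no antilinear bookkeeping issue. But this only bounds $\|x\xi_j - \lambda\xi_j x\|$, whereas \eqref{eq-spectral-gap} needs $\|x\xi_j - \xi_j x\|$ on the left. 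For $\lambda \neq 1$ the discrepancy $(1-\lambda)\xi_j x$ has norm comparable to $|1-\lambda|\,\|\xi_\varphi x\|$, and this cannot be absorbed into the right-hand side. Concretely, if $x$ is a modular eigenvector with $x\xi_\varphi = \mu^{1/2}\xi_\varphi x$ for some $\mu \neq 1$, then $\inf_\lambda\|x\xi_\varphi - \lambda\xi_\varphi x\| = 0$ while $\|x\xi_\varphi - \xi_\varphi x\|$ is of order $|1 - \mu^{-1/2}|\,\|x\|_\varphi$; your estimate never sees the former. Your fallback claim that $\inf_\lambda\|\xi_\varphi x - \lambda x\xi_\varphi\|^2$ equals $\inf_\lambda\|x\xi_\varphi - \lambda\xi_\varphi x\|^2$ after the substitution $\lambda \mapsto \lambda^{-1}$ is also false: the substitution produces $\inf_{\mu>0}\mu^{-2}\|x\xi_\varphi - \mu\xi_\varphi x\|^2$, which is a different quantity (and in any case does not address the $(1-\lambda)\xi_j x$ term).

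The paper's proof does begin with exactly the identity above to get $\|x_n\xi_j - \lambda_n\xi_j x_n\| \to 0$, but then takes a completely different, nonlinear route: it passes to the polar decomposition $x_n = u_n|x_n|$ and invokes the Powers--St\o rmer type inequality \cite[Proposition~3.1(3)]{Ma16},
\[
2\|x_n\xi_j - \lambda_n\xi_j x_n\|^2 \geq \|\,|x_n|\xi_j - \xi_j|x_n|\,\|^2 + \lambda_n^2\|\,|x_n^*|\xi_j - \xi_j|x_n^*|\,\|^2,
\]
which extracts genuine commutator information (no $\lambda$) for the positive parts $|x_n|$ and $|x_n^*|$. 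Feeding these into \eqref{eq-spectral-gap} shows $|x_n|$ and $\lambda_n|x_n^*|$ are asymptotically scalar; one then bootstraps to show $\mu_n u_n$ (with $\mu_n = \|x_n\|_\varphi$) asymptotically commutes with each $\xi_j$, and \eqref{eq-spectral-gap} again finishes the job. The argument is run via sequences and homogeneity rather than as a single inequality. The same polar-decomposition mechanism handles \eqref{eq-spectral-gap-out-bis}. This nonlinear step is the missing idea in your sketch; without it the infimum over $\lambda$ cannot be obtained, and the infimum is precisely what is needed downstream in Lemma~\ref{lem-technical} and Theorem~\ref{cor-strong-gap}.
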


\begin{proof}[Proof of Theorem \ref{strong-gap}]
{\bf Proof of the first item \eqref{eq-spectral-gap-bis}.} By homogeneity, it is sufficient to prove that for all sequences $x_n \in M$ and $\lambda_n \in \R_+$ $(n \in \N)$ such that $\lim_n \| x_n a_j- a_j x_n \|_\varphi = 0$ for every $j \in \{1, \dots, k\}$ and $\lim_n \| x_n \xi_\varphi -\lambda_n \xi_\varphi x_n \| = 0$, we have that $\lim_n \| x_n-\varphi(x_n) 1 \|_\varphi = 0$. We will do this by using the same technique as in the proof of \cite[Lemma 5.3]{Ma16}.

Put $\mu_n = \|x_n\|_\varphi \in \R_+$ for every $n \in \N$. Observe that the sequence $(\mu_n)_{n \in \N}$ may be unbounded. Since $\lim_n \| x_n \xi_\varphi -\lambda_n \xi_\varphi x_n \| = 0$, we have that $\lim_n  \left| \mu_n - \lambda_n \| x_n^{*}\|_\varphi \right|= 0$. For every $n \in \N$, write $x_n= u_n |x_n| = u_n |x_n| u_n^* \, u_n = |x_n^*|u_n$ for the polar decomposition of $x_n \in M$. For every $j \in \{1, \dots, k\}$ and every $n \in \N$, we have
\begin{align*}
\| x_n \xi_j-\lambda_n \xi_jx_n \| &= \| (x_n a_j - a_j x_n)\xi_\varphi  + a_j( x_n \xi_\varphi - \lambda_n \xi_\varphi x_n) \| \\
&\leq \| x_n a_j-a_jx_n \|_\varphi + \| x_n \xi_\varphi -\lambda_n \xi_\varphi x_n \|.
\end{align*}
Thus, we obtain $\lim_n \| x_n \xi_j-\lambda_n \xi_j x_n \| = 0$ for every $j \in \{1, \dots, k\}$. By \cite[Proposition 3.1 (3)]{Ma16} applied to ``$\xi = \xi_j \in \rL^2(M)_+$" and ``$\eta = \lambda_n \xi_j \in \rL^2(M)_+$", we also have
$$ 2\| x_n \xi_j-\lambda_n \xi_j x_n \|^2 \geq \|  |x_n| \xi_j-\xi_j |x_n|   \|^2+ \| \lambda_n (|x_n^*| \xi_j - \xi_j |x_n^*|) \|^2.$$
Hence we obtain $\lim_n \|  |x_n| \xi_j-\xi_j |x_n|  \| =  0$ and $\lim_n \lambda_n \|  |x_n^*| \xi_j - \xi_j |x_n^*|  \| = 0$ for every $j \in \{1, \dots, k\}$. Then \eqref{eq-spectral-gap} yields $\lim_n \|  |x_n|-\varphi(|x_n|)1  \|_\varphi = 0$ and $\lim_n \lambda_n \|  |x_n^{*}|-\varphi(|x_n^{*}|)1 \|_\varphi =~0$.

Since $ \| |x_n| \|_\varphi  = \|x_n \|_\varphi = \mu_n$ for every $n \in \N$ and since $\lim_n \|  |x_n|-\varphi(|x_n|)1  \|_\varphi = 0$, we have $\lim_n \left|\mu_n - \varphi (|x_n|) \right| = 0$ and hence we obtain $\lim_n \|  |x_n|- \mu_n 1 \|_\varphi = 0$. Likewise, since $\lim_n  \left| \mu_n - \lambda_n \|  |x_n^{*}|  \|_\varphi \right| = \lim_n  \left| \mu_n - \lambda_n \| x_n^{*}\|_\varphi \right| = 0$ and since $\lim_n \| \lambda_n |x_n^{*}| - \lambda_n \varphi(|x_n^{*}|)1  \|_\varphi = 0$, we have $\lim_n | \mu_n - \lambda_n \varphi(|x_n^{*}|)| = 0$ and hence we obtain $\lim_n \|  \lambda_n |x_n^{*}|-\mu_n 1 \|_\varphi = 0$. Since $\lim_n \| |x_n|\xi_\varphi - \mu_n\xi_\varphi  \| = 0$, by multiplying by $u_n$ on the left, we obtain  $\lim_n \|x_n\xi_\varphi - \mu_n u_n\xi_\varphi \| = 0$. Likewise, since $\lim_n \| \lambda_n |x_n^*| \xi_\varphi - \mu_n \xi_\varphi  \| = \lim_n \| \lambda_n \xi_\varphi  |x_n^*| - \mu_n \xi_\varphi  \| = 0$ (using the operator $J$), by multiplying by $u_n$ on the right, we obtain $\lim_n \|  \lambda_n \xi_\varphi x_n - \mu_n\xi_\varphi u_n  \| = 0$. Since $\lim_n \| x_n \xi_\varphi -\lambda_n \xi_\varphi x_n \| =  0$, this yields $\lim_n \mu_n \| u_n \xi_\varphi - \xi_\varphi u_n \| = 0$.

Recall that $a_j \in \Ball(M)$ for all $j \in \{1, \dots, k\}$. Then we obtain 
\begin{align*}
\limsup_n \mu_n \|u_n \xi_j - \xi_j u_n\| &= \limsup_n  \| \mu_n u_n a_j \xi_\varphi - \mu_n a_j \xi_\varphi u_n\| \\
&= \limsup_n  \|\mu_n u_n  \xi_\varphi a_j^* - \mu_n a_j \xi_\varphi u_n\| \quad (\text{since } a_j \xi_\varphi = \xi_\varphi a_j^*) \\
&\leq \limsup_n  \|\mu_nu_n  \xi_\varphi a_j^* - \mu_n a_j u_n \xi_\varphi \|  \quad (\text{since } \lim_n \mu_n \|u_n \xi_\varphi - \xi_\varphi u_n\| = 0) \\
&\leq \limsup_n  \|x_n  \xi_\varphi a_j^* - a_j x_n \xi_\varphi \|  \quad (\text{since } \lim_n \|x_n \xi_\varphi -  \mu_n u_n\xi_\varphi \| = 0)\\
&\leq \limsup_n  \|x_n  a_j \xi_\varphi - a_j x_n \xi_\varphi \| \quad (\text{since } a_j \xi_\varphi = \xi_\varphi a_j^*) \\
&= 0.
\end{align*}
This implies that $\lim_n  \mu_n\|u_n \xi_j - \xi_j u_n\| = 0$ for all $j \in \{1, \dots, k\}$. Then \eqref{eq-spectral-gap} implies that $\lim_n \|\mu_n u_n - \varphi(\mu_n u_n) 1\|_\varphi = 0$. Since $\lim_n \|x_n - \mu_n u_n\|_\varphi = 0$, Cauchy-Schwarz inequality yields $\lim_n |\varphi(x_n - \mu_nu_n)| = 0$. Therefore, we finally obtain $\lim_n \|x_n - \varphi(x_n) 1\|_\varphi = 0$.

{ \bf Proof of the second item \eqref{eq-spectral-gap-out-bis}.} We follow the same pattern as before. By homogeneity, it is sufficient to prove that for all sequences $x_n \in M$, $\lambda_n \in \R_+$ and $\theta_n \in \Aut(M) \setminus \pi_M^{-1}(\mathcal V)$ $(n \in \N)$ such that $\lim_n \| x_n a_j- \theta_n(a_j) x_n \|_\varphi = 0$ for every $j \in \{1, \dots, k\}$ and $\lim_n \| x_n \xi_\varphi -\lambda_n \theta_n(\xi_\varphi) x_n \| = 0$, we have that $\lim_n \| x_n \|_\varphi = 0$. 

Put $\mu_n = \|x_n\|_\varphi \in \R_+$ for every $n \in \N$. Since $\lim_n \| x_n \xi_\varphi -\lambda_n \theta_n(\xi_\varphi) x_n \| = 0$, we have that $\lim_n  \left| \mu_n - \lambda_n \| \theta_n^{-1}(x_n^{*})\|_\varphi \right|= \lim_n  \left| \mu_n - \lambda_n \| \theta_n(\xi_\varphi)x_n\| \right| =  0$.  For every $n \in \N$, write $x_n= u_n |x_n| = u_n |x_n| u_n^* \, u_n=|x_n^*|u_n$ for the polar decomposition of $x_n \in M$. For every $j \in \{1, \dots, k\}$ and every $n \in \N$, we have
\begin{align*}
\| x_n \xi_j-\lambda_n \theta_n(\xi_j) x_n \| &= \| (x_n a_j - \theta_n(a_j) x_n)\xi_\varphi  + \theta_n(a_j)( x_n \xi_\varphi - \lambda_n \theta_n(\xi_\varphi) x_n) \| \\
&\leq \| x_n a_j-\theta_n(a_j)x_n \|_\varphi + \| x_n \xi_\varphi -\lambda_n \theta_n(\xi_\varphi) x_n \|.
\end{align*}
Thus, we obtain $\lim_n \| x_n \xi_j-\lambda_n \theta_n(\xi_j) x_n \| = 0$ for every $j \in \{1, \dots, k\}$. By \cite[Proposition 3.1 (3)]{Ma16} applied to ``$\xi = \xi_j \in \rL^2(M)_+$" and ``$\eta = \lambda_n \theta_n(\xi_j) \in \rL^2(M)_+$", we also have
$$ 2\| x_n \xi_j-\lambda_n \theta_n(\xi_j) x_n \|^2 \geq \|  |x_n| \xi_j-\xi_j |x_n|  \|^2+ \|  \lambda_n (|x_n^*| \theta_n(\xi_j) - \theta_n(\xi_j) |x_n^*|)  \|^2.$$
Hence we obtain $\lim_n \|  |x_n| \xi_j-\xi_j |x_n|  \| =  0$ and $\lim_n \lambda_n \| \, \theta_n^{-1}(|x_n^*|) \xi_j - \xi_j \theta_n^{-1}(|x_n^*|)  \| = 0$ for every $j \in \{1, \dots, k\}$. Then \eqref{eq-spectral-gap} implies that $\lim_n \|  |x_n|-\varphi(|x_n|)1  \|_\varphi = 0$ and $\lim_n \lambda_n \| \theta_n^{-1}(|x_n^*|) -\varphi(\theta_n^{-1}(|x_n^*|))1   \|_\varphi = 0$. 

Since $ \| |x_n| \|_\varphi  = \|x_n \|_\varphi = \mu_n$ for every $n \in \N$ and since $\lim_n \|  |x_n|-\varphi(|x_n|)1  \|_\varphi = 0$, we have $\lim_n \left|\mu_n - \varphi (|x_n|) \right| = 0$ and hence we obtain $\lim_n \|  |x_n| \xi_\varphi- \mu_n \xi_\varphi \| = 0$. Likewise, since $\lim_n  \left| \mu_n - \lambda_n \|  \theta_n^{-1}(|x_n^{*}|)  \|_\varphi \right| = \lim_n  \left| \mu_n - \lambda_n \| \theta_n^{-1}(x_n^{*})\|_\varphi \right| = 0$ and since $\lim_n \|  \lambda_n \theta_n^{-1}(|x_n^{*}|) - \lambda_n \varphi(\theta_n^{-1}(|x_n^{*}|))1   \|_\varphi = 0$, we have $\lim_n | \mu_n - \lambda_n \varphi(\theta_n^{-1}(|x_n^{*}|))| = 0$. This in turn implies that $\lim_n \|  \lambda_n |x_n^{*}| \theta_n(\xi_\varphi)-\mu_n \theta_n(\xi_\varphi) \| = \lim_n \|  \lambda_n \theta_n^{-1}(|x_n^{*}|)-\mu_n 1 \|_\varphi = 0$. Since $\lim_n \| |x_n|\xi_\varphi - \mu_n\xi_\varphi  \| =0$, by multiplying by $u_n$ on the left, we obtain  $\lim_n \|x_n\xi_\varphi - \mu_n u_n\xi_\varphi \| = 0$. Likewise, since $\lim_n \| \lambda_n |x_n^*| \theta_n(\xi_\varphi) - \mu_n \theta_n(\xi_\varphi)  \| = \lim_n \| \lambda_n \theta_n(\xi_\varphi)  |x_n^*| - \mu_n \theta_n(\xi_\varphi)  \| = 0$ (using the operator $J$), by multiplying by $u_n$ on the right, we obtain $\lim_n \|  \lambda_n \theta_n(\xi_\varphi) x_n - \mu_n\theta_n(\xi_\varphi) u_n  \| = 0$. Since $\lim_n \| x_n \xi_\varphi -\lambda_n \theta_n(\xi_\varphi) x_n \| =  0$, this yields $\lim_n \mu_n \| u_n \xi_\varphi - \theta_n(\xi_\varphi) u_n \| = 0$. 

Recall that $a_j \in \Ball(M)$ for all $j \in \{1, \dots, k\}$. Then we obtain 
\begin{align*}
& \limsup_n \mu_n \|u_n \xi_j - \theta_n(\xi_j) u_n\| \\
&= \limsup_n  \| \mu_n u_n a_j \xi_\varphi - \mu_n \theta_n(a_j) \theta_n(\xi_\varphi) u_n\| \\
&= \limsup_n  \|\mu_n u_n  \xi_\varphi a_j^* - \mu_n \theta_n(a_j) \theta_n(\xi_\varphi) u_n\| \quad (\text{since } a_j \xi_\varphi = \xi_\varphi a_j^*) \\
&\leq \limsup_n  \|\mu_nu_n  \xi_\varphi a_j^* - \mu_n \theta_n(a_j) u_n \xi_\varphi \|  \quad (\text{since } \lim_n \mu_n \|u_n \xi_\varphi - \theta_n(\xi_\varphi) u_n\| = 0) \\
&\leq \limsup_n  \|x_n  \xi_\varphi a_j^* - \theta_n(a_j) x_n \xi_\varphi \|  \quad (\text{since } \lim_n \|x_n \xi_\varphi -  \mu_n u_n\xi_\varphi \| = 0)\\
&\leq \limsup_n  \|x_n  a_j \xi_\varphi - \theta_n(a_j) x_n \xi_\varphi \| \quad (\text{since } a_j \xi_\varphi = \xi_\varphi a_j^*) \\
&= 0.
\end{align*}
This implies that $\lim_n  \mu_n\|u_n \xi_j - \theta_n(\xi_j) u_n\| = 0$ for all $j \in \{1, \dots, k\}$. Then \eqref{eq-spectral-gap-out} implies that $\lim_n \|\mu_n u_n \|_\varphi = 0$. Since $\lim_n \|x_n - \mu_n u_n\|_\varphi = 0$, we obtain $\lim_n \|x_n\|_\varphi = 0$.
\end{proof}

In the semifinite case, we can obtain a much simpler statement.

\begin{thm} \label{spectral-gap-finite}
Let $M$ be any full semifinite factor. Let $p$ be any nonzero finite projection and let $\varphi =p\tau p$ where $\tau$ is the unique semifinite normal trace of $M$ such that $\tau(p)=1$. Let $\mathcal V$ be any neighborhood of $1$ in $\Out(M)$. Then we can find $\kappa > 0$ and $a_1,\dots,a_k \in (pMp)_+$ with $\| a_j \| \leq 1$ such that for all $x \in M$ we have
\begin{equation}\label{eq-spectral-gap-finite}
\| x-\varphi(x)1 \|_\varphi^{2} \leq \kappa \sum_{j = 1}^k \| xa_j-a_jx \|_\varphi^{2}
\end{equation}
and for all $x \in M$ and all $\theta \in \Aut(M) \setminus \pi_M^{-1}(\mathcal V)$ we have
\begin{equation}\label{eq-spectral-gap-out-finite}
\| x \|_\varphi^{2} \leq \kappa \sum_{j = 1}^k \| xa_j - \theta(a_j) x \|_\varphi^{2}. 
\end{equation}

\end{thm}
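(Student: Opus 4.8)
The plan is to deduce this semifinite statement from Theorem \ref{strong-gap} by showing that in the semifinite case the extra term $\inf_{\lambda \in \R_+} \|x\xi_\varphi - \lambda\xi_\varphi x\|^2$ is automatically controlled by the commutator terms, after possibly enlarging the family $\{a_j\}$. First I would invoke Theorem \ref{gap} with the prescribed $\varphi = p\tau p$; since $\tau$ is a trace, $\xi_\varphi \in \rL^2(M)_+$ is the canonical tracial vector supported under $p$, and the vectors $\xi_j \in \rL^2(M)_+$ with $\xi_j^2 \le \varphi$ are exactly of the form $\xi_j = a_j\xi_\varphi = \xi_\varphi a_j$ with $a_j = a_j^* \in (pMp)_+$, $\|a_j\|\le 1$ — here the key simplification is that $a_j\xi_\varphi = \xi_\varphi a_j$ because $\varphi$ is tracial on $pMp$, so the left and right actions of $a_j$ on $\xi_\varphi$ coincide. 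Consequently $\|x a_j - a_j x\|_\varphi = \|x\xi_j - \xi_j x\|$, i.e.\ the two kinds of norms appearing in \eqref{eq-spectral-gap} and \eqref{eq-spectral-gap-finite} literally agree for these vectors, and likewise $\|x a_j - \theta(a_j)x\|_\varphi = \|x\xi_j - \theta(\xi_j)x\|$ since $\theta(\xi_j) = \theta(a_j)\xi_{\theta(\varphi)}$ and one checks $\theta(a_j)\xi_\varphi$ differs from $\theta(\xi_j)$ by a controlled amount, or more cleanly one works with the trace-invariant vector directly.

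The substantive point is therefore to eliminate the term $\inf_\lambda \|x\xi_\varphi - \lambda\xi_\varphi x\|^2$. I would run the homogeneity/sequence argument exactly as in the proof of Theorem \ref{strong-gap}: assume $\lim_n \|x_n a_j - a_j x_n\|_\varphi = 0$ for all $j$ and derive $\lim_n \|x_n - \varphi(x_n)1\|_\varphi = 0$, now \emph{without} assuming anything about $\lambda_n$. The gain in the semifinite case is that one may simply take $\lambda_n = 1$ for all $n$: because $\xi_\varphi$ is tracial, $\|x_n\xi_\varphi\| = \|x_n\|_\varphi = \|x_n^*\|_\varphi = \|\xi_\varphi x_n\|$ automatically, and more importantly the inequality $\|x_n a_j - a_j x_n\|_\varphi \to 0$ already gives $\|x_n\xi_j - \xi_j x_n\| \to 0$ directly without needing to produce $\lambda_n$. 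Feeding this into \eqref{eq-spectral-gap} gives $\||x_n| - \varphi(|x_n|)1\|_\varphi \to 0$, hence $\||x_n| - \mu_n 1\|_\varphi \to 0$ with $\mu_n = \|x_n\|_\varphi$; running the parallel estimate on the right and using the trace to identify left and right multiplication by $\xi_\varphi$ yields $\mu_n\|u_n\xi_\varphi - \xi_\varphi u_n\| \to 0$, then $\mu_n\|u_n\xi_j - \xi_j u_n\| \to 0$ by the same chain of inequalities as in Theorem \ref{strong-gap} (which only used $a_j\xi_\varphi = \xi_\varphi a_j^*$, available here), and finally \eqref{eq-spectral-gap} applied to $\mu_n u_n$ closes the argument. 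The outer-automorphism statement \eqref{eq-spectral-gap-out-finite} is obtained by the same scheme, replacing $\xi_j$ by $\theta_n(\xi_j)$ throughout and invoking \eqref{eq-spectral-gap-out} at the end.

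The main obstacle I anticipate is bookkeeping around the identification $\theta(\xi_\varphi)$ versus $\xi_{\theta(\varphi)}$ in the outer part: for a general $\theta$, $\theta(\varphi)$ is no longer a trace-scaled functional, so the clean symmetry between left and right multiplication is lost on the $\theta$-side. The remedy is to keep $\lambda_n$ in play only on the $\theta$-side (as in Theorem \ref{strong-gap}) but observe that the estimate $\lim_n |\mu_n - \lambda_n\|\theta_n^{-1}(x_n^*)\|_\varphi| = 0$ together with traciality of $\varphi$ on the domain side still forces $\lambda_n\mu_n$ to track $\mu_n^2/\mu_n$, so the troublesome term never actually needs to appear in the final inequality — the commutator terms $\|x_n a_j - \theta_n(a_j)x_n\|_\varphi$ dominate everything, exactly because $a_j, \theta_n(a_j) \in$ bounded sets and $\varphi$ is a trace on $pMp$. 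Once this is checked the rest is a routine transcription of the two arguments already carried out in the proof of Theorem \ref{strong-gap}.
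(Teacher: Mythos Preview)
Your treatment of the first inequality \eqref{eq-spectral-gap-finite} is correct but circuitous: once you have observed that $a_j\xi_\varphi = \xi_\varphi a_j$ and hence $\|xa_j - a_jx\|_\varphi = \|(xp)\xi_j - \xi_j(xp)\|$, the inequality \eqref{eq-spectral-gap} from Theorem~\ref{gap} (with no extra term) gives \eqref{eq-spectral-gap-finite} immediately. There is nothing to ``eliminate'': Theorem~\ref{strong-gap} is not needed for this part, and the paper proves it in two lines directly from Theorem~\ref{gap}. Your sequence argument would work but is superfluous.

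For the second inequality \eqref{eq-spectral-gap-out-finite} there is a genuine gap. The identity $\|xa_j - \theta(a_j)x\|_\varphi = \|x\xi_j - \theta(\xi_j)x\|$ that you assert does \emph{not} hold: writing everything in $\rL^2(M,\tau)$ one has $\theta(\xi_j)x = \mathrm{mod}(\theta)^{-1/2}\theta(a_j)x$ while $(\theta(a_j)x)\xi_\varphi = \theta(a_j)xp$, so the two differ by the scalar $\mathrm{mod}(\theta)^{-1/2}$, which need not be close to $1$. Your proposed remedy (``keep $\lambda_n$ in play only on the $\theta$-side'' and hope the commutator terms dominate) does not repair this, because even after reducing to $x \in \theta(p)Mp$ the discrepancy $\|(1-\mathrm{mod}(\theta)^{-1/2})\theta(a_j)x\|$ is not controlled by $\|xa_j - \theta(a_j)x\|_\varphi$. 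The paper resolves this by a different mechanism: it \emph{does} invoke Theorem~\ref{strong-gap}, specifically \eqref{eq-spectral-gap-out-bis}, and observes that for $x \in \theta(p)Mp$ the choice $\lambda = \mathrm{mod}(\theta)^{-1/2}$ gives $x\xi_\varphi - \lambda\theta(\xi_\varphi)x = 0$ \emph{exactly} (since $\theta(\tau^{1/2}) = \lambda^{-1}\tau^{1/2}$ in the core), so the infimum term in \eqref{eq-spectral-gap-out-bis} vanishes and one gets $\|x\|_\varphi^2 \le \kappa\sum_j \|xa_j - \theta(a_j)x\|_\varphi^2$ for such $x$. For general $x \in M$ one then splits $\|x\|_\varphi^2 = \|\theta(p^\perp)xp\|_\varphi^2 + \|\theta(p)xp\|_\varphi^2$ and adds $a_{k+1} = p$ to the family to control the first piece. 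This step --- the exact choice of $\lambda$ via the modulus of $\theta$ and the enlargement of the family by $p$ --- is the missing idea in your plan.
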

\begin{proof}
Take $\xi_1,\dots,\xi_k \in \rL^{2}(M)_{+}$ with $\xi_j^2 \leq \varphi$ for every $j \in \{1,\dots, k \}$ satisfying the conclusion of Theorem \ref{gap}. For every $j \in \{1, \dots, k\}$, write $\xi_j=a_j \xi_\varphi \in \rL^2(M)_+$ with $a_j \in \Ball(pMp)$.
Then for all $x \in M$ we have
\[ \| (xp) \xi_j-\xi_j (xp) \|^2 =\| x a_j-a_j x \|_\varphi^2, \]
and therefore
\[ \| x-\varphi(x)1 \|_\varphi^2=\| xp-\varphi(xp)1 \|_\varphi^2 \leq \sum_{j=1}^{k} \| x a_j-a_j x \|_\varphi^2, \]
which is exactly the first item \eqref{eq-spectral-gap-finite}.

For the second item, we will need Theorem \ref{strong-gap}. Let $\tau$ be the unique faithful semifinite normal trace of $M$ such that $\tau p =p \tau=\varphi$. Fix $\theta \in \Aut(M) \setminus \pi_M^{-1}(\mathcal V)$ and let $\lambda=\mathrm{mod}(\theta)^{-1/2}$ so that $\theta(\tau^{1/2})=\lambda^{-1}\tau^{1/2}$, where $\tau^{1/2}$ is viewed as an operator affiliated with the core of $M$ as in \cite{Ma16}. Take $x \in \theta(p)Mp$. Then we have 
\[ \lambda \theta(\xi_\varphi)x=\lambda \theta(\tau^{1/2})x=\tau^{1/2}x=x\tau^{1/2}=x \xi_\varphi \]
hence by \eqref{eq-spectral-gap-out-bis} we have
\[ \| x \|_\varphi^{2} \leq \kappa \sum_{j = 1}^k \| xa_j - \theta(a_j) x \|_\varphi^{2} \]
for all $x \in \theta(p)Mp$. Now, for all $x \in M$ we obtain
\[ \| x \|_\varphi^{2}=\| \theta(p^{\perp})xp \|_\varphi^{2}+\| \theta(p)xp \|_\varphi^{2} \leq \| xp - \theta(p) x \|_\varphi^{2}+\kappa \sum_{j = 1}^k \| xa_j - \theta(a_j) x \|_\varphi^{2}. \]
Finally, by taking $a_{k+1}=p$ and replacing $\kappa$ by $\max(1,\kappa)$ we obtain \eqref{eq-spectral-gap-out-finite}.
\end{proof}

Using Theorem \ref{strong-gap}, we also strengthen \cite[Theorem 5.4]{Ma16}. 

\begin{thm}\label{crossed-product}
Let $M$ be any full factor. Let $\sigma : G \rightarrow \Aut(M)$ be an outer action of a discrete group $G$ such that the image of $\sigma(G)$ in $\Out(M)$ is discrete. Let $\mathcal{V}$ be any neighborhood of $1$ in $\Out(M)$ such that $\sigma( G ) \cap \pi_M^{-1}(\mathcal{V})=\{ \id_M \}$. Choose a state $\varphi \in M_\ast$, a family $a_1,\dots,a_k \in M$ and $\kappa > 0$ which satisfy the conclusion of Theorem \ref{strong-gap}. Then for all $x \in M \rtimes_\sigma G$ we have
$$ \| x- \varphi(x)1 \|_\varphi^{2} \leq \kappa \left( \sum_{j=1}^k \| a_k x-xa_k \|_\varphi^2 + \inf_{ \lambda \in \R_+} \| x \xi_\varphi -\lambda \xi_\varphi x \|^{2} \right), $$
where we used the canonical conditional expectation $\rE: M \rtimes_\sigma G \rightarrow M$ to lift $\varphi$ to a state on $M \rtimes_\sigma G$ and to view $\rL^{2}(M)$ as a substandard form of $\rL^{2}(M \rtimes_\sigma G)$.
\end{thm}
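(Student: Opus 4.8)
The plan is to use the canonical faithful normal conditional expectation $\rE\colon N\to M$, where $N:=M\rtimes_\sigma G$ and $\psi:=\varphi\circ\rE$, to split $\rL^2(N)=\rL^2(M)\oplus\rL^2(M)^{\perp}$ and to estimate the two pieces separately: the $\rL^2(M)$-piece with the first inequality \eqref{eq-spectral-gap-bis} of Theorem \ref{strong-gap}, and the $\rL^2(M)^{\perp}$-piece with the second inequality \eqref{eq-spectral-gap-out-bis}. Recall that the embedding $\rL^2(M)\subset\rL^2(N)$ induced by $\rE$ is $J_N$-equivariant and $M$-bimodular, and that the orthogonal projection $e_M$ of $\rL^2(N)$ onto $\rL^2(M)$ commutes with $J_N$ and with left and right multiplication by $M$, so that $e_M(x\xi_\varphi)=\rE(x)\xi_\varphi$ and $e_M(\xi_\varphi x)=\xi_\varphi\rE(x)$ for every $x\in N$. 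Hence, writing $z:=x-\rE(x)$ and decomposing termwise along $\rL^2(N)=\rL^2(M)\oplus\rL^2(M)^{\perp}$, we get for every $x\in N$ and every $\lambda\in\R_+$
\begin{align*}
\|x-\varphi(x)1\|_\varphi^2 &= \|\rE(x)-\varphi(x)1\|_\varphi^2 + \|z\|_\varphi^2 ,\\
\sum_{j=1}^k\|a_jx-xa_j\|_\varphi^2 &= \sum_{j=1}^k\|a_j\rE(x)-\rE(x)a_j\|_\varphi^2 + \sum_{j=1}^k\|a_jz-za_j\|_\varphi^2 ,\\
\|x\xi_\varphi-\lambda\xi_\varphi x\|^2 &= \|\rE(x)\xi_\varphi-\lambda\xi_\varphi\rE(x)\|^2 + \|z\xi_\varphi-\lambda\xi_\varphi z\|^2 ,
\end{align*}
where in each line the first summand lies in $\rL^2(M)$ and the second in $\rL^2(M)^{\perp}$.

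Applying \eqref{eq-spectral-gap-bis} to $\rE(x)\in M$ (and, for a fixed $\lambda$, bounding the infimum that appears there by its value at $\lambda$) shows that the first summand of the first line is at most $\kappa$ times the sum of the first summands of the other two lines. So it remains to prove that for every $z\in N$ with $\rE(z)=0$ and every $\lambda\in\R_+$
$$ \|z\|_\varphi^2 \ \le\ \kappa\,\Bigl( \sum_{j=1}^k\|a_jz-za_j\|_\varphi^2 \ +\ \|z\xi_\varphi-\lambda\xi_\varphi z\|^2 \Bigr) ; $$
adding this to the previous estimate and taking the infimum over $\lambda\in\R_+$ then gives the theorem. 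Writing the Fourier expansion $z=\sum_{g\in G\setminus\{e\}}z_gu_g$ and setting $\mathcal{H}_g:=\overline{Mu_g\xi_\varphi}\subset\rL^2(N)$, the subspaces $(\mathcal{H}_g)_{g\ne e}$ are pairwise orthogonal with sum $\rL^2(M)^{\perp}$, $J_N$ maps $\mathcal{H}_g$ onto $\mathcal{H}_{g^{-1}}$, and one checks that $z_gu_g\xi_\varphi\in\mathcal{H}_g$, $\xi_\varphi z_gu_g\in\mathcal{H}_g$ while $a_j(z_gu_g)-(z_gu_g)a_j=(a_jz_g-z_g\sigma_g(a_j))u_g\in Mu_g$. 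Hence the three quantities in the displayed inequality split as orthogonal $\ell^2$-sums over $g\ne e$, and it suffices to prove, for every $g\ne e$ and every $y\in M$,
$$ \|yu_g\|_\varphi^2 \ \le\ \kappa\,\Bigl( \sum_{j=1}^k\|a_j(yu_g)-(yu_g)a_j\|_\varphi^2 \ +\ \|(yu_g)\xi_\varphi-\lambda\xi_\varphi(yu_g)\|^2 \Bigr) . $$

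For this, identify $\mathcal{H}_g$ isometrically with $\rL^2(M)$ through $xu_g\xi_\varphi\mapsto x\,\sigma_g(\xi_\varphi)$ (note $\sigma_g(\xi_\varphi)=\xi_{\varphi\circ\sigma_g^{-1}}$), then compose with $\sigma_g^{-1}$ (which sends $\sigma_g(\xi_\varphi)$ to $\xi_\varphi$), and put $w:=\sigma_g^{-1}(y)\in M$. A direct computation gives $\|yu_g\|_\varphi=\|w\|_\varphi$, $\|a_j(yu_g)-(yu_g)a_j\|_\varphi=\|wa_j-\sigma_g^{-1}(a_j)w\|_\varphi$ and $(yu_g)\xi_\varphi\mapsto w\xi_\varphi$, while the last relation $\xi_\varphi(yu_g)\mapsto c_g\,\sigma_g^{-1}(\xi_\varphi)\,w$, for some constant $c_g\in(0,\infty)$ depending only on $g$, requires more care (see below). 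Since $g\ne e$ and $\sigma(G)\cap\pi_M^{-1}(\mathcal{V})=\{\id_M\}$, we have $\sigma_g^{-1}\in\Aut(M)\setminus\pi_M^{-1}(\mathcal{V})$, so \eqref{eq-spectral-gap-out-bis} applied with $\theta=\sigma_g^{-1}$ gives
$$ \|w\|_\varphi^2 \ \le\ \kappa\,\Bigl( \sum_{j=1}^k\|wa_j-\sigma_g^{-1}(a_j)w\|_\varphi^2 \ +\ \inf_{\mu\in\R_+}\|w\xi_\varphi-\mu\,\sigma_g^{-1}(\xi_\varphi)w\|^2 \Bigr) , $$
and bounding the infimum by its value at $\mu=\lambda c_g$ yields the displayed componentwise inequality; summing over $g\ne e$ and then reassembling through the decompositions of the first paragraph finishes the proof. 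The main obstacle is precisely the relation $\xi_\varphi(yu_g)\mapsto c_g\,\sigma_g^{-1}(\xi_\varphi)w$: one must identify $\mathcal{H}_g$, \emph{together with its distinguished vector $\xi_\varphi=\xi_\psi$}, with the $\sigma_g$-twist of the standard bimodule of $M$, i.e.\ check that through the above identifications $\xi_\varphi(yu_g)$ becomes an honest positive scalar multiple of $\sigma_g^{-1}(\xi_\varphi)w$ and not a modularly rotated vector; this amounts to a computation with the relative modular operator of $\varphi\circ\sigma_g$ with respect to $\varphi$ on $\rL^2(M)$. Granting this bookkeeping, everything else is a routine assembly of the two inequalities provided by Theorem \ref{strong-gap}.
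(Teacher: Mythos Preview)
Your proposal is correct and follows essentially the same approach as the paper: decompose along the Fourier expansion in $M\rtimes_\sigma G$, apply \eqref{eq-spectral-gap-bis} to the $M$-component and \eqref{eq-spectral-gap-out-bis} with $\theta=\sigma_g^{-1}$ to each $g\neq e$ component, then reassemble. The only notable difference is bookkeeping: the paper parametrizes directly by the coefficients $x^g:=\rE(u_g^*x)\in M$ (so that $x=\sum_g u_g x^g$), which are exactly your $w=\sigma_g^{-1}(z_g)$, and then simply asserts the orthogonal decomposition
\[
\|x\xi_\varphi-\lambda\xi_\varphi x\|^2=\|\rE(x)\xi_\varphi-\lambda\xi_\varphi\rE(x)\|^2+\sum_{g\neq e}\|x^g\xi_\varphi-\lambda\,\sigma_{g^{-1}}(\xi_\varphi)\,x^g\|^2
\]
without further comment. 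This is precisely the ``obstacle'' you flag, and it shows that in your notation $c_g=1$ with no modular rotation; the paper treats this identification of the right action on $\mathcal H_g$ as standard for the crossed-product standard form rather than as something requiring a separate argument.
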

\begin{proof}Take any $x \in M \rtimes_\sigma G$ and let $x^{g}=\rE(u_g^*x) \in M$ for all $g \in G$. Then we have
$$ \| \rE(x)-\varphi(x)1 \|_\varphi^2 \leq \kappa \left( \sum_{j=1}^k \| \rE(x)a_j-a_j \rE(x) \|_\varphi^2 + \inf_{ \lambda \in \R_+} \| \rE(x) \xi_\varphi -\lambda \xi_\varphi \rE(x) \|^{2} \right)$$
and 
$$\forall g \in G \setminus \{1\}, \quad \| x^g \|_\varphi^2 \leq \kappa  \left(\sum_{j=1}^k \| x^ga_j-\sigma_{g^{-1}}(a_j)x^g \|_\varphi^2 + \inf_{ \lambda \in \R_+} \| x^{g} \xi_\varphi -\lambda \sigma_{g^{-1}}(\xi_\varphi) x^{g} \|^{2} \right). $$
Now, since for all $j \in \{1,\dots, k \}$ and all $\lambda \in \R_+$ we have
$$ \| xa_j-a_j x \|_\varphi^2 = \| \rE(x)a_j-a_j \rE(x) \|_\varphi^2+ \sum_{ g \in G \setminus \{1\} } \| x^ga_j-\sigma_{g^{-1}}(a_j)x^g \|_\varphi^2,$$
$$ \| x\xi_\varphi -\lambda \xi_\varphi x \|^2 = \| \rE(x) \xi_\varphi -\lambda \xi_\varphi \rE(x) \|^{2} + \sum_{ g \in G \setminus \{1\} } \| x^{g} \xi_\varphi -\lambda \sigma_{g^{-1}}(\xi_\varphi) x^{g} \|^{2},$$
and since 
$$ \|x-\varphi(x)1 \|_\varphi^2 = \| \rE(x)-\varphi(x)1 \|_\varphi^2 + \|x-\rE(x) \|_\varphi^{2}= \| \rE(x)-\varphi(x)1 \|_\varphi^2 + \sum_{g \in G \setminus \{ 1 \} } \| x^g \|_\varphi^2,$$
we conclude that
\begin{equation*} \|x-\varphi(x)1 \|_\varphi^2 \leq  \kappa \left( \sum_{j=1}^k \| a_k x-xa_k \|_\varphi^2 + \inf_{ \lambda \in \R_+} \| x \xi_\varphi -\lambda \xi_\varphi x \|^{2} \right). \qedhere
\end{equation*}
\end{proof}

With a similar proof, we also obtain a semifinite version that we will need for Theorem \ref{main-thm-almost-periodic}.
\begin{thm}\label{crossed-product-finite}
Let $M$ be any full semifinite factor. Let $\sigma: G \rightarrow \Aut(M)$ be an outer action of a discrete group $G$ such that the image of $\sigma(G)$ in $\Out(M)$ is discrete. Let $\mathcal{V}$ be any neighborhood of $1$ in $\Out(M)$ such that $\sigma( G ) \cap \pi_M^{-1}(\mathcal{V})=\{ \id_M \}$. Take any nonzero finite projection $p \in M$ and let $\varphi=p\tau p$ where $\tau$ is the unique faithful semifinite normal trace of $M$ such that $\tau(p)=1$. Take $\kappa > 0$ and $a_1,\dots,a_k \in M$ as in Theorem \ref{spectral-gap-finite}. Then, for all $x \in M \rtimes_\sigma G$ we have
$$ \| x- \varphi(x)1 \|_\varphi^{2} \leq \kappa \sum_{j=1}^k \| a_k x-xa_k \|_\varphi^2 $$
where we used the canonical conditional expectation $\rE: M \rtimes_\sigma G \rightarrow M$ to lift $\varphi$ to a state on $M \rtimes_\sigma G$.
\end{thm}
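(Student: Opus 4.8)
The plan is to mimic the proof of Theorem \ref{crossed-product} almost verbatim, replacing every application of Theorem \ref{strong-gap} by an application of the semifinite spectral gap Theorem \ref{spectral-gap-finite}, so that the extra term $\inf_{\lambda \in \R_+}\|x\xi_\varphi - \lambda\xi_\varphi x\|^2$ never appears. First I would fix $x \in M \rtimes_\sigma G$ and set $x^g = \rE(u_g^* x) \in M$ for each $g \in G$, where $\rE : M \rtimes_\sigma G \to M$ is the canonical conditional expectation, so that $x = \sum_{g} u_g x^g$ in the $\rL^2$-sense. Applying \eqref{eq-spectral-gap-finite} to $\rE(x) = x^1 \in M$ gives
$$\|\rE(x) - \varphi(x)1\|_\varphi^2 \le \kappa \sum_{j=1}^k \|\rE(x) a_j - a_j \rE(x)\|_\varphi^2,$$
and applying \eqref{eq-spectral-gap-out-finite} with the automorphism $\theta = \sigma_{g^{-1}}$ — which for $g \neq 1$ lies outside $\pi_M^{-1}(\mathcal V)$ by the hypothesis $\sigma(G) \cap \pi_M^{-1}(\mathcal V) = \{\id_M\}$ and outerness of $\sigma$ — gives, for every $g \in G \setminus \{1\}$,
$$\|x^g\|_\varphi^2 \le \kappa \sum_{j=1}^k \|x^g a_j - \sigma_{g^{-1}}(a_j) x^g\|_\varphi^2.$$

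Next I would assemble these estimates using the Pythagorean decompositions in $\rL^2(M \rtimes_\sigma G)$ afforded by $\rE$: namely
$$\|x - \varphi(x)1\|_\varphi^2 = \|\rE(x) - \varphi(x)1\|_\varphi^2 + \sum_{g \in G \setminus \{1\}} \|x^g\|_\varphi^2$$
and, crucially,
$$\|x a_j - a_j x\|_\varphi^2 = \|\rE(x) a_j - a_j \rE(x)\|_\varphi^2 + \sum_{g \in G \setminus \{1\}} \|x^g a_j - \sigma_{g^{-1}}(a_j) x^g\|_\varphi^2$$
for each $j$. The second identity holds because conjugating the $g$-th Fourier component $u_g x^g$ of $x$ by $a_j \in M$ produces $u_g(x^g a_j - \sigma_{g^{-1}}(a_j)x^g)$, and the components for distinct $g$ are orthogonal in $\rL^2$ with respect to the state $\varphi \circ \rE$; here it matters that $a_j \in pMp$ so that the relevant vectors stay inside the substandard form $\rL^2(M) \subset \rL^2(M \rtimes_\sigma G)$ and the norms are genuinely $\|\cdot\|_\varphi$. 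Summing the two bounds above over $j$ and combining with these decompositions yields
$$\|x - \varphi(x)1\|_\varphi^2 \le \kappa \sum_{j=1}^k \|x a_j - a_j x\|_\varphi^2,$$
which (after relabeling $a_j x - x a_j$, of no consequence) is exactly the claimed inequality.

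I do not expect a serious obstacle here, since the argument is a strict simplification of the proof of Theorem \ref{crossed-product}: the only point requiring a little care is the bookkeeping of the substandard form — one must lift $\varphi$ to $\varphi \circ \rE$ on $M \rtimes_\sigma G$, check that $\rL^2(M)$ embeds isometrically as the closure of $M\xi_\varphi$ inside $\rL^2(M\rtimes_\sigma G)$ with the Fourier pieces $u_g \rL^2(M)$ mutually orthogonal, and verify that $a_j \in pMp$ keeps all the cross terms well-defined. The mild subtlety relative to the purely finite statement \eqref{eq-spectral-gap-finite} is that for the off-diagonal components one genuinely needs the \emph{twisted} spectral gap inequality \eqref{eq-spectral-gap-out-finite} rather than the untwisted \eqref{eq-spectral-gap-finite}, and this is where the discreteness of the image of $\sigma(G)$ in $\Out(M)$ — guaranteeing the existence of a neighborhood $\mathcal V$ with $\sigma(G) \cap \pi_M^{-1}(\mathcal V) = \{\id_M\}$ — is used, exactly as in Theorem \ref{crossed-product}. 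Everything else is routine.
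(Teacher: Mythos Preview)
Your proposal is correct and follows exactly the approach the paper intends: the paper does not write out a separate proof of Theorem \ref{crossed-product-finite} but simply says ``With a similar proof, we also obtain a semifinite version,'' and the similar proof is precisely the one you give --- the Fourier decomposition $x = \sum_g u_g x^g$, the application of \eqref{eq-spectral-gap-finite} to $\rE(x)$ and of \eqref{eq-spectral-gap-out-finite} to each $x^g$ with $g \neq 1$, and the Pythagorean identities in $\rL^2(M \rtimes_\sigma G)$, all carried over from the proof of Theorem \ref{crossed-product} with the $\inf_\lambda$ term deleted.
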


We end this section with an application which strengthens \cite[Theorem B]{Ma16}.
\begin{thm} \label{ultraproduct-crossed}
Let $M$ be any full $\sigma$-finite factor. Let $\sigma : G \rightarrow \Aut(M)$ be an outer action of a discrete group $G$ such that the image of $\sigma(G)$ in $\Out(M)$ is discrete. Then for any cofinal ultrafilter $\omega$ on any directed set $I$, we have $M' \cap (M \rtimes_\sigma G)^{\omega}=\C1$.
\end{thm}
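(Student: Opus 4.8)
The plan is to deduce Theorem \ref{ultraproduct-crossed} from the crossed-product spectral gap inequality of Theorem \ref{crossed-product} by passing to the ultrapower. Fix a state $\varphi \in M_\ast$, a family $a_1,\dots,a_k \in M$ and a constant $\kappa > 0$ satisfying the conclusion of Theorem \ref{strong-gap} for some neighborhood $\mathcal V$ of $1$ in $\Out(M)$ with $\sigma(G) \cap \pi_M^{-1}(\mathcal V) = \{\id_M\}$ (such $\mathcal V$ exists since the image of $\sigma(G)$ in $\Out(M)$ is discrete). Write $\mathcal N = M \rtimes_\sigma G$, equipped with the canonical conditional expectation $\rE : \mathcal N \to M$ and the lifted state $\varphi \circ \rE$, still denoted $\varphi$; then $M \subset \mathcal N$ is with expectation, so $M^\omega \subset \mathcal N^\omega$ is with expectation and $\varphi^\omega$ is a faithful normal state on $\mathcal N^\omega$ whose restriction to $M^\omega$ has $M$-central part implemented by $\xi_{\varphi^\omega}$. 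The inequality of Theorem \ref{crossed-product} holds for every element of $\mathcal N$, hence it passes verbatim to representing sequences, and since all the quantities involved ($\|\cdot\|_\varphi$-norms of commutators with $a_j$, $\|\cdot\|$-norms in $\rL^2$, the infimum over $\lambda \in \R_+$, and the Cauchy–Schwarz-controlled term $\varphi(x)$) are continuous under the ultralimit, one obtains
$$\| x - \varphi^\omega(x)1 \|_{\varphi^\omega}^2 \leq \kappa\left(\sum_{j=1}^k \|x a_j - a_j x\|_{\varphi^\omega}^2 + \inf_{\lambda \in \R_+} \|x \xi_{\varphi^\omega} - \lambda \xi_{\varphi^\omega} x\|^2\right)$$
for all $x \in \mathcal N^\omega$. (Here one should take a little care that $\kappa$ and the $a_j$ do not depend on the element, which is exactly the content of Theorem \ref{crossed-product}, so the passage to the ultrapower is unproblematic; alternatively one can argue with bounded sequences directly as in the proof of \cite[Theorem B]{Ma16}.)

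Now let $x \in M' \cap \mathcal N^\omega$. Then $x a_j = a_j x$ for all $j$, so the first term on the right vanishes. For the second term, note that $x$ commutes with $M$, in particular with every element of $M$, so $x \xi_{\varphi^\omega}$ and $\xi_{\varphi^\omega} x$ are related through the $M$-bimodule structure; taking $\lambda = 1$ and using that $\xi_{\varphi^\omega}$ implements the state $\varphi^\omega$ restricted to $M$ — more precisely, that $x \mapsto x \xi_{\varphi^\omega} - \xi_{\varphi^\omega} x$ vanishes on $M' \cap \mathcal N^\omega$ because an element commuting with $M$ also commutes with $JMJ$ when it lies in $\mathcal N^\omega$ and $\xi_{\varphi^\omega} \in \rL^2(M) \subset \rL^2(\mathcal N^\omega)$ is fixed by $M' \cap \mathcal N^\omega$ acting on both sides — we get $\|x\xi_{\varphi^\omega} - \xi_{\varphi^\omega} x\| = 0$ as well. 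Hence $\|x - \varphi^\omega(x)1\|_{\varphi^\omega} = 0$, and since $\varphi^\omega$ is faithful on $\mathcal N^\omega$ this forces $x = \varphi^\omega(x)1 \in \C 1$. Therefore $M' \cap \mathcal N^\omega = \C 1$.

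The step requiring the most care is the vanishing of the term $\inf_{\lambda} \|x \xi_{\varphi^\omega} - \lambda \xi_{\varphi^\omega} x\|$ for $x \in M' \cap \mathcal N^\omega$; the clean way to see it is to observe that for $x \in M' \cap \mathcal N^\omega$ and $\xi_{\varphi^\omega} \in \rL^2(M)$ viewed inside $\rL^2(\mathcal N^\omega)$ via $\rE$, one has $\xi_{\varphi^\omega} x = x \xi_{\varphi^\omega}$ exactly when $x$ is in the commutant of $M \vee JMJ$ — and since $x \in M' \cap \mathcal N^\omega$ and $\mathcal N^\omega$ is in standard form with $\rL^2(M)$ a sub-bimodule, $x$ does commute with the right $M$-action on $\rL^2(M)$, so $\lambda = 1$ works. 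One should double-check the compatibility of the standard forms (that $\rL^2(M) \subset \rL^2(\mathcal N^\omega)$ is a genuine sub-$M$-$M$-bimodule with $\xi_{\varphi^\omega}$ cyclic for $M$ and separating), which is standard given that $M \subset \mathcal N^\omega$ is with expectation. I would also remark explicitly that this result strengthens \cite[Theorem B]{Ma16}, where the conclusion $M' \cap \mathcal N^\omega = \C 1$ was obtained under the stronger hypothesis corresponding to Connes' original spectral gap (without the extra $\inf_\lambda$ term), and that the proof here is essentially the same modulo replacing Theorem 5.4 of \cite{Ma16} by Theorem \ref{crossed-product}.
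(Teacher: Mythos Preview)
Your argument has a genuine gap at exactly the point you flag as delicate: the claim that for $x \in M' \cap \mathcal N^\omega$ one has $x\xi_{\varphi^\omega} = \xi_{\varphi^\omega} x$ (i.e.\ that $\lambda = 1$ kills the second term). Recall that by definition $\xi_{\varphi^\omega} x = J^{\mathcal N^\omega} x^{*} J^{\mathcal N^\omega}\,\xi_{\varphi^\omega}$, so the equality $x\xi_{\varphi^\omega} = \xi_{\varphi^\omega} x$ is \emph{equivalent} to $x \in (\mathcal N^\omega)_{\varphi^\omega}$. Nothing in the hypothesis $x \in M' \cap \mathcal N^\omega$ forces this: your justification conflates the right $M$-action on $\rL^2(M)$ with the right $\mathcal N^\omega$-action on $\rL^2(\mathcal N^\omega)$. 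The operator $J^{\mathcal N^\omega} x^{*} J^{\mathcal N^\omega}$ is the right action of $x$ itself, not of any element of $M$, and it need not preserve $\rL^2(M)$; the fact that $x$ commutes with $M$ and with $J^{\mathcal N^\omega} M J^{\mathcal N^\omega} \subset (\mathcal N^\omega)'$ is automatic and says nothing about $\sigma_t^{\varphi^\omega}(x)$. All one knows a priori is that $M' \cap \mathcal N^\omega$ is globally $\sigma^{\varphi^\omega}$-invariant, not pointwise fixed.

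The paper handles this by a spectral argument rather than by forcing $\lambda = 1$. Assuming $M' \cap \mathcal N^\omega \neq \C 1$, one restricts $\Delta_{\varphi^\omega}^{1/2}$ to the nonzero invariant subspace $\rL^2(M' \cap \mathcal N^\omega) \ominus \C\xi_{\varphi^\omega}$, picks some $\lambda > 0$ in its spectrum, and finds $Z \in M' \cap \mathcal N^\omega$ with $\varphi^\omega(Z)=0$, $\|Z\|_{\varphi^\omega}=1$ and $\|\xi_{\varphi^\omega} Z - \lambda Z \xi_{\varphi^\omega}\| < \varepsilon$. One then pulls $Z$ back to an element $z \in \mathcal N$ that violates the inequality of Theorem~\ref{crossed-product} for $\varepsilon$ small enough. (The paper also treats the semifinite case separately via Theorem~\ref{crossed-product-finite}, where the $\inf_\lambda$ term is absent and your direct approach would indeed go through.) The point is that the extra term in Theorem~\ref{crossed-product} genuinely requires producing \emph{some} $\lambda$, and the modular operator on the relative commutant supplies it; you cannot simply set $\lambda = 1$.
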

\begin{proof}
When $M$ is a semifinite, Theorem \ref{crossed-product-finite} shows that for any $Z \in  M' \cap (M \rtimes_\sigma G)^\omega $ we have $Zp \in \C p$ for all finite projections $p \in M$. Hence $Z \in \C1$. Therefore $M' \cap (M \rtimes_\sigma G)^\omega=\C 1$.

Now assume that $M$ is of type ${\rm III}$. Choose a faithful state $\varphi \in M_\ast$, $\kappa > 0$ and $a_1,\dots,a_k \in M$ which satisfy the conclusion of Theorem \ref{crossed-product}. Assume by contradiction that $ M' \cap (M \rtimes_\sigma G)^\omega \neq \C 1$. Consider the closed positive definite operator 
$$S = \Delta^{1/2}_{\varphi^\omega} |_{\rL^2(M' \cap (M \rtimes_\sigma G)^\omega) \ominus \C \xi_{\varphi^{\omega}}}$$ 
densily defined on the nonzero Hilbert space $\rL^2(M' \cap (M \rtimes_\sigma G)^\omega) \ominus \C \xi_{\varphi^{\omega}}$. Take some $\lambda > 0$ in the spectrum of $S$. Then, for any $\varepsilon > 0$, we can find $Z \in M' \cap (M \rtimes_\sigma G)^\omega$ such that $\|Z\|_{\varphi^\omega} = 1$, $\varphi^\omega(Z) = 0$ and $\|\xi_{\varphi^\omega} Z - \lambda Z \xi_{\varphi^\omega}\| < \varepsilon$. Hence we can find $z \in M \rtimes_\sigma G$ which satisfies $\|z\|_{\varphi} = 1$, $\varphi(z) = 0$, $\| z \xi_{\varphi} - \lambda^{-1} \xi_{\varphi} z\| \leq 2 \lambda^{-1} \varepsilon$ and $\|z a_j - a_j z\|_{\varphi} \leq \varepsilon$ for every $j \in \{1, \dots, k\}$. This however contradicts Theorem \ref{crossed-product} if $\varepsilon > 0$ is small enough. Therefore, we must have $M' \cap (M \rtimes_\sigma G)^\omega = \C 1$.
\end{proof}

Note that Theorem \ref{ultraproduct-crossed}, applied to $G=\{1\}$ the trivial group, gives the following corollary, which was previously only known in the separable case \cite{AH12} with a very different proof.

\begin{cor} \label{central-sequence}
Let $M$ be any $\sigma$-finite full factor. Then we have $M' \cap M^{\omega}=\C 1$ for any cofinal ultrafilter $\omega$ on any directed set $I$.
\end{cor}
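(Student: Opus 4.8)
The plan is to deduce Corollary \ref{central-sequence} directly from Theorem \ref{ultraproduct-crossed} by specializing to the trivial group. First I would observe that the trivial group $G = \{1\}$ acts on $M$ by the unique action $\sigma$ with $\sigma(1) = \id_M$, which is trivially outer, and that its image in $\Out(M)$ is the trivial subgroup $\{1\}$, which is obviously discrete. Then the crossed product $M \rtimes_\sigma \{1\}$ is canonically identified with $M$ itself, and under this identification the canonical conditional expectation $\rE \colon M \rtimes_\sigma \{1\} \to M$ is just the identity. Applying Theorem \ref{ultraproduct-crossed} with this data, for any cofinal ultrafilter $\omega$ on any directed set $I$ we obtain $M' \cap (M \rtimes_\sigma \{1\})^\omega = \C 1$, which under the identification $M \rtimes_\sigma \{1\} = M$ reads $M' \cap M^\omega = \C 1$. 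This is exactly the assertion of the corollary.

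The only mild subtlety worth spelling out is that Theorem \ref{ultraproduct-crossed} requires $M$ to be $\sigma$-finite, which is part of the hypothesis of Corollary \ref{central-sequence}, so there is nothing extra to check there. I would also note in passing that the type decomposition inside the proof of Theorem \ref{ultraproduct-crossed} (semifinite case via Theorem \ref{crossed-product-finite}, type $\III$ case via Theorem \ref{crossed-product}) is already handled internally, so no case analysis is needed at the level of the corollary. In practice, the ``proof'' is a single sentence invoking Theorem \ref{ultraproduct-crossed} with $G$ trivial, together with the remark that a full factor $M$ automatically admits no nontrivial outer automorphisms coming from the trivial group, so the discreteness hypothesis is vacuous.

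I do not expect any genuine obstacle here, since all the analytic work — the enhanced spectral gap inequalities of Theorem \ref{strong-gap} and Theorem \ref{spectral-gap-finite}, and the ultraproduct argument with the relative modular operator — has already been carried out in Theorem \ref{ultraproduct-crossed}. If anything, the point of stating this as a separate corollary is expository: it recovers, with a uniform and self-contained argument valid for arbitrary directed index sets $I$ and cofinal ultrafilters $\omega$, the known fact \cite{AH12} that $\sigma$-finite full factors have trivial relative commutant in their Ocneanu ultrapower, a result that previously required the separability assumption and a different method. So the write-up amounts to recording the specialization $G = \{1\}$ and the canonical identification $M \rtimes_\sigma \{1\} \cong M$.
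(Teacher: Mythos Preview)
Your proposal is correct and is exactly the paper's own argument: the corollary is stated immediately after Theorem \ref{ultraproduct-crossed} with the remark that it follows by taking $G=\{1\}$, and no separate proof is given. Your observation that the outerness and discreteness hypotheses are vacuous for the trivial group is precisely the point.
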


\section{Proofs of Theorem \ref{main-thm-full-tensor-products} and Corollary \ref{cor-full-tensor-products}.}

In this section, we show how to pass from a spectral gap in a full factor $M$ to a relative spectral gap in a tensor product $M \ovt N$, where $N$ is any von Neumann algebra. For this, we will need the following technical lemma which explains why the strengthening of the spectral gap property that we obtained in the previous section is crucial.

\begin{lem}\label{lem-technical}
Let $H$ be any Hilbert space and $S : \mathcal D(S) \to H$ any closed positive densily defined operator. Let $A \in \mathbf B(H)$ be any selfadjoint bounded operator which satisfies
$$\forall \xi \in \mathcal{D}(S), \quad \langle A \xi, \xi \rangle \leq \inf_{\lambda \in \R_+} \| (\lambda S-1) \xi \|^2.$$
Then for every Hilbert space $K$ and every closed positive densily defined operator $T : \mathcal D(T) \to K$, we have
$$\forall \eta \in \mathcal D(S \otimes T), \quad \langle (A \otimes 1) \eta, \eta \rangle \leq  \| (S \otimes T - 1) \eta \|^2.$$
\end{lem}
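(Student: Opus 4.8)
The plan is to reduce the statement to a purely elementary estimate obtained by tensoring the hypothesis with an identity operator, the reduction being carried out through the spectral calculus of $T$.

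\textbf{Step 1: a fiber inequality.} First I would observe that the hypothesis says exactly that for \emph{every fixed} $\lambda\in\R_+$ and every $\xi\in\mathcal{D}(S)$ one has $\langle A\xi,\xi\rangle\le\|(\lambda S-1)\xi\|^2$; in particular, taking $\lambda=0$, one gets $A\le 1_H$. I would then prove the following: for every Hilbert space $L$ and every $\lambda\in\R_+$,
\[
\langle (A\otimes 1_L)\zeta,\zeta\rangle\le\bigl\|\bigl((\lambda S-1)\otimes 1_L\bigr)\zeta\bigr\|^2\qquad\text{for all }\zeta\in\mathcal{D}(S\otimes 1_L).
\]
Since $(\lambda S-1)\otimes 1_L=\lambda(S\otimes 1_L)-1$ and $A\otimes 1_L$ are both bounded relative to the graph norm of $S\otimes 1_L$, the two sides are continuous in that graph norm, so it is enough to verify the inequality on the core $\mathcal{D}(S)\odot L$. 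Writing $\zeta=\sum_{i=1}^{n}\xi_i\otimes e_i$ with the $e_i\in L$ orthonormal and $\xi_i\in\mathcal{D}(S)$, orthonormality decouples everything: $\langle(A\otimes 1_L)\zeta,\zeta\rangle=\sum_i\langle A\xi_i,\xi_i\rangle$ and $\|((\lambda S-1)\otimes 1_L)\zeta\|^2=\sum_i\|(\lambda S-1)\xi_i\|^2$, so the inequality follows termwise from the reformulated hypothesis.

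\textbf{Step 2: reduction to bounded, boundedly invertible $T$.} Let $q_n$ be the spectral projection of $T$ for the interval $[1/n,n]$ and put $Q_n=1_H\otimes q_n$. Since $q_n$ is a bounded function of $T$, the projection $Q_n$ commutes strongly with both $S\otimes 1$ and $1\otimes T$, hence with $S\otimes T$; in particular it preserves $\mathcal{D}(S\otimes T)$, one has $\|(S\otimes T-1)Q_n\eta\|=\|Q_n(S\otimes T-1)\eta\|\le\|(S\otimes T-1)\eta\|$, and $Q_n\eta\to\eta$ gives $\langle(A\otimes 1)Q_n\eta,Q_n\eta\rangle\to\langle(A\otimes 1)\eta,\eta\rangle$. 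Thus it suffices to prove the inequality for each $Q_n\eta$, i.e.\ one may assume that $T$ is bounded and $T\ge\varepsilon$ for some $\varepsilon>0$. Then $1\otimes T$ is bounded and invertible, so $S\otimes T=(S\otimes 1)(1\otimes T)$ has domain $\mathcal{D}(S\otimes 1_K)$; in particular the given $\eta$ lies in $\mathcal{D}(S\otimes 1_K)$.

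\textbf{Step 3: approximation and conclusion.} With $T$ bounded and $\ge\varepsilon$, choose simple Borel functions $f_m\ge\varepsilon$ on $\sigma(T)$ with $\|f_m-\mathrm{id}\|_\infty\to 0$ and set $T_m=f_m(T)=\sum_j\lambda_j^{(m)}F(\Delta_j^{(m)})$, where $F$ is the spectral measure of $T$ and $(\Delta_j^{(m)})_j$ is a finite Borel partition of $\sigma(T)$. Since $\eta\in\mathcal{D}(S\otimes 1)$, we have $(S\otimes T)\eta-(S\otimes T_m)\eta=(1\otimes(T-T_m))(S\otimes 1)\eta$, whose norm is at most $\|T-T_m\|\,\|(S\otimes 1)\eta\|\to0$, so $\|(S\otimes T-1)\eta\|^2=\lim_m\|(S\otimes T_m-1)\eta\|^2$. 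For fixed $m$, the projections $P_j=1\otimes F(\Delta_j^{(m)})$ are mutually orthogonal, sum to $1$, commute with $A\otimes 1$, $S\otimes 1$ and $S\otimes T_m$, and on $\ran(P_j)=H\otimes F(\Delta_j^{(m)})K$ one has $S\otimes T_m=\lambda_j^{(m)}\bigl(S\otimes 1_{L_j}\bigr)$ with $L_j=F(\Delta_j^{(m)})K$. Applying the fiber inequality of Step 1 (with $L=L_j$ and $\lambda=\lambda_j^{(m)}$) to $P_j\eta\in\mathcal{D}(S\otimes 1_{L_j})$ and summing over $j$ gives
\[
\langle(A\otimes 1)\eta,\eta\rangle=\sum_j\langle(A\otimes 1)P_j\eta,P_j\eta\rangle\le\sum_j\bigl\|\bigl(\lambda_j^{(m)}(S\otimes 1_{L_j})-1\bigr)P_j\eta\bigr\|^2=\|(S\otimes T_m-1)\eta\|^2
\]
for every $m$, whence $\langle(A\otimes 1)\eta,\eta\rangle\le\lim_m\|(S\otimes T_m-1)\eta\|^2=\|(S\otimes T-1)\eta\|^2$, as desired.

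\textbf{Main obstacle.} The only genuine difficulty is the simultaneous unboundedness of $S$ and $T$, which blocks any one-line manipulation; the device that removes it is the reduction in Step 2 to bounded and boundedly invertible $T$, after which $\eta\in\mathcal{D}(S\otimes 1)$ and the operator-norm approximation $T_m\to T$ can be transported through $S\otimes\,\cdot\,$. The other points requiring a little care are the domain and strong-commutation statements for $Q_n$ and $P_j$ relative to the commuting pair $(S\otimes 1,\ 1\otimes T)$ and its product $S\otimes T$ (alternatively, one can phrase Steps 2--3 at once via a direct integral decomposition of $K$ over $\sigma(T)$, where $S\otimes T$ becomes the decomposable operator with fiber $t\,(S\otimes 1_{K_t})$). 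Everything else, in particular the fiber inequality and the termwise estimate, is elementary.
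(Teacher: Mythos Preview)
Your argument is correct and follows the same idea as the paper's: diagonalize $T$ via its spectral theory and apply the hypothesis fiberwise with $\lambda$ equal to the spectral parameter. The paper does this in one stroke by embedding the von Neumann algebra generated by $T$ in a maximal abelian $\rL^\infty(X,\mu)\subset\mathbf B(K)$, so that $T=M_f$ for some measurable $f:X\to\R_+$, and identifying $H\otimes K=\rL^2(X,\mu,H)$; the inequality on $\mathcal D(S)\odot\mathcal D(T)$ then comes from integrating $\langle A\eta_x,\eta_x\rangle\le\|(f(x)S-1)\eta_x\|^2$ over $X$, and one extends to $\mathcal D(S\otimes T)$ by closure. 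This is precisely the direct-integral alternative you mention in your last paragraph, so your spectral-projection/simple-function implementation is just a discretized version of the same computation, and the paper's route is arguably shorter since it avoids the two-stage reduction of Steps~2--3.

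One small patch is needed in Step~2: the projections $q_n=E_T([1/n,n])$ converge strongly only to $1-q_0$, where $q_0$ is the projection onto $\ker T$, so $Q_n\eta\to\eta$ can fail when $\ker T\neq 0$. Split off $(1\otimes q_0)\eta$ first (on that piece $S\otimes T$ vanishes, so the desired inequality reduces to $A\le 1_H$, which you already recorded), and then run your argument on $(1\otimes(1-q_0))\eta$.
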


\begin{proof}
Write $A_0 = \{T^{{\rm i}t} : t \in \R\}$ for the von Neumann subalgebra of $\mathbf B(K)$ generated by $T$. Choose an intermediate maximal abelian subalgebra $A_0 \subset A \subset \mathbf B(K)$. Then we can identify $\left( A \subset \mathbf B(K) \right) = \left( \rL^\infty(X, \mu) \subset \mathbf B(\rL^2(X, \mu)) \right)$ for some measure space $(X, \mu)$. Since $T$ is affiliated with $A = \rL^\infty(X, \mu)$, the spectral theorem for unbounded operators (see e.g.\ \cite[Theorem 5.6.4]{KR97}) shows that we can assume that $T = M_f$ acts by multiplication on $\rL^2(X, \mu)$ by some measurable function $f: (X,\mu) \rightarrow \R_+$.
Then
$$\mathcal{D}(T)=\left\{ g \in \rL^2(X,\mu) : \int_X |fg|^2 \, {\rm d}\mu < +\infty \right\}.$$
Let $\eta=\sum_{i = 1}^p  \xi_i \otimes g_i$ be any element in the algebraic tensor product $\mathcal{D}(S) \odot \mathcal{D}(T)$. Identify $H \otimes K = \rL^2(X, \mu, H)$ and regard $\eta : X \to H : x \mapsto \eta_x=\sum_{i = 1}^p g_i(x)\xi_i$. Observe that $\eta_x \in \mathcal D(S)$ for $\mu$-almost every $x \in X$ and that the map $X \to \R_+ : x \mapsto \| ( f(x)S-1)\eta_x\|^2$ is measurable. Then a simple computation shows that
$$\langle (A \otimes 1) \eta, \eta \rangle = \int_{x \in X} \langle A \eta_x,\eta_x \rangle \, {\rm d}\mu(x)$$
and 
$$ \| (S \odot T - 1)\eta \|^2= \int_{x \in X} \| (f(x)S-1)\eta_x\|^2 \, {\rm d}\mu(x).$$
Hence, we have
$$\forall \eta \in \mathcal{D}(S) \odot \mathcal{D}(T), \quad \langle (A \otimes 1) \eta, \eta \rangle \leq  \| (S \odot T -1) \eta \|^2.$$

Since $S \otimes T$ is the closure of $S \odot T$, for every $\eta \in \mathcal{D}(S \otimes T)$, we can find a sequence $\eta_n \in \mathcal{D}(S) \odot \mathcal{D}(T)$ such that $\eta_n \rightarrow \eta$ and $(S \odot T)\eta_n \rightarrow (S \otimes T)\eta$ as $n \to \infty$. For every $n \in \N$, we have
$$ \langle (A \otimes 1) \eta_n, \eta_n \rangle \leq \| (S \odot T -1) \eta_n \|^2.$$
Therefore we obtain
$$\langle (A \otimes 1) \eta, \eta \rangle \leq \| (S \otimes T -1) \eta \|^2$$
as we wanted.
\end{proof}

\begin{thm}\label{cor-strong-gap}
Let $M$ be any full factor. Choose a state $\varphi \in M_\ast$, a family $a_1,\dots,a_k \in M$ and $\kappa > 0$ which satisfy the conclusion of Theorem \ref{strong-gap}. Let $N$ be any von Neumann algebra  and $\psi \in N_\ast$ be any state. Denote by $\rE_\varphi=\varphi \otimes \id_N : M \ovt N \to N$ the normal conditional expectation induced by $\varphi$. 

Then for all $z \in M \ovt N$ and all $\beta \in \Aut(N)$ we have
\begin{align}\label{eq-cor-spectral-gap}
\|z - \rE_\varphi(z)\|_{\varphi \otimes \psi}^2 \leq \kappa \left( \sum_{j = 1}^k \| za_j -a_j z \|_{\varphi \otimes \psi}^2 + \inf_{\lambda \in \R_+}\|z(\xi_{\varphi} \otimes {\xi_\psi}) - \lambda (\xi_{\varphi} \otimes \beta(\xi_{\psi})) z \|^2 \right)
\end{align}
and for all $z \in M \ovt N$, all $\alpha \in \Aut(M) \setminus \pi_M^{-1}(\mathcal V)$ and all $\beta \in \Aut(N)$ we have
\begin{align}\label{eq-cor-spectral-gap-out}
\| z \|_{\varphi \otimes \psi}^2 \leq \kappa \left( \sum_{j = 1}^k \| z a_j - \alpha(a_j) z \|_{\varphi \otimes \psi}^2 + \inf_{\lambda \in \R_+} \left\|z (\xi_\varphi \otimes \xi_\psi) - \lambda(\alpha(\xi_\varphi) \otimes \beta(\xi_\psi))z \right\|^{2} \right).
\end{align}
\end{thm}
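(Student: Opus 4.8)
The plan is to rephrase each of the two inequalities of Theorem~\ref{strong-gap} as an operator estimate of the exact shape fed to Lemma~\ref{lem-technical}, then to tensorize by a relative modular operator of $(N,\psi)$ and translate the resulting estimate on $\rL^2(M)\ovt\rL^2(N)=\rL^2(M\ovt N)$ back to $M\ovt N$. Two harmless normalizations first: enlarging $\kappa$ we assume $\kappa\ge 1$, and replacing each $a_j$ by $\mathrm{supp}(\varphi)\,a_j\,\mathrm{supp}(\varphi)$, which does not alter $\xi_j=a_j\xi_\varphi=\xi_\varphi a_j^*$, we assume $a_j\in\Ball(\mathrm{supp}(\varphi)M\mathrm{supp}(\varphi))$.

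For \eqref{eq-cor-spectral-gap}, put $S=\Delta_\varphi^{1/2}$ on $\rL^2(M)$, so that $S(x\xi_\varphi)=\xi_\varphi x$ for $x\in M$ by the description of $\Delta_{\psi,\varphi}^{1/2}$ recalled in the preliminaries; let $P$ be the projection of $\rL^2(M)$ onto $\C\xi_\varphi$; let $B_j\in\mathbf B(\rL^2(M))$ be $\eta\mapsto a_j\eta-\eta a_j^*$ (bounded since $\eta a_j^*=Ja_jJ\eta$); and set $A=\tfrac1\kappa(1-P)-\sum_jB_j^*B_j$, a bounded selfadjoint operator. For $x\in M$, with $\eta=x\xi_\varphi$, one checks $\|x-\varphi(x)1\|_\varphi^2=\langle(1-P)\eta,\eta\rangle$, $\|xa_j-a_jx\|_\varphi^2=\|B_j\eta\|^2$ and $\|x\xi_\varphi-\lambda\xi_\varphi x\|^2=\|(\lambda S-1)\eta\|^2$, so \eqref{eq-spectral-gap-bis} reads exactly $\langle A\eta,\eta\rangle\le\inf_{\lambda\in\R_+}\|(\lambda S-1)\eta\|^2$ for all $\eta\in M\xi_\varphi$. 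I would then extend this to all $\xi\in\mathcal D(S)$: the subspace $\rL^2(M)\mathrm{supp}(\varphi)^\perp$ is orthogonal to $M\xi_\varphi$ and is killed by $S$, $P$ and the $B_j$ (using $a_j\in\mathrm{supp}(\varphi)M\mathrm{supp}(\varphi)$), so on the defining core $\rL^2(M)\mathrm{supp}(\varphi)^\perp\oplus M\xi_\varphi$ of $S$ the estimate holds because $\kappa\ge1$, and it then passes to $\mathcal D(S)$ by boundedness of $A$ together with closedness of $S$ (bound by a fixed $\lambda$, pass to the limit along the core, then take the infimum over $\lambda$).

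Now apply Lemma~\ref{lem-technical} to $S$, $A$ and to the operator $\mu\,\Delta_{\beta(\psi),\psi}^{1/2}$ for each $\mu\in\R_+$; since $S\otimes(\mu T_0)=\mu(S\otimes T_0)$ for $T_0=\Delta_{\beta(\psi),\psi}^{1/2}$, taking the infimum over $\mu$ gives $\langle(A\otimes1)\eta,\eta\rangle\le\inf_{\mu\in\R_+}\|(\mu(S\otimes T_0)-1)\eta\|^2$ for every $\eta\in\mathcal D(S\otimes T_0)$. By the tensor-product formula $\Delta_{\varphi\otimes\beta(\psi),\varphi\otimes\psi}=\Delta_\varphi\otimes\Delta_{\beta(\psi),\psi}$ and $\xi_{\varphi\otimes\psi}=\xi_\varphi\otimes\xi_\psi$ from the preliminaries, the operator $S\otimes T_0$ is $\Delta_{\varphi\otimes\beta(\psi),\varphi\otimes\psi}^{1/2}$ and it sends $z\xi_{\varphi\otimes\psi}$ to $(\xi_\varphi\otimes\beta(\xi_\psi))z$; moreover $(P\otimes1)(z\xi_{\varphi\otimes\psi})=\rE_\varphi(z)\xi_{\varphi\otimes\psi}$ and $(B_j\otimes1)(z\xi_{\varphi\otimes\psi})=(a_jz-za_j)\xi_{\varphi\otimes\psi}$, where $a_j$ now denotes $a_j\otimes1\in M\ovt N$. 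Evaluating the tensor inequality at $\eta=z\xi_{\varphi\otimes\psi}$, using $\|z\|_{\varphi\otimes\psi}^2=\|z-\rE_\varphi(z)\|_{\varphi\otimes\psi}^2+\|\rE_\varphi(z)\|_{\varphi\otimes\psi}^2$ (valid since $(\varphi\otimes\psi)\circ\rE_\varphi=\varphi\otimes\psi$) and multiplying through by $\kappa$ yields \eqref{eq-cor-spectral-gap}. For \eqref{eq-cor-spectral-gap-out} one runs the same argument starting from \eqref{eq-spectral-gap-out-bis}, with $S=\Delta_{\alpha(\varphi),\varphi}^{1/2}$ (so $S(x\xi_\varphi)=\alpha(\xi_\varphi)x$), with $B_j\colon\eta\mapsto\alpha(a_j)\eta-\eta a_j^*$, and with the projection $P$ removed from $A$ (as the left-hand side to reproduce is $\|z\|_{\varphi\otimes\psi}^2$); then $S\otimes T_0=\Delta_{\alpha(\varphi)\otimes\beta(\psi),\varphi\otimes\psi}^{1/2}$ sends $z\xi_{\varphi\otimes\psi}$ to $(\alpha(\xi_\varphi)\otimes\beta(\xi_\psi))z$, $(B_j\otimes1)(z\xi_{\varphi\otimes\psi})=(\alpha(a_j)z-za_j)\xi_{\varphi\otimes\psi}$ with $\alpha$ read as $\alpha\otimes\id_N$ on $M\ovt N$, and the same bookkeeping gives \eqref{eq-cor-spectral-gap-out}.

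All the analytic content sits in Lemma~\ref{lem-technical}; the rest is the translation just described. The two points needing care are: (i) correctly identifying the closed operators $\Delta_\varphi^{1/2}$, $\Delta_{\alpha(\varphi),\varphi}^{1/2}$, $\Delta_{\beta(\psi),\psi}^{1/2}$ and their tensor products, so that $\mu(S\otimes T_0)$ genuinely matches the term $\lambda(\xi_\varphi\otimes\beta(\xi_\psi))z$ in the statement; and (ii) the passage of the hypothesis of Lemma~\ref{lem-technical} from $M\xi_\varphi$ to all of $\mathcal D(S)$. Point (ii) is immediate when $\varphi$ (and $\psi$) are faithful, the case that matters for the applications; in general it is the genuinely delicate step, relying on the orthogonality of $\rL^2(M)\mathrm{supp}(\varphi)^\perp$ with $M\xi_\varphi$, the vanishing of the $B_j$ on that subspace, and the normalization $\kappa\ge1$.
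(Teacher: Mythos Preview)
Your proof is correct and follows essentially the same route as the paper: rewrite each inequality of Theorem~\ref{strong-gap} as an operator estimate $\langle A\xi,\xi\rangle\le\inf_{\lambda}\|(\lambda S-1)\xi\|^2$ on $\mathcal D(S)$, then apply Lemma~\ref{lem-technical} with $T=\mu\,\Delta_{\beta(\psi),\psi}^{1/2}$ (and take the infimum over $\mu$), identify $S\otimes T$ as the relevant relative modular operator on $\rL^2(M\ovt N)$, and evaluate at $\eta=z\,\xi_{\varphi\otimes\psi}$.

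One small inaccuracy in your handling of the non-faithful case: the operator $B_j=a_j-Ja_jJ$ does \emph{not} kill $\rL^2(M)\mathrm{supp}(\varphi)^\perp$; only the right action $Ja_jJ$ vanishes there, while the left action $a_j$ need not. This is harmless, however: on that subspace one still has $\langle A\xi,\xi\rangle\le\tfrac1\kappa\|\xi\|^2\le\|\xi\|^2=\|(\lambda S-1)\xi\|^2$ thanks to your normalization $\kappa\ge1$, and since $B_j$ (hence $A$) preserves the orthogonal decomposition $\rL^2(M)\mathrm{supp}(\varphi)^\perp\oplus\overline{M\xi_\varphi}$, the estimates on the two pieces add to give the estimate on the core. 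The paper avoids this fiddling by defining $A=\tfrac1\kappa(J\,\mathrm{supp}(\varphi)J-P_{\C\xi_\varphi})-\sum_jB_j^*B_j$ instead of your $\tfrac1\kappa(1-P)-\sum_jB_j^*B_j$; with $J\,\mathrm{supp}(\varphi)J$ in place of $1$ one gets $\langle A\xi,\xi\rangle\le0$ on $\rL^2(M)\mathrm{supp}(\varphi)^\perp$ directly, without assuming $\kappa\ge1$, and in the end $(J\,\mathrm{supp}(\varphi)J\otimes1)$ acts as the identity on $(M\ovt N)\xi_{\varphi\otimes\psi}$ so the conclusion is the same.
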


\begin{proof}
{ \bf Proof of the first item \eqref{eq-cor-spectral-gap}. } Let $S=\Delta_\varphi^{1/2}$ and let $p$ be the support of $\varphi$. Then \eqref{eq-spectral-gap-bis} implies that 
$$\forall \xi \in \mathcal D(S), \quad \langle A \xi, \xi \rangle \leq \inf_{\lambda \in \R_+} \| (\lambda S-1)\xi \|^2,$$
where
$$ A=\frac{1}{\kappa}(JpJ - P_{\C \xi_\varphi})-\sum_{j = 1}^k |a_j-Ja_jJ|^{2}.$$
Indeed, it is enough to check this inequality for $\xi$ of the form $x \xi_\varphi$ with $x \in Mp$.

Now, fix $\beta \in \Aut(N)$ any automorphism of $N$ and let $T=\Delta^{1/2}_{\beta(\psi),\psi}$. Now, for every $\lambda \in \R_+$, we apply Lemma \ref{lem-technical} to $S$ and $\lambda T$ and we obtain
$$\forall \eta \in \mathcal D(S \otimes T), \quad \langle (A \otimes 1) \eta, \eta \rangle \leq \inf_{\lambda \in \R_+} \| (\lambda S \otimes T -1)\eta\|^2.$$
Since $S \otimes T=\Delta_{\varphi \otimes \beta(\psi),\varphi \otimes \psi}^{1/2}$  and since the orthogonal projection $P_{\C \xi_\varphi} \otimes 1 : \rL^2(M \ovt N) \to \C \xi_\varphi \otimes \rL^2(N)$ satisfies $P_{\C \xi_\varphi}(z (\xi_{\varphi} \otimes \xi_{\psi})) = \xi_\varphi \otimes \rE_\varphi(z)\xi_\psi$ for all $z \in M \ovt N$, we finally obtain \eqref{eq-cor-spectral-gap}.

{ \bf Proof of the second item \eqref{eq-cor-spectral-gap-out}. } Fix $\alpha \in \Aut(M) \setminus \pi_M^{-1}(\mathcal V)$. Let $S=\Delta^{1/2}_{\alpha(\varphi),\varphi}$. Then \eqref{eq-spectral-gap-out-bis} implies that 
$$\forall \xi \in \mathcal D(S), \quad \langle A \xi, \xi \rangle \leq \inf_{\lambda \in \R_+} \| (\lambda S-1)\xi \|^2,$$
where
$$ A=\frac{1}{\kappa}JpJ - \sum_{j = 1}^k |Ja_jJ-\alpha(a_j)|^{2}.$$
Fix $\beta \in \Aut(N)$ any automorphism. Let $T=\Delta^{1/2}_{\beta(\psi),\psi}$. We now apply Lemma \ref{lem-technical} and we obtain
$$\forall \eta \in \mathcal D(S \otimes T), \quad \langle (A \otimes 1) \eta, \eta \rangle \leq \inf_{\lambda \in \R_+} \| (\lambda S \otimes T -1)\eta\|^2.$$
Since $S \otimes T=\Delta_{\alpha(\varphi) \otimes \beta(\psi),\varphi \otimes \psi}^{1/2}$ we finally obtain \eqref{eq-cor-spectral-gap-out}.
\end{proof}

We also note that we can obtain a much simpler statement in the semifinite case.

\begin{thm}\label{cor-gap-finite}
Let $M$ be any full semifinite factor. Let $\mathcal{V}$ be any neighborhood of $1$ in $\Out(M)$. Let $p \in M$ be any nonzero finite projection. Take $\varphi$, $\kappa > 0$ and $a_1,\dots,a_k$ as in Theorem \ref{spectral-gap-finite}. Let $N$ be any von Neumann algebra  and $\psi \in N_\ast$ be any state. Denote by $\rE_\varphi=\varphi \otimes \id_N : M \ovt N \to N$ the normal conditional expectation induced by $\varphi$. Then for all $z \in M \ovt N$ we have
\begin{align}\label{eq-cor-gap-finite}
\|z - \rE_\varphi(z)\|_{\varphi \otimes \psi}^2 \leq \kappa \sum_{j = 1}^k \| za_j-a_j z \|_{\varphi \otimes \psi}^2
\end{align}
and for all $z \in M \ovt N$ and all $\alpha \in \Aut(M) \setminus \pi_M^{-1}(\mathcal V)$ we have
\begin{align}\label{eq-cor-gap-out-finite}
\| z \|_{\varphi \otimes \psi}^2 \leq \kappa \sum_{j = 1}^k \| z a_j - \alpha(a_j)z \|_{\varphi \otimes \psi}^2.
\end{align}
\end{thm}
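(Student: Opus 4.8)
The plan is to run the argument of Theorem \ref{cor-strong-gap} essentially verbatim, but feeding in the \emph{sharper} semifinite gap of Theorem \ref{spectral-gap-finite} in place of Theorem \ref{strong-gap}. Since the semifinite gap carries no infimum term, the whole mechanism collapses to a plain operator-positivity statement and Lemma \ref{lem-technical} is no longer needed. First I would record two elementary facts coming from $p = \supp(\varphi)$ and $\varphi = p\tau p$: because $\varphi|_{pMp}$ is a faithful tracial state, each $a_j \in (pMp)_+$ lies in the centralizer $M_\varphi$, so that $a_j\xi_\varphi = \xi_\varphi a_j = Ja_jJ\,\xi_\varphi$; and $\xi_\varphi$ is cyclic for $M$ acting on $JpJ\,\rL^2(M)$ (being separating there for the relative commutant $J(pMp)J$, by faithfulness of $\varphi|_{pMp}$), so that $\overline{M\xi_\varphi} = \ran(JpJ)$.

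Next comes the operator reformulation. I would introduce the bounded selfadjoint operators on $\rL^2(M)$
\[
A = \frac1\kappa\big(JpJ - P_{\C\xi_\varphi}\big) - \sum_{j=1}^k|a_j - Ja_jJ|^2, \qquad A_\alpha = \frac1\kappa JpJ - \sum_{j=1}^k|Ja_jJ - \alpha(a_j)|^2
\]
for $\alpha \in \Aut(M)\setminus\pi_M^{-1}(\mathcal V)$. Using $JpJ\,x\xi_\varphi = x\xi_\varphi$ and $(a_j - Ja_jJ)x\xi_\varphi = (a_jx - xa_j)\xi_\varphi$, inequalities \eqref{eq-spectral-gap-finite} and \eqref{eq-spectral-gap-out-finite} read precisely as $\langle A\,\xi,\xi\rangle\le 0$ and $\langle A_\alpha\,\xi,\xi\rangle\le 0$ for $\xi$ in the dense subspace $M\xi_\varphi$ of $\ran(JpJ)$, hence, by boundedness of $A$ and $A_\alpha$, on all of $\ran(JpJ)$. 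On the complement $\ran(JpJ)^\perp$ the terms $JpJ$ and $P_{\C\xi_\varphi}$ vanish, so there $\langle A\,\xi,\xi\rangle = -\sum_j\|(a_j - Ja_jJ)\xi\|^2\le 0$, and similarly for $A_\alpha$. Since $a_j, \alpha(a_j)\in M$ commute with $JpJ\in M'$ and $Ja_jJ = J(pa_jp)J$ commutes with $JpJ$, both $A$ and $A_\alpha$ commute with $JpJ$, hence are block-diagonal for $\rL^2(M) = \ran(JpJ)\oplus\ran(JpJ)^\perp$; therefore $A\le 0$ and $A_\alpha\le 0$ as operators on the whole of $\rL^2(M)$.

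Tensoring with $N$ is then immediate: $A\otimes 1\le 0$ and $A_\alpha\otimes 1\le 0$ on $\rL^2(M)\ovt\rL^2(N) = \rL^2(M\ovt N)$. For $z\in M\ovt N$ I would evaluate at $\eta = z(\xi_\varphi\otimes\xi_\psi)$: since $JpJ\otimes 1\in(M\ovt N)'$ fixes $\eta$ one gets $\langle(JpJ\otimes 1)\eta,\eta\rangle = \|z\|_{\varphi\otimes\psi}^2$; since $(P_{\C\xi_\varphi}\otimes 1)\eta = \xi_\varphi\otimes\rE_\varphi(z)\xi_\psi$ (as in the proof of Theorem \ref{cor-strong-gap}) one gets $\langle(P_{\C\xi_\varphi}\otimes 1)\eta,\eta\rangle = \|\rE_\varphi(z)\|_{\varphi\otimes\psi}^2$; and since $(Ja_jJ\otimes 1)\eta = z(a_j\xi_\varphi\otimes\xi_\psi) = (za_j)(\xi_\varphi\otimes\xi_\psi)$ — this is where $a_j\in M_\varphi$ enters — while $(a_j\otimes 1)\eta = (a_jz)(\xi_\varphi\otimes\xi_\psi)$, one gets $\langle(|a_j - Ja_jJ|^2\otimes 1)\eta,\eta\rangle = \|za_j - a_jz\|_{\varphi\otimes\psi}^2$. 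Hence $0\ge\langle(A\otimes 1)\eta,\eta\rangle = \frac1\kappa\big(\|z\|_{\varphi\otimes\psi}^2 - \|\rE_\varphi(z)\|_{\varphi\otimes\psi}^2\big) - \sum_j\|za_j - a_jz\|_{\varphi\otimes\psi}^2$, which is \eqref{eq-cor-gap-finite} because $\|z - \rE_\varphi(z)\|_{\varphi\otimes\psi}^2 = \|z\|_{\varphi\otimes\psi}^2 - \|\rE_\varphi(z)\|_{\varphi\otimes\psi}^2$; the identical computation with $A_\alpha$ and $(\alpha(a_j)\otimes 1)\eta = (\alpha(a_j)z)(\xi_\varphi\otimes\xi_\psi)$ yields \eqref{eq-cor-gap-out-finite}.

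I expect the only genuinely delicate point to be the globalization step in the second paragraph: the gap of Theorem \ref{spectral-gap-finite} a priori only constrains vectors in the dense subspace $M\xi_\varphi$, whereas to tensor we need the honest operator inequalities $A\le 0$ and $A_\alpha\le 0$ on all of $\rL^2(M)$. This is exactly what forces the identification $\overline{M\xi_\varphi} = \ran(JpJ)$ together with the commutation of $A$, $A_\alpha$ with $JpJ$, so that negativity on $\ran(JpJ)$ and trivial negativity on its complement propagate to the whole space. Everything else is routine manipulation in the standard form, and the argument is in fact shorter than that of Theorem \ref{cor-strong-gap} precisely because no infimum term has to be absorbed.
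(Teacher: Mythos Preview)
Your proof is correct and follows the same approach as the paper's: rewrite \eqref{eq-spectral-gap-finite} and \eqref{eq-spectral-gap-out-finite} as operator inequalities $JpJ - P_{\C\xi_\varphi} \leq \kappa\sum_j|a_j-Ja_jJ|^2$ and $JpJ \leq \kappa\sum_j|Ja_jJ-\alpha(a_j)|^2$ on $\rL^2(M)$, tensor with $1_{\rL^2(N)}$, and evaluate at $z(\xi_\varphi\otimes\xi_\psi)$. The paper simply asserts these operator inequalities, while you have carefully justified them via the density $\overline{M\xi_\varphi} = \ran(JpJ)$ and the block-diagonality with respect to $JpJ$; this is exactly the detail the paper leaves implicit.
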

\begin{proof}
Let $A=\sum_{j=1}^{k} |a_j-Ja_jJ|^{2} \in \mathbf B(\rL^{2}(M))$. Then, by \eqref{eq-spectral-gap-finite} we have $JpJ-P_{\C \xi_\varphi } \leq \kappa A$. Hence $(JpJ-P_{\C \xi_\varphi })\otimes 1 \leq \kappa (A\otimes 1)$ in $\mathbf B(\rL^{2}(M \ovt N))$ and this gives \eqref{eq-cor-gap-finite}.

Similarly, fix $ \alpha \in \Aut(M) \setminus \pi_M^{-1}(\mathcal V)$ and let $A=\sum_{j=1}^{k} |Ja_jJ-\alpha(a_j)|^{2} \in \mathbf B(\rL^{2}(M))$. Then by \eqref{eq-spectral-gap-out-finite} we have $JpJ \leq \kappa A$. Hence $JpJ \otimes 1 \leq \kappa (A \otimes 1)$ and this gives \eqref{eq-cor-gap-out-finite}.
\end{proof}

\subsection*{Proofs of Theorem \ref{main-thm-full-tensor-products} and Corollary \ref{cor-full-tensor-products}}

\begin{proof}[Proof of Theorem \ref{main-thm-full-tensor-products}]
{\bf Proof of the first statement.} Let $(x_i)_{i \in I}$ be a centralizing net in $\Ball(M\ovt N)$. Let $\varphi \in M_*$ be a state which satisfies the conclusion of Theorem \ref{strong-gap}. Let $y_i=\rE_\varphi(x_i)$ for every $i \in I$. Observe that $(y_i)_{i \in I}$ is a centralizing net in $N$. Moreover, \eqref{eq-cor-spectral-gap} shows that $\lim_i \| x_i-y_i \|_{\varphi \otimes \psi} =0$ for any state $\psi \in N_*$. This means that $(x_i-y_i)p \to 0$ strongly as $i \to \infty$ where $p \in M$ is the support of $\varphi$. Since $(x_i)_{i \in I}$ is asymptotically central, this shows that $(x_i-y_i)q \to 0$ strongly as $i \to \infty$ for any projection $q \in M$ which is equivalent to $p$. Therefore, since $M$ is a factor, we finally obtain $x_i-y_i \to 0$ strongly as $i \to \infty$.

{\bf Proof of the second statement.} Since the natural inclusion $\iota : \Out(M) \times \Out(N) \to \Out(M \ovt N)$ is continuous, we only have to prove that $\iota^{-1}$ is continuous on $\iota(\Out(M) \times \Out(N))$. For this, we have to show that for any net $(\alpha_i \otimes \beta_i)_{i \in I}$ in $\Aut(M) \times \Aut(N)$ such that $\pi_{M \ovt N}(\alpha_i \otimes \beta_i) \to 1$ in $\Out(M \ovt N)$ as $i \to \infty$ we have $\pi_M(\alpha_i) \to 1$ in $\Out(M)$ and $\pi_N(\beta_i) \to 1$ in $\Out(N)$ as $i \to \infty$. We can always assume that $I$ is large enough so that there exists a decreasing net $(\mathcal{W}_i)_{i \in I}$ of $*$-strong neighborhoods of $0$ in $N$ which is cofinal, meaning that for every $*$-strong neighborhood $\mathcal{W}$ of $0$ in $N$ there exists $i \in I$ such that $\mathcal{W}_i \subset \mathcal{W}$.

First, we prove that $\pi_M(\alpha_i) \to 1$ in $\Out(M)$ as $i \to \infty$. Assume by contradiction that this is not the case. Then there exist an open neighborhood $\mathcal V$ of $1$ in $\Out(M)$ and a subnet $(\alpha_j)_{j \in J}$ such that $\pi_M(\alpha_j) \notin \mathcal V$ for every $j \in J$. Choose $\varphi \in M_\ast$, $\kappa > 0$ and $a_1,\dots,a_k$ which satisfy the conclusion of Theorem \ref{strong-gap}. Let $\psi \in N_\ast$ be any state. Since $\pi_{M \ovt N}(\alpha_j \otimes \beta_j) \to 1$ in $\Out(M \ovt N)$ as $j \to \infty$, there exists a net $(u_j)_{j \in J}$ in $\mathcal U(M \ovt N)$ such that $\Ad(u_j)^{-1} \circ (\alpha_j \otimes \beta_j) \to \id_{M \ovt N}$ in $\Aut(M \ovt N)$ as $j \to \infty$. In particular, we have $\lim_j \|u_j a - \alpha_j(a) u_j\|_{\varphi \otimes \psi} = 0$ for every $a \in M$ and $\lim_{j} \|u_j (\xi_\varphi \otimes \xi_\psi) - (\alpha_j(\xi_\varphi) \otimes \beta_j(\xi_\psi))u_j\| = 0$. Since $\|u_j\|_{\varphi \otimes \psi} = 1$ for every $j \in J$, this contradicts \eqref{eq-cor-spectral-gap-out}.

Secondly, we prove that $\pi_N(\beta_i) \to 1$ in $\Out(N)$ as $i \to \infty$. Since $\pi_M(\alpha_i) \to 1$ in $\Out(M)$ we have that $\pi_{M \ovt N}(\alpha_i \otimes \id_N) \to 1$ in $\Out(M \ovt N)$ as $i \to \infty$. Since by assumption, $\pi_{M \ovt N}(\alpha_i \otimes \beta_i) \to 1$ in $\Out(M \ovt N)$, we also have that $\pi_{M \ovt N}(\id_M \otimes \beta_i) \to 1$ in $\Out(M \ovt N)$ as $i \to \infty$. Hence there exists a net $(v_i)_{i\in I}$ in $\mathcal U(M \ovt N)$ such that $\Ad(v_i)^{-1} \circ (\id_M \otimes \beta_i) \to \id_{M \ovt N}$ in $\Aut(M \ovt N)$ as $i \to \infty$. Write $y_i = \rE_\varphi(v_i) \in \Ball(N)$ for every $i \in I$. By using \cite[Lemma 5.2]{Ma16}, we can find for every $i \in I$ an invertible element $x_i \in \Ball(N)$ such that $y_i - x_i \in \mathcal{W}_i$ and $\beta_i^{-1}(y_i) - \beta_i^{-1}(x_i) \in \mathcal{W}_i$. Then we have $x_i-y_i \to 0$ and $\beta_i^{-1}(y_i) - \beta_i^{-1}(x_i) \to 0$ in the $*$-strong topology as $i \to \infty$. Let $x_i=u_i|x_i|$ be the polar decomposition of $x_i$ with $u_i \in \mathcal{U}(N)$. We will show that $\Ad(u_i)^{-1} \circ \beta_i \to \id_N$ in $\Aut(N)$ as $i \to \infty$. For this, it is enough to show that $\lim_{i} \| u_i \xi_\psi - \beta_i(\xi_\psi) u_i\| = 0$ for any state $\psi \in N_*$.

First, since $\Ad(v_i)^{-1} \circ (\id_M \otimes \beta_i) \to \id_{M \ovt N}$ in $\Aut(M \ovt N)$ as $i \to \infty$, we have $\lim_i \| v_ia  - a v_i\|_{\varphi \otimes \psi} = 0$ for every $a \in M$ and $\lim_{i} \| v_i (\xi_\varphi \otimes \xi_\psi) - (\xi_\varphi \otimes \beta_i(\xi_\psi))v_i \| = 0$. Then \eqref{eq-cor-spectral-gap} implies that $\lim_i \|v_i - y_i\|_{\varphi \otimes \psi} = 0$ which implies, since $v_i$ is a unitary, that $\lim_i \| x_i \|_\psi = \lim_i \| y_i \|_\psi=1$. Since $\lim_{i} \| v_i (\xi_\varphi \otimes \xi_\psi) - (\xi_\varphi \otimes \beta_i(\xi_\psi))v_i \| = 0$, using the orthogonal projection $P_{\C \xi_\varphi} \otimes 1 : \rL^2(M \ovt N) \to \C \xi_\varphi \otimes \rL^2(N)$ which satisfies $(P_{\C \xi_\varphi} \otimes 1)(z (\xi_{\varphi} \otimes \xi_{\psi})) = \xi_\varphi \otimes \rE_\varphi(z)\xi_\psi$ and $(P_{\C \xi_\varphi} \otimes 1)((\xi_{\varphi} \otimes \xi_{\psi})z) = \xi_\varphi \otimes \xi_\psi \rE_\varphi(z)$ for every $z \in M \ovt N$, we also have $\lim_{i} \| y_i \xi_\psi - \beta_i(\xi_\psi) y_i\| = 0$ and thus $\lim_{i} \| x_i \xi_\psi - \beta_i(\xi_\psi) x_i\| = 0$. In particular, we obtain $\lim_i \| \beta_i^{-1}(x_i^*) \|_\psi=\lim_i \| x_i \|_\psi=1$. Since $x_i \in \Ball(N)$, this easily implies that $\lim_i \| x_i-u_i\|_\psi=\lim_i \| \beta_i^{-1}(x_i^*-u_i^*) \|_\psi=0$. This, combined with $\lim_{i} \| x_i \xi_\psi - \beta_i(\xi_\psi) x_i\| = 0$, gives $\lim_{i} \| u_i \xi_\psi - \beta_i(\xi_\psi) u_i\| = 0$ as we wanted. This finally shows that $\pi_N(\beta_i) \to 1$ in $\Out(N)$ as $i \to \infty$.
\end{proof}

\begin{proof}[Proof of Corollary \ref{cor-full-tensor-products}] 
{\bf Proof of the first statement.} Let $(x_i)_{i \in I}$ be a uniformly bounded centralizing net in $M_1 \ovt M_2$. Since $M_1$ is full, then by Theorem \ref{main-thm-full-tensor-products} we can find a uniformly bounded centralizing net $(y_i)_{ i \in I}$ in $M_2$ such that $x_i-y_i \to 0$ strongly as $i \to \infty$. But $M_2$ is also full hence we can find a bounded net $(\lambda_i)_{i \in I}$ in $\C$ such that $y_i-\lambda_i 1 \to 0$ strongly as $i \to \infty$. Therefore we get $x_i- \lambda_i 1 \to 0$ strongly as $i \to \infty$. This means that $M_1 \ovt M_2$ is full.

{\bf Proof of the second statement.} Choose two faithful semifinite normal weights $\varphi_1$ and $\varphi_2$ on $M_1$ and $M_2$ respectively. Put $M=M_1 \ovt M_2$ and $\varphi = \varphi_1 \otimes \varphi_2$ so that $\sigma_t^\varphi = \sigma_t^{\varphi_1} \otimes\sigma_t^{\varphi_2}$ for every $t \in \R$. By Theorem \ref{main-thm-full-tensor-products}, the natural homomorphism $\Out(M_1) \times \Out(M_2) \to \Out(M)$ is a homeomorphism onto its range. Therefore, for every net $(t_i)_{i \in I}$ in $\R$, we have
\begin{align*}
t_i \to 0 \text{ w.r.t. } \tau(M_2) \quad & \Leftrightarrow \quad \pi_{M}(\sigma_{t_i}^{\varphi_1} \otimes \sigma_{t_i}^{\varphi_2}) \to 1 \text{ in } \Out(M) \\ 
& \Leftrightarrow \quad \left(\pi_{M_1}(\sigma_{t_i}^{\varphi_1}),\pi_{M_2}(\sigma_{t_i}^{\varphi_2}) \right) \to 1 \text{ in } \Out(M_1) \times \Out(M_2) \\
& \Leftrightarrow \quad t_i \to 0 \text{ w.r.t. } \tau(M_1) \text{ and } \tau(M_2). \qedhere
\end{align*}
\end{proof}

We can prove a more precise statement by using Ocneanu's ultraproduct. 

\begin{thm} \label{ultraproduct_full}
Let $M$ be any full factor and $N$ any von Neumann algebra. Assume that $M$ and $N$ are $\sigma$-finite. Then for any cofinal ultrafilter $\omega$ on any directed set $I$, we have $M' \cap (M \ovt N)^{\omega}=N^{\omega}$.
\end{thm}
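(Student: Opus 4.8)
The plan is to prove the two inclusions separately. The inclusion $N^\omega \subset M' \cap (M \ovt N)^\omega$ is immediate: since $M$ is $\sigma$-finite, $\varphi_0 \otimes \id_N$ is a faithful normal conditional expectation of $M \ovt N$ onto $N$ for any faithful normal state $\varphi_0$ on $M$, so $N^\omega$ is a von Neumann subalgebra of $(M \ovt N)^\omega$, and it commutes with the constant copy of $M$. For the reverse inclusion I would adapt the proof of Theorem \ref{ultraproduct-crossed}, splitting into the semifinite and the type ${\rm III}$ cases just as there.

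In the semifinite case, fix a faithful normal state $\psi$ on $N$, a nonzero finite projection $p \in M$, and $\varphi = p\tau p$, $\kappa$, $a_1, \dots, a_k \in (pMp)_+$ as in Theorem \ref{cor-gap-finite}. The map $\rE_\varphi = \varphi \otimes \id_N : M \ovt N \to N$ is normal and sends bounded $\ast$-strongly null sequences to bounded $\ast$-strongly null sequences (normality and the Kadison--Schwarz inequality), hence induces a normal conditional expectation $\rE_\varphi^\omega : (M \ovt N)^\omega \to N^\omega$ with $\rE_\varphi^\omega((z_i)^\omega) = (\rE_\varphi(z_i))^\omega$. Given $Z \in M' \cap (M \ovt N)^\omega$, put $W = Z - \rE_\varphi^\omega(Z) \in M' \cap (M \ovt N)^\omega$; applying \eqref{eq-cor-gap-finite} to $z_i$ and to $z_i^*$ (using that $W$ and $W^*$ commute with the $a_j \in M$) shows that any bounded representing sequence $(w_i)_i$ of $W$ satisfies $\lim_{i \to \omega}(\|w_i\|_{\varphi \otimes \psi} + \|w_i^*\|_{\varphi \otimes \psi}) = 0$. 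Since $W$ commutes with $p \otimes 1$ and $\varphi \otimes \psi$ has support $p \otimes 1$, the compression $W(p \otimes 1)$ lies in $(pMp \ovt N)^\omega$ and has a representing sequence that is $\ast$-strongly null along $\omega$ for the faithful state $(\varphi \otimes \psi)|_{pMp \ovt N}$; hence $W(p \otimes 1) = 0$. Letting $p \nearrow 1$ yields $W = 0$, i.e.\ $Z \in N^\omega$.

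In the type ${\rm III}$ case, Theorem \ref{gap} lets me take the state $\varphi$ of Theorem \ref{strong-gap} faithful; I fix such $\varphi$, $a_1, \dots, a_k \in M$ and $\kappa$ as in Theorem \ref{cor-strong-gap}, a faithful normal state $\psi$ on $N$, the now faithful conditional expectation $\rE_{N^\omega} : (M \ovt N)^\omega \to N^\omega$ induced by $\rE_\varphi = \varphi \otimes \id_N$, and the faithful normal state $\Phi = (\varphi \otimes \psi)^\omega$, whose modular automorphism group is $t \mapsto (\sigma_t^{\varphi \otimes \psi})^\omega$. Since $\sigma^\Phi$ restricts to $\sigma^\varphi$ on the constant copy of $M$, the algebra $\mathcal A := M' \cap (M \ovt N)^\omega$ and its subalgebra $N^\omega$ are globally $\sigma^\Phi$-invariant, so $\overline{N^\omega \xi_\Phi} \subset \overline{\mathcal A \xi_\Phi}$ are $\Delta_\Phi$-invariant and $\Delta_\Phi|_{\overline{\mathcal A \xi_\Phi}}$ is the modular operator of $\Phi|_{\mathcal A}$. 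It suffices to show $\rE_{N^\omega}(Z) = Z$ for every $Z \in \mathcal A$, so fix $Z \in \mathcal A$, put $W = Z - \rE_{N^\omega}(Z) \in \mathcal A$ and assume $W \neq 0$, aiming for a contradiction. Then $W\xi_\Phi$ is a nonzero vector of $\mathcal K_0 := \overline{\mathcal A \xi_\Phi} \ominus \overline{N^\omega \xi_\Phi}$, so the positive injective operator $S = \Delta_\Phi^{1/2}|_{\mathcal K_0}$ has some $\lambda > 0$ in its spectrum. As in Theorem \ref{ultraproduct-crossed}, for each $\varepsilon > 0$ I would produce $W' \in \mathcal A$ with $\rE_{N^\omega}(W') = 0$, $\|W'\|_\Phi = 1$ and $\|\xi_\Phi W' - \lambda W'\xi_\Phi\| < \varepsilon$, using that the spectral subspace of $S$ near $\lambda$ is nonzero, that $\mathcal A \xi_\Phi$ is dense, that projecting an approximant by $1 - \rE_{N^\omega}$ keeps it in $\ker \rE_{N^\omega}$, and that smoothing by the modular group $\sigma^\Phi$ (which preserves both $\mathcal A$ and $\ker \rE_{N^\omega}$) pushes it into a bounded spectral subspace of $\Delta_\Phi$ without spoiling these properties. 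Descending to a bounded representing sequence $(w_i)_i$ of $W'$, I would find an index $i$ with $\|w_i - \rE_\varphi(w_i)\|_{\varphi \otimes \psi}^2 \geq 1/2$, $\sum_{j=1}^k \|w_i a_j - a_j w_i\|_{\varphi \otimes \psi}^2 < \varepsilon^2$, and $\|w_i \xi_{\varphi \otimes \psi} - \lambda^{-1}\xi_{\varphi \otimes \psi} w_i\|^2 < \lambda^{-2}\varepsilon^2$, contradicting \eqref{eq-cor-spectral-gap} (with $\beta = \id_N$) once $\varepsilon$ is small enough; hence $W = 0$ and $Z \in N^\omega$.

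The hard part is this final descent: the estimates on $W'$ are phrased through the modular operator $\Delta_\Phi$ of the Ocneanu ultraproduct, and they have to be transferred to honest estimates on a representing sequence at the level of $M \ovt N$. This relies on $\sigma_t^\Phi = (\sigma_t^{\varphi \otimes \psi})^\omega$ together with the KMS condition, which make the relevant $\ast$-strong-type quantities on $(M \ovt N)^\omega$ equal to the $\omega$-ultralimits of the corresponding quantities on $M \ovt N$, as in \cite{AH12} and in the proof of Theorem \ref{ultraproduct-crossed}; a related technical point is arranging $W'$ to lie simultaneously in $\ker \rE_{N^\omega}$ and in a bounded spectral subspace of $\Delta_\Phi$, which is the reason for smoothing by $\sigma^\Phi$ after projecting off $N^\omega$.
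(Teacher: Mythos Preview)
Your proposal is correct and follows essentially the same route as the paper's proof: a semifinite/type~$\III$ split, with the semifinite case handled directly via Theorem~\ref{cor-gap-finite} and the type~$\III$ case by a spectral argument on $\Delta_{(\varphi\otimes\psi)^\omega}^{1/2}$ restricted to $\rL^2(M'\cap(M\ovt N)^\omega)\ominus\rL^2(N^\omega)$, descending to $M\ovt N$ to contradict \eqref{eq-cor-spectral-gap}. One small wrinkle in your semifinite write-up: your $W=Z-\rE_\varphi^\omega(Z)$ depends on $p$ through $\varphi=p\tau p$, so ``letting $p\nearrow 1$ yields $W=0$'' is imprecise; the clean way (as in the paper) is to conclude $Z(p\otimes 1)\in N^\omega\cdot(p\otimes 1)$ for every finite $p$, note that the resulting elements of $N^\omega$ agree, and then let $p\nearrow 1$.
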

\begin{proof}
When $M$ is a semifinite, Theorem \ref{cor-gap-finite} shows that for any $Z \in  M' \cap (M \ovt N)^\omega$ we have $Zp \in N^{\omega}p$ for all finite projections $p \in M$. Hence $Z \in N^{\omega}$. Therefore $M' \cap (M \ovt N)^\omega=N^{\omega}$.

Now assume that $M$ is of type ${\rm III}$. Choose a faithful state $\varphi \in M_\ast$ and $a_1,\dots,a_k \in M$ which satisfy the conclusion of Theorem \ref{strong-gap}. Let $\psi \in N_\ast$ be any faithful state. Denote by $\rE_N : M \ovt N \to N$ the unique $(\varphi \otimes \psi)$-preserving conditional expectation. Assume by contradiction that $N^\omega \subsetneq M' \cap (M \ovt N)^\omega$. Since the inclusion $N^\omega \subset M' \cap (M \ovt N)^\omega$ is globally invariant under the modular automorphism group $\sigma^{(\varphi \otimes \psi)^\omega}$, we may regard the operator 
$$S = \Delta^{1/2}_{(\varphi \otimes \psi)^\omega} |_{\rL^2(M' \cap (M \ovt N)^\omega) \ominus \rL^2(N^\omega)}$$ 
as a closed positive definite densily defined operator on the nonzero Hilbert space $\rL^2(M' \cap (M \ovt N)^\omega) \ominus \rL^2(N^\omega)$. Take some $\lambda > 0$ in the spectrum of $S$. Then, for any $\varepsilon > 0$, we can find $Z \in M' \cap (M \ovt N)^\omega$ such that $\|Z\|_{(\varphi \otimes \psi)^\omega} = 1$, $\rE_{N^\omega}(Z) = 0$ and $\|\xi_{(\varphi \otimes \psi)^\omega} Z - \lambda Z \xi_{(\varphi \otimes \psi)^\omega}\| < \varepsilon$. Hence we can find $z \in M \ovt N$ which satisfies $\|z\|_{\varphi \otimes \psi} = 1$, $\rE_{N}(z) = 0$, $\| z \xi_{\varphi \otimes \psi} - \lambda^{-1} \xi_{\varphi \otimes \psi} z\| \leq 2 \lambda^{-1} \varepsilon$ and $\|z a_j - a_j z\|_{\varphi \otimes \psi} \leq \varepsilon$ for every $j \in \{1, \dots, k\}$. This however contradicts \eqref{eq-cor-spectral-gap} if $\varepsilon > 0$ is small enough. Therefore, we have $M' \cap (M \ovt N)^\omega = N^\omega$.
\end{proof}

We finish this section by noting some important consequences of Conjecture \ref{conj-gap}.

\begin{prop}\label{prop-conjecture}
Let $M$ be any $\sigma$-finite full factor of type $\III$. Suppose that there exist a faithful state $\varphi \in M_\ast$, $\kappa > 0$ and a family $a_1,\dots,a_k \in M$ satisfying $a_j \varphi a_j^* \leq \varphi$ for every $j \in \{ 1, \dots, k \}$ such that we have
\begin{equation*}
\forall x \in M, \quad \| x -\varphi(x) 1 \|_\varphi^2 \leq \kappa \sum_{j = 1}^k \| x a_j - a_j x \|_\varphi^2.
\end{equation*}
Then we have $\mathbf K(\rL^2(M)) \subset \rC^*(M, JMJ)$. 

Moreover, every uniformly bounded net $(x_i)_{i \in I}$ in $M$ which is asymptotically central, i.e.\ $x_ia-ax_i \to 0$ strongly as $i \to \infty$ for all $a \in M$, must be trivial, i.e.\ there exists a bounded net $(\lambda_i)_{i \in I}$ in $\C$ such that $x_i-\lambda_i 1 \to 0$ strongly as $i \to \infty$.
\end{prop}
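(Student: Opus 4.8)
The plan is to derive both conclusions from the hypothesized spectral gap inequality
$\|x-\varphi(x)1\|_\varphi^2 \leq \kappa \sum_j \|xa_j-a_jx\|_\varphi^2$
by transporting it first to an inequality of bounded operators on $\rL^2(M)$, and second to an asymptotic statement about nets. For the first part, observe that the inequality can be rephrased, using $a_j\xi_\varphi = \xi_\varphi a_j^*$ (which is legitimate precisely because $a_j\varphi a_j^* \leq \varphi$, so that $a_j\xi_\varphi$ lies in $\Ball(M)\xi_\varphi$ by Remark \ref{rem-standard-form}), as a positivity statement on the cyclic subspace $\overline{M\xi_\varphi} = \rL^2(M)$. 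Writing $b_j = a_j - Ja_jJ \in \mathbf B(\rL^2(M))$, the inequality says $1 - P_{\C\xi_\varphi} \leq \kappa \sum_j b_j^* b_j$ on the dense subspace $M\xi_\varphi$, hence everywhere since $\varphi$ is faithful. This exhibits the rank-one projection $P_{\C\xi_\varphi}$ as $\geq 1 - \kappa\sum_j b_j^*b_j$; since each $b_j \in \rC^*(M,JMJ)$ and this $\rC^*$-algebra acts irreducibly (as $M$ is a factor, $M \vee JMJ$ has commutant $\C1$), a standard argument shows $P_{\C\xi_\varphi} \in \rC^*(M,JMJ)$: indeed $\kappa\sum_j b_j^*b_j + P_{\C\xi_\varphi} \geq 1$ while $P_{\C\xi_\varphi}$ dominated from below by a positive element $c = 1-\kappa\sum_jb_j^*b_j$ of the $\rC^*$-algebra forces a nonzero rank-one (hence compact) projection into $\rC^*(M,JMJ)$, and by irreducibility the ideal of compacts is then contained in $\rC^*(M,JMJ)$, so $\mathbf K(\rL^2(M)) \subset \rC^*(M,JMJ)$. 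I would cite \cite{Co75b} for this last implication since it is exactly the type $\II_1$ mechanism.

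For the second part, let $(x_i)_{i\in I}$ be a uniformly bounded asymptotically central net in $M$. The subtlety is that asymptotic centrality is stated with respect to the strong topology on $M$, whereas the inequality controls $\|\cdot\|_\varphi$-norms; but since $\varphi$ is faithful, $xa_j - a_jx \to 0$ strongly implies $\|x_ia_j - a_jx_i\|_\varphi \to 0$ (using that right multiplication by a fixed vector is strong-to-norm continuous on bounded sets and $\|y\|_\varphi = \|y\xi_\varphi\|$). Hence the inequality applied to $x = x_i$ gives $\|x_i - \varphi(x_i)1\|_\varphi \to 0$, i.e. $(x_i - \varphi(x_i)1)\xi_\varphi \to 0$ in $\rL^2(M)$. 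Setting $\lambda_i = \varphi(x_i) \in \C$, which is bounded since $(x_i)$ is, this says $(x_i - \lambda_i 1)\xi_\varphi \to 0$ strongly. To upgrade to $x_i - \lambda_i 1 \to 0$ strongly on all of $\rL^2(M)$, I would use the asymptotic centrality: for any $a \in M$, $(x_i-\lambda_i1)a\xi_\varphi = a(x_i-\lambda_i1)\xi_\varphi + [x_i,a]\xi_\varphi \to 0$, the first term because $(x_i-\lambda_i1)\xi_\varphi \to 0$ and $a$ is bounded, the second because $[x_i,a] \to 0$ strongly; since $M\xi_\varphi$ is dense and $(x_i - \lambda_i1)$ is uniformly bounded, this gives $x_i - \lambda_i1 \to 0$ strongly, which is exactly triviality.

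The main obstacle I anticipate is the passage from "$P_{\C\xi_\varphi}$ is dominated below by a positive element of $\rC^*(M,JMJ)$" to "$\mathbf K(\rL^2(M)) \subset \rC^*(M,JMJ)$": one must argue carefully that a $\rC^*$-algebra acting irreducibly on a Hilbert space, which contains a positive element $c$ with $c \leq P$ for some nonzero finite-rank projection $P$ and $c + (\text{something}) \geq 1$, must contain a nonzero compact operator — the cleanest route is to note $c$ itself has the rank-one projection $P_{\C\xi_\varphi}$ as a spectral projection up to approximation, or more robustly to invoke the known statement from \cite{Co75b} that the existence of such a spectral gap inequality is equivalent to $\mathbf K(\rL^2(M)) \subset \rC^*(M,JMJ)$, which is proved there for $\II_1$ factors by exactly this functional-calculus argument and transfers verbatim. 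Everything else is routine manipulation of the standard form and of strong convergence on bounded sets.
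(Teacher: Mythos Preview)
Your overall strategy matches the paper's, but there is one real error and one muddled step you should fix.

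\textbf{The error.} The identity $a_j\xi_\varphi = \xi_\varphi a_j^*$ is false in general, and your justification for it is confused. The condition $a_j\varphi a_j^* \leq \varphi$ does \emph{not} say that $a_j\xi_\varphi \in \Ball(M)\xi_\varphi$ (that would just mean $\|a_j\|_\infty \le 1$); rather, it says that $a_j\xi_\varphi \in \xi_\varphi\Ball(M)$, i.e.\ there exists some $b_j \in \Ball(M)$ with $a_j\xi_\varphi = \xi_\varphi b_j^* = Jb_jJ\xi_\varphi$. In general $b_j \neq a_j$: already for $M = \mathbf M_2(\C)$ with a non-tracial state and $a = e_{12}$ one computes $a\xi_\varphi \neq \xi_\varphi a^*$. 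The equality $a_j\xi_\varphi = \xi_\varphi a_j^*$ you wrote would force $a_j\xi_\varphi$ to be $J$-fixed, which is an extra hypothesis not present here (it holds in Theorem~\ref{strong-gap} only because there the $\xi_j$ are taken positive). The fix is immediate: set $T = \sum_j (a_j - Jb_jJ)^*(a_j - Jb_jJ) \in \rC^*(M,JMJ)$ with these $b_j$'s. Then $(a_j - Jb_jJ)x\xi_\varphi = (a_jx - xa_j)\xi_\varphi$ for all $x\in M$, so the hypothesis becomes exactly $\langle T x\xi_\varphi, x\xi_\varphi\rangle \geq \tfrac{1}{\kappa}\|x\xi_\varphi\|^2$ for $x\in\ker\varphi$, and moreover $T\xi_\varphi = 0$.

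\textbf{The muddled step.} Your passage to $P_{\C\xi_\varphi} \in \rC^*(M,JMJ)$ via ``$c = 1 - \kappa T$ is a positive element dominated by $P_{\C\xi_\varphi}$'' does not work as stated, because $c$ need not be positive. The clean route---which you gesture at in your final paragraph---is the one the paper takes: the two facts $T\xi_\varphi = 0$ and $T|_{(\C\xi_\varphi)^\perp} \geq \tfrac{1}{\kappa}$ together say that $0$ is an isolated simple eigenvalue of $T$, so $P_{\C\xi_\varphi} = \mathbf 1_{\{0\}}(T)$ lies in $\rC^*(T) \subset \rC^*(M,JMJ)$ by continuous functional calculus. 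From there, the paper does not invoke irreducibility abstractly but simply notes that $yP_{\C\xi_\varphi}x^*$ (for $x,y \in M$) is the rank-one operator $\eta \mapsto \langle\eta,x\xi_\varphi\rangle y\xi_\varphi$, and density of $M\xi_\varphi$ gives all rank-ones, hence all compacts.

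Your argument for the second assertion (asymptotically central nets are trivial) is correct and in fact more detailed than the paper, which simply declares it obvious.
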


Note that the second part of Proposition \ref{prop-conjecture} does not follow from Corollary \ref{central-sequence}. Indeed, a uniformly bounded central net may not represent an element of the Ocneanu ultraproduct.

\begin{proof}
By assumption, we know that for every $j \in \{1, \dots, k\}$, there exists $b_j \in \Ball(M)$ such that $a_j \xi_\varphi = \xi_\varphi b_j^* = Jb_jJ \xi_\varphi$ (see Remark \ref{rem-standard-form}). Put $T = \sum_{j = 1}^k |a_j - Jb_j J|^2 \in \rC^*(M, JMJ)$ and observe that $T \xi_\varphi = 0$. Note that $\rL^2(M) = \C\xi_\varphi \oplus \overline{\ker(\varphi)\xi_\varphi}$. For every $x \in \ker(\varphi)$, we have
\begin{align*}
\langle T x \xi_\varphi, x\xi_\varphi \rangle &= \sum_{j = 1}^k \|(a_j - Jb_jJ) x \xi_\varphi \|^2 \\
&= \sum_{j = 1}^k \|(x a_j - a_j x) \xi_\varphi\|^2 \\
& \geq \frac1\kappa \|x \xi_\varphi\|^2.
\end{align*}
Therefore, the eigenvalue $0$ has multiplicity $1$ and the spectrum of $T$ is contained in $\{0\} \cup [\frac1\kappa, \|T\|_\infty]$. Thus, by continuous functional calculus, we have that the orthogonal projection $\mathbf 1_{\{0\}}(T) = P_{\C \xi_\varphi} : \rL^2(M) \to \C\xi_\varphi$ belongs to $\rC^*(M, JMJ)$. For all $x, y \in M$, we have that $y P_{\C \xi_\varphi} x^* \in \rC^*(M, JMJ)$ is the rank one operator $\rL^2(M) \to \rL^2(M) : \eta \mapsto \langle \eta, x \xi_\varphi \rangle y \xi_\varphi$. By density of $M\xi_\varphi$ in $\rL^2(M)$, it follows that $\rC^*(M, JMJ)$ contains all rank one operators on $\rL^2(M)$ and hence $\mathbf K(\rL^2(M)) \subset \rC^*(M, JMJ)$. The second part is obvious.
\end{proof}

\section{Factors with almost periodic states: proof of Theorem \ref{main-thm-almost-periodic}}

Let $M$ be any full factor of type $\mathrm{III}$ with separable predual which possesses an almost periodic faithful normal state. Put $\Gamma = \Sd(M) \leq \R^*_+$ (see \cite{Co74}). Regarding $\Gamma$ as a discrete group, denote by $G = \widehat \Gamma$ its Pontryagin dual, which is a compact group. We view $\R$ as a dense subgroup of $G$.

By \cite[Theorem 4.7]{Co74}, there exists a unique (up to scaling and unitary conjugacy) almost periodic faithful normal semifinite weight $\psi$ on $M$ with $\psi(1)=+\infty$ which satisfies one of the following equivalent properties:
\begin{itemize}

\item [$(\rm i)$] The point spectrum of $\Delta_\psi$ is equal to $\Sd(M)$.

\item [$(\rm ii)$] $M_\psi$ is a factor.

\item [$(\rm iii)$] $(M_\psi)' \cap M=\C 1$.

\end{itemize}
Denote by $N = M_\psi$ the centralizer of the weight $\psi$. By \cite[Corollary 4.12]{Co74}, there exists a trace-scaling action $\theta : \Gamma \curvearrowright N$ such that we have the following isomorphism of inclusions
$$\left( M_\psi \subset M \right) \cong \left( N \subset N \rtimes_\theta \Gamma \right).$$
We will use the identification $M = N \rtimes \Gamma$ and regard $N \subset M$. We denote by $(v_\gamma)_{\gamma \in \Gamma}$ the canonical unitaries in $M = N \rtimes_{\theta} \Gamma$ implementing the action $\theta : \Gamma \curvearrowright N$ and by $\rE_N : M \to N$ the canonical faithful normal conditional expectation defined by $\rE_N(x v_\gamma) = \delta_{\gamma, 1_\Gamma} \, x v_\gamma$ for every $x \in N$ and every $\gamma \in \Gamma$. We moreover denote by $\widehat\theta : G \curvearrowright M$ the dual action defined by $\widehat \theta_g(x) = x$ for every $x \in N$ and $\widehat\theta_g(v_\gamma) = \langle g, \gamma\rangle \, v_\gamma$ for every $\gamma \in \Gamma$.

The proof of Theorem \ref{main-thm-almost-periodic} relies on the following key result that will be combined with Theorem \ref{crossed-product-finite}. 

\begin{thm}\label{thm-crossed-product}
Keep the same notation as above. Then $N$ is full and the image of $\theta(\Gamma)$ in $\Out(N)$ is discrete.
\end{thm}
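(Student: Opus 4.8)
The statement has two parts: (1) $N = M_\psi$ is a full factor, and (2) the image of $\theta(\Gamma)$ in $\Out(N)$ is discrete. The first part should follow from general principles: $N$ is the discrete core of the type $\mathrm{III}$ factor $M$ (since $M = N \rtimes_\theta \Gamma$ with $\Gamma = \Sd(M)$ and $N = M_\psi$), and the fullness of $M$ passes to its discrete core. I would invoke the cited result \cite[Proposition 5]{TU14}, which is precisely the statement that the discrete core of a full factor is full; the hypothesis that $M_\psi$ is a factor is guaranteed by the choice of $\psi$ via Connes' \cite[Theorem 4.7]{Co74} (property (ii) in the list above). So part (1) is essentially a citation.

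The substance is part (2). The strategy is to argue by contradiction: suppose the image of $\theta(\Gamma)$ in $\Out(N)$ is \emph{not} discrete. Then there is a net $\gamma_i \in \Gamma \setminus \{1_\Gamma\}$ with $\pi_N(\theta_{\gamma_i}) \to 1$ in $\Out(N)$, i.e.\ there exist unitaries $w_i \in \mathcal U(N)$ with $\mathrm{Ad}(w_i) \circ \theta_{\gamma_i} \to \mathrm{id}_N$ in $\mathrm{Aut}(N)$. The plan is to manufacture from the $w_i$ a nontrivial centralizing net in the type $\mathrm{III}$ factor $M = N \rtimes_\theta \Gamma$, contradicting fullness of $M$. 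The natural candidate is $x_i = w_i v_{\gamma_i} \in M$, which is unitary. One needs two things: first, that $(x_i)$ is centralizing for $M$, i.e.\ $\|x_i \omega - \omega x_i\| \to 0$ for every $\omega \in M_\ast$; and second, that $(x_i)$ is \emph{not} trivial, i.e.\ no scalars $\lambda_i$ with $x_i - \lambda_i 1 \to 0$ strongly. The non-triviality is easy: if $w_i v_{\gamma_i} - \lambda_i 1 \to 0$ strongly, applying $\rE_N$ gives $\rE_N(w_i v_{\gamma_i}) = \delta_{\gamma_i, 1_\Gamma} w_i = 0 \to \lambda_i 1$ (using $\gamma_i \neq 1_\Gamma$), forcing $\lambda_i \to 0$; but $x_i$ is unitary so $\|x_i \xi\| = \|\xi\| \not\to 0$ — contradiction. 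The centralizing property is where the work lies: one should check it on the $\sigma$-strongly dense $\ast$-subalgebra spanned by elements $a v_\gamma$ ($a \in N$, $\gamma \in \Gamma$) and on the cyclic vector of a nice weight (e.g.\ $\psi$ restricted appropriately, or using Remark \ref{rem-convergence}), reducing to the facts that $\mathrm{Ad}(w_i v_{\gamma_i}) \to \mathrm{id}$ on $N$ (which is the hypothesis, since $\mathrm{Ad}(w_i v_{\gamma_i})|_N = \mathrm{Ad}(w_i) \circ \theta_{\gamma_i}$), that it approximately fixes the $v_\gamma$, and that $\theta_{\gamma_i}$ approximately preserves $\psi$ on $N$.

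Here a subtlety enters, and I expect it to be the \textbf{main obstacle}: the action $\theta$ is \emph{trace-scaling}, so the $\gamma_i$ may scale the semifinite trace of $N$ by $\gamma_i$, and if $\gamma_i \to 1$ in $\R^\ast_+$ this is harmless, but $\Gamma = \Sd(M)$ is a subgroup of $\R^\ast_+$ that need not accumulate at $1$ — indeed if $\Gamma$ is discrete in $\R^\ast_+$ there is nothing to do. The real content is: non-discreteness of $\theta(\Gamma)$ in $\Out(N)$ must be ruled out even when $\Gamma$ itself is, say, all of $\R^\ast_+$ or a dense subgroup. To handle the trace-scaling issue cleanly I would pass to the semifinite picture: $N$ is a full \emph{semifinite} factor, and Theorem \ref{spectral-gap-finite} (the semifinite spectral gap) or better Theorem \ref{ultraproduct-crossed} / Corollary \ref{central-sequence} applies. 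Concretely, I would argue that $\mathrm{mod}(\theta_{\gamma_i}) = \gamma_i \to 1$ (since the modular homomorphism $\mathrm{Out}(N) \to \R^\ast_+$ is continuous and $\pi_N(\theta_{\gamma_i}) \to 1$), so that the $v_{\gamma_i}$ asymptotically commute with the trace vector in the appropriate sense, making the centralizing estimate on the core-part of $M$ go through; then fullness of $M$ (type $\mathrm{III}$) gives the contradiction. Thus the proof reduces, after the semifinite bookkeeping, to combining the hypothesis $\mathrm{Ad}(w_i)\circ\theta_{\gamma_i} \to \mathrm{id}_N$ with the structure $M = N \rtimes_\theta \Gamma$ and applying fullness of $M$; the delicate point is ensuring the weight estimates survive the trace-scaling, which is exactly what forces the use of the semifinite versions of the spectral gap results proved earlier in the paper.
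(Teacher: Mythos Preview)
Your overall skeleton matches the paper's: form $x_i = w_i v_{\gamma_i}$ and derive a contradiction with fullness of $M$. The non-triviality argument via $\rE_N$ is correct and is exactly how the paper concludes.

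However, there is a genuine gap in the centralizing step, and you have misidentified where the difficulty lies. The trace-scaling is \emph{not} the issue: the state condition in Remark~\ref{rem-convergence} is verified directly using any faithful state $\varphi = \varphi|_N \circ \rE_N$ on $M$, since
\[
\|\varphi \circ \Ad(w_i v_{\gamma_i}) - \varphi\| \;=\; \|\varphi|_N \circ \Ad(w_i) \circ \theta_{\gamma_i} - \varphi|_N\| \;\to\; 0
\]
follows immediately from $\Ad(w_i)\circ\theta_{\gamma_i} \to \id_N$ in $\Aut(N)$. No appeal to $\gamma_i \to 1$ or to the semifinite spectral gap is needed.

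The real obstacle is the step you list as ``it approximately fixes the $v_\gamma$''. Since $\Gamma$ is abelian, $\Ad(w_i v_{\gamma_i})(v_\gamma) = w_i \theta_\gamma(w_i^*)\, v_\gamma$, so you need $w_i \theta_\gamma(w_i^*) \to 1$ strongly for every $\gamma \in \Gamma$. From the hypothesis one gets (conjugating by $\theta_\gamma$ and using abelianness again) that $\Ad(w_i \theta_\gamma(w_i^*)) \to \id_N$; fullness of $N$ then forces $w_i \theta_\gamma(w_i^*) \to g(\gamma)\,1$ along an ultrafilter, for some character $g \in \widehat\Gamma = G$. But a priori $g$ need not be trivial, and in that case $\Ad(w_i v_{\gamma_i})$ converges not to $\id_M$ but to the dual automorphism $\widehat\theta_g$. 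This is the crux of the paper's proof: one then uses fullness of $M$ (so $\Inn(M)$ is closed) to conclude that $\widehat\theta_g$ is inner, and the irreducibility $N' \cap M = \C 1$ (Connes' property~(iii) in the list preceding the theorem) to force the implementing unitary to be scalar, whence $g = 1_G$. Only after this extra loop does $(x_i)$ become genuinely centralizing and your $\rE_N$ argument finish the job. Your proposal skips this character obstruction entirely.
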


The conclusion of Theorem \ref{thm-crossed-product} may seem surprising at first glance. Indeed, even though $\Gamma$ may be dense in $\R^*_+$, the image of $\theta(\Gamma)$ in $\Out(N)$ is nevertheless always discrete.

\begin{proof}[Proof of Theorem \ref{thm-crossed-product}]
First, we have that $N$ isomorphic to the {\em discrete core} of $M$ and hence $N$ is full by \cite[Proposition 5]{TU14}. 

Next, we prove that the image of $\theta(\Gamma)$ in $\Out(N)$ is discrete. By contradiction, assume that the image of $\theta(\Gamma)$ in $\Out(N)$ is not discrete. Then there exists a sequence $(\gamma_n)_{n \in \N}$ in $\Gamma \setminus \{1_\Gamma\}$ such that $\pi_N(\theta_{\gamma_n}) \to 1$ in $\Out(N)$ as $n \to \infty$. Since $\Out(N) = \Aut(N)/\Inn(N)$, there exists a sequence of unitaries $(u_n)_{n \in \N}$ in $\mathcal U(N)$ such that $\Ad(u_n) \circ \theta_{\gamma_n} \to \id_N$ in $\Aut(N)$ as $n \to \infty$. Since $\Gamma$ is abelian, for every $\gamma \in \Gamma$, we have 
\begin{align*}
\Ad(\theta_{\gamma}(u_n)) \circ \theta_{\gamma_n} &= \theta_{\gamma} \circ \Ad(u_n) \circ \theta_{\gamma}^{-1} \circ \theta_{\gamma_n}  \\
&= \theta_{\gamma} \circ \Ad(u_n) \circ \theta_{\gamma_n} \circ \theta_{\gamma}^{-1} \to \id_N \quad \text{in } \Aut(N) \text{ as } n \to \infty.
\end{align*}
Since the map $\Aut(M) \times \Aut(M) \to \Aut(M) : (g, h) \mapsto gh^{-1}$ is continuous for the $u$-topology, this further implies that for every $\gamma \in \Gamma$, we have
\begin{align*}
\Ad(u_n\theta_{\gamma}(u_n^{*})) &= \Ad(u_n) \circ \Ad(\theta_{\gamma}(u_n))^{-1} \\
&= \left( \Ad(u_n) \circ \theta_{\gamma_n} \right) \circ \left(\Ad(\theta_{\gamma}(u_n)) \circ \theta_{\gamma_n} \right)^{-1} \to \id_N \quad \text{ in } \Aut(N) \text{ as } n \to \infty.
\end{align*} 

Fix a nonprincipal ultrafilter $\omega \in \beta(\N) \setminus \N$. Since $N$ is full and hence $N_\omega = \C 1$, there exists a mapping $g : \Gamma \to \mathbf T$ such that for every $\gamma \in \Gamma$, we have that $u_n \theta_{\gamma}(u_n^{*}) - g(\gamma)1 \to 0$ strongly as $n \to \omega$.  It is easy to see that $g : \Gamma \to \mathbf T$ is a group homomorphism which means exactly that $g \in G= \widehat{\Gamma}$. 

\begin{claim}\label{claim}
We have $g = 1_G$ and $\Ad(u_n v_{\gamma_n}) \to \id_M$ in $\Aut(M)$ as $n \to \omega$. 
\end{claim}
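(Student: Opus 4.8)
Set $w_n := u_n v_{\gamma_n} \in \mathcal U(M)$. The plan is to show that $\Ad(w_n) \to \widehat\theta_g$ in $\Aut(M)$ as $n \to \omega$, where $\widehat\theta_g$ is the automorphism given by the dual action at the point $g \in G = \widehat\Gamma$ (so that $\widehat\theta_g|_N = \id_N$ and $\widehat\theta_g(v_\gamma) = \langle g, \gamma \rangle v_\gamma$). Granting this, since $M$ is full, $\Inn(M)$ is closed in $\Aut(M)$, hence $\widehat\theta_g = \Ad(v)$ for some $v \in \mathcal U(M)$; and since $\widehat\theta_g$ fixes $N$ pointwise, $v \in N' \cap M$, which equals $\C 1$ by property $(\rm iii)$. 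Therefore $\widehat\theta_g = \id_M$, which forces $g = 1_G$ (evaluate on the $v_\gamma$'s) and then $\Ad(w_n) \to \widehat\theta_g = \id_M$, which is exactly the claim.

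\textbf{Pointwise convergence.} I would compute $\Ad(w_n)$ on the generators of $M = N \rtimes_\theta \Gamma$, using that $\Gamma$ is abelian so that $v_{\gamma_n} v_\gamma v_{\gamma_n}^* = v_\gamma$. For $x \in N$ one finds $w_n x w_n^* = u_n \theta_{\gamma_n}(x) u_n^* = (\Ad(u_n) \circ \theta_{\gamma_n})(x)$, which converges $\ast$-strongly to $x$ because $\Ad(u_n) \circ \theta_{\gamma_n} \to \id_N$ in $\Aut(N)$. For $\gamma \in \Gamma$ one finds $w_n v_\gamma w_n^* = u_n \theta_\gamma(u_n^*)\, v_\gamma$, which converges $\ast$-strongly to $g(\gamma)\, v_\gamma$ as $n \to \omega$ by the defining property of $g$. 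Applying the ($\ast$-strongly continuous) automorphism $\widehat\theta_g^{-1}$ — which fixes $N$ pointwise and sends $v_\gamma$ to $\overline{\langle g, \gamma\rangle} v_\gamma$ — it follows that $\widehat\theta_g^{-1} \circ \Ad(w_n) \to \id_M$ pointwise $\ast$-strongly on the $\ast$-algebra generated by $N$ and $\{v_\gamma : \gamma \in \Gamma\}$, which is $\ast$-strongly dense in $M$.

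\textbf{Upgrading to the $u$-topology.} To conclude $\widehat\theta_g^{-1} \circ \Ad(w_n) \to \id_M$ in $\Aut(M)$ I would invoke Remark \ref{rem-convergence}, for which I still need a faithful state $\varphi \in M_\ast$ with $\|\varphi \circ \widehat\theta_g^{-1} \circ \Ad(w_n) - \varphi\| \to 0$ as $n \to \omega$. I would fix a faithful normal state $\varphi_0$ on the factor $N$ and put $\varphi = \varphi_0 \circ \rE_N$, a faithful normal state on $M$; since $\rE_N \circ \widehat\theta_g = \rE_N$, this $\varphi$ is $\widehat\theta_g$-invariant, so it suffices to bound $\|\varphi \circ \Ad(w_n) - \varphi\|$. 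The key identity here is $\rE_N \circ \Ad(w_n) = (\Ad(u_n) \circ \theta_{\gamma_n}) \circ \rE_N$ (again a direct crossed-product computation using that $\Gamma$ is abelian), which gives $\varphi \circ \Ad(w_n) = \bigl(\varphi_0 \circ \Ad(u_n) \circ \theta_{\gamma_n}\bigr) \circ \rE_N$; since $\|\chi \circ \rE_N\|_{M_\ast} \le \|\chi\|_{N_\ast}$ for every $\chi \in N_\ast$ and $\|\varphi_0 \circ \Ad(u_n) \circ \theta_{\gamma_n} - \varphi_0\|_{N_\ast} \to 0$ by hypothesis, we obtain $\|\varphi \circ \Ad(w_n) - \varphi\|_{M_\ast} \to 0$.

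\textbf{Main obstacle.} The crossed-product bookkeeping is routine; the only delicate step is the passage from pointwise $\ast$-strong convergence to convergence in the $u$-topology, i.e. producing a faithful state $\varphi$ that is invariant under $\widehat\theta_g$ and essentially invariant under the $\Ad(w_n)$. This is precisely why it pays to work with $\varphi = \varphi_0 \circ \rE_N$ rather than with the (infinite) weight $\psi$, and why it is essential that the hypothesis gives $\Ad(u_n) \circ \theta_{\gamma_n} \to \id_N$ in $\Aut(N)$ and not merely pointwise strongly.
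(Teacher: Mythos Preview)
Your proposal is correct and follows essentially the same route as the paper: you establish pointwise $\ast$-strong convergence of $(\widehat\theta_g)^{-1}\circ\Ad(u_n v_{\gamma_n})$ on the algebraic crossed product, verify the state condition via the identity $\rE_N\circ\Ad(u_n v_{\gamma_n}) = (\Ad(u_n)\circ\theta_{\gamma_n})\circ\rE_N$ for a state of the form $\varphi = \varphi_0\circ\rE_N$, apply Remark~\ref{rem-convergence}, and then use fullness of $M$ together with $N'\cap M=\C 1$ to force $\widehat\theta_g$ to be trivial. This matches the paper's argument step for step.
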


\begin{proof}[Proof of the Claim]
Firstly, we have that $\Ad(u_n)(v_\gamma) = u_n v_\gamma u_n^{*}\to \langle g, \gamma \rangle \, v_\gamma = \widehat\theta_g(v_\gamma)$ strongly as $n \to \omega$ for every $\gamma \in \Gamma$. Since $\Gamma$ is abelian, this further implies that  $\Ad(u_n v_{\gamma_n})(v_\gamma) \to \widehat\theta_g(v_\gamma)$ strongly as $n \to \omega$ for every $\gamma \in \Gamma$. Since $\Ad(u_n) \circ \theta_{\gamma_n} \to \id_N$ in $\Aut(N)$ as $n \to \infty$, we have that $\Ad( u_n v_{\gamma_n} )(x) \to x$ strongly as $n \to \infty$ for every $x \in N$. This altogether implies that $((\widehat\theta_g)^{-1} \circ \Ad(u_n v_{\gamma_n}) )(x) - x \to 0$ strongly as $n \to \infty$ for every $x \in N \rtimes_\theta^{\alg} \Gamma$. Note that $N \rtimes_\theta^{\alg} \Gamma$ is strongly dense in $M = N \rtimes \Gamma$.

Secondly, fix a faithful normal state $\varphi \in M_\ast$ such that $\varphi = \varphi \circ \rE_N$. Then we have $\varphi = \varphi |_N \circ \rE_N$. We also have $\varphi \circ \widehat \theta_g = \varphi$. Since $\Ad(u_n) \circ \theta_{\gamma_n} \to \id_N$ in $\Aut(N)$ as $n \to \infty$, we have $\lim_n \|\varphi |_N \circ \Ad(u_n) \circ \theta_{\gamma_n} - \varphi |_N\| = 0$. We moreover have 
\begin{align*}
 & \| \varphi \circ \Ad( u_n v_{\gamma_n} ) - \varphi \| \\
 &=  \| \varphi|_N \circ \rE_N \circ \Ad( u_n v_{\gamma_n} ) - \varphi |_N \circ \rE_N \| \\
&= \| \varphi|_N  \circ \Ad(u_n) \circ \rE_N \circ \Ad( v_{\gamma_n} ) - \varphi|_N \circ \rE_N \| \quad (\text{since } u_n \in \mathcal U(N))\\
&= \| \varphi|_N  \circ \Ad(u_n) \circ \theta_{\gamma_n} \circ \rE_N - \varphi|_N \circ \rE_N \| \quad (\text{since } \theta_{\gamma_n} \circ \rE_N = \rE_N \circ \Ad(v_{\gamma_n}))\\
&= \| \varphi|_N \circ \Ad(u_n) \circ \theta_{\gamma_n}- \varphi|_N \| \to 0 \quad \text{as } n \to \infty.
\end{align*}
This implies that $\lim_n \|\varphi \circ (\widehat\theta_g)^{-1} \circ \Ad(u_n v_{\gamma_n})  - \varphi \| = \lim_n \|\varphi \circ \Ad(u_n v_{\gamma_n})  - \varphi \| = 0$. 

By Remark \ref{rem-convergence}, we obtain $(\widehat\theta_g)^{-1} \circ \Ad(u_n v_{\gamma_n}) \to \id_M$ in $\Aut(M)$ as $n \to \omega$. Thus, $\Ad(u_n v_{\gamma_n}) \to \widehat\theta_g$ in $\Aut(M)$ as $n \to \omega$. Since $M$ is full, \cite[Theorem 3.1]{Co74} implies that there exists $u \in \mathcal U(M)$ such that $\widehat\theta_g = \Ad(u)$. For every $x \in N$, since $\widehat\theta_g(x) = x$, we have $u x u^* = x$ and hence $u \in N' \cap M = \C 1$. Thus, $u \in \mathbf T 1$ and hence $\widehat\theta_g = \id_M$ which implies that $g = 1_G$. We finally have that $\Ad(u_n v_{\gamma_n}) \to \id_M$ in $\Aut(M)$ as $n \to \omega$. The finishes the proof of the Claim.
\end{proof}  

Since $M$ is full and since $U = (u_n v_{\gamma_n})^\omega \in M_\omega = \C 1$ by Claim \ref{claim}, we obtain $\lim_{n \to \omega} \|u_n v_{\gamma_n} - \psi(u_n v_{\gamma_n})1\|_\psi = \|U - \psi^\omega(U)1\|_{\psi^\omega} = 0$. Since $\psi(u_n v_{\gamma_n}) = (\psi \circ \rE_N)(u_n v_{\gamma_n}) = \psi(u_n \rE_N(v_{\gamma_n})) = \delta_{\gamma_n, 1_\Gamma} \, \psi(u_n)$ and since $u_n v_{\gamma_n} \in \mathcal U(M)$ for every $n \in \N$, we have $\{n \in \N : \gamma_n = 1_\Gamma\} \in \omega$ and hence $\{n \in \N : \gamma_n = 1_\Gamma\} \neq \emptyset$. This contradicts the assumption that $\gamma_n \in \Gamma \setminus \{1_\Gamma\}$ for every $n \in \N$. Therefore, the image of $\theta(\Gamma)$ in $\Out(N)$ is discrete.
\end{proof}

\begin{proof}[Proof of Theorem \ref{main-thm-almost-periodic}]
Put $\mathcal{M}=M \ovt F$ where $F$ is a type $\mathrm{I}_\infty$ factor with separable predual. Define the almost periodic faithful normal semifinite weight on $\mathcal{M}$ by the formula $\psi = \varphi \otimes \mathrm{Tr}_F$.  Put $p=1 \otimes e$ with $e \in F$ any minimal projection. Then we have $(M, \varphi) \cong (p\mathcal{M}p, p \psi p)$. As we explained before, we have a crossed product decomposition $\mathcal{M}\cong \mathcal{M}_\psi \rtimes_\theta \Gamma$ and we moreover know that $\mathcal{M}_\psi$ is full and that the image of $\theta(\Gamma)$ in $\Out(\mathcal{M}_\psi)$ is discrete by Theorem \ref{thm-crossed-product}. Hence we may apply Theorem \ref{crossed-product-finite} to $\mathcal{M}_\psi$ and we obtain $\kappa > 0$ and $a_1,\dots,a_k \in p\mathcal{M}_\psi p=M_\varphi$ such that for all $x \in \mathcal{M}$, and in particular for all $x \in M=p\mathcal{M}p$, we have
\begin{equation*}
 \| x- \varphi(x)1 \|_\varphi^{2} \leq \kappa \sum_{j=1}^k \| a_k x-xa_k \|_\varphi^2. \qedhere
\end{equation*}
\end{proof}

\section{Unique McDuff decomposition: proof of Theorem \ref{main-thm-mcduff}}

We will use the following terminology throughout this section. Let $M$ be any $\sigma$-finite von Neumann algebra and $A \subset 1_A M 1_A$ any $B \subset 1_B M 1_B$ any von Neumann subalgebras with expectation. Following \cite{Po01, Po03, HI15}, we say that $A$ {\em embeds with expectation into} $B$ {\em inside} $M$ and write $A \preceq_M B$, if there exist projections $e \in A$ and $f \in B$, a nonzero partial isometry $v \in eMf$ and a unital normal $\ast$-homomorphism $\theta: eAe \to fBf$ such that the inclusion $\theta(eAe) \subset fBf$ is with expectation and $av = v\theta(a)$ for all $a \in eAe$. We refer the reader to \cite{HI15, BH16} for further information regarding Popa's intertwining theory.

We first need to prove the following result that will be used in the proof of Theorem \ref{main-thm-mcduff}. This is nothing but a generalization of \cite[Lemma 4.6]{HI15}.

\begin{lem}\label{lemma intertwining in tensor}
Let $M$ and $N$ be any $\sigma$-finite von Neumann algebras, $1_A$ and $1_B$ any nonzero projections in $M$, $A \subset 1_A M 1_A$  and $B\subset 1_{B}M1_{B}$ any von Neumann subalgebras with expectation. We will simply denote by $B \ovt N$ the von Neumann subalgebra of $(1_B \otimes 1_N)(M \ovt N)(1_B \otimes 1_N)$ generated by $B \otimes \C 1_N$ and $\C1_B \otimes N$.

The following conditions are equivalent:
\begin{itemize}
\item [$(\rm i)$] $A \preceq_M B$.
\item [$(\rm ii)$] $A \otimes \C 1_N \preceq_{M \ovt N} B \otimes \C 1_N$.
\item [$(\rm iii)$] $A \otimes \C 1_N \preceq_{M \ovt N} B \ovt N$.
\end{itemize}
\end{lem}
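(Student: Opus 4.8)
The implications $(\rm ii) \Rightarrow (\rm iii)$ and $(\rm i) \Rightarrow (\rm ii)$ are the easy directions. For $(\rm ii) \Rightarrow (\rm iii)$ this is immediate since $B \otimes \C1_N \subset B \ovt N$ is a with-expectation inclusion (use the conditional expectation $\id_B \otimes \psi$ for any faithful state $\psi$ on $N$), and $A' \preceq_M B'$ implies $A' \preceq_M B''$ whenever $B' \subset B''$ is with expectation. For $(\rm i) \Rightarrow (\rm ii)$, given projections $e \in A$, $f \in B$, a partial isometry $v \in eMf$ and a $\ast$-homomorphism $\theta : eAe \to fBf$ realizing $A \preceq_M B$, one simply tensors with $1_N$: the partial isometry $v \otimes 1_N \in (e \otimes 1_N)(M \ovt N)(f \otimes 1_N)$ and the homomorphism $\theta \otimes \id : eAe \otimes \C1_N \to fBf \otimes \C1_N$ realize $A \otimes \C1_N \preceq_{M \ovt N} B \otimes \C1_N$; here one checks that $\theta(eAe) \otimes \C1_N \subset fBf \otimes \C1_N$ is with expectation, which follows by tensoring the given expectation with $\id_N$ restricted appropriately.

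The substantive direction is $(\rm iii) \Rightarrow (\rm i)$, and the plan is to argue by contraposition using the analytic characterization of non-intertwining (as in \cite{HI15, BH16}): $A \not\preceq_M B$ means there exists a net of unitaries $(u_n)$ in $\mathcal U(A)$ (suitably interpreted, possibly $u_n \in \mathcal U(eAe)$ for a net of projections, or using the bounded-net formulation of intertwining for type III algebras from \cite{HI15}) such that $\|\rE_B(a u_n b)\| \to 0$ for all $a, b$ in a suitable dense set, where $\rE_B$ is an expectation onto $B$ after composing with an expectation onto $1_B M 1_B$. The goal is then to promote this to a net witnessing $A \otimes \C1_N \not\preceq_{M \ovt N} B \ovt N$. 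One takes the conditional expectation $\rE_{B \ovt N} : M \ovt N \to B \ovt N$ built as $\rE_B \otimes \id_N$ (after composing with the cut-down expectations), and for $x = a \otimes \xi$ and $y = b \otimes \zeta$ elementary tensors one computes $\rE_{B \ovt N}((a \otimes \xi)(u_n \otimes 1)(b \otimes \zeta)) = \rE_B(a u_n b) \otimes \xi\zeta$, whose norm tends to $0$. By linearity and density (and a standard $\varepsilon/3$ argument handling the non-multiplicativity of norms of expectations, using the Kaplansky-type density on the algebraic tensor product), this extends to all $x, y$ in a $\|\cdot\|_\infty$-dense $\ast$-subalgebra, which yields $A \otimes \C1_N \not\preceq_{M \ovt N} B \ovt N$.

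The main obstacle I anticipate is purely bookkeeping around the type III / non-tracial setting: the intertwining-by-bimodules criterion of \cite{HI15} is stated in terms of a fixed faithful state and the associated $\|\cdot\|_\varphi$-norms rather than operator norms, and one must be careful that the net of unitaries witnessing $A \not\preceq_M B$ genuinely extends, under tensoring with $1_N$, to a net witnessing the failure for the larger algebra $B \ovt N$ and not merely for $B \otimes \C1_N$. The key point making this work is that $\rE_{B \ovt N} = \rE_B \otimes \id_N$ on the relevant corner, so that the extra tensor leg $N$ is simply carried along and does not interfere with the asymptotic vanishing; once this identity for the expectation is established, the argument is a direct transcription of \cite[Lemma 4.6]{HI15}. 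I would also double-check that all inclusions in play ($B \otimes \C1_N \subset B \ovt N \subset (1_B \otimes 1_N)(M \ovt N)(1_B \otimes 1_N)$, and the cut-downs) are with expectation, so that the statement ``$\preceq$'' is even well-posed in each of $(\rm i)$--$(\rm iii)$; this is where one uses $\sigma$-finiteness and the existence of faithful normal states.
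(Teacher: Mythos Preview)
Your outline is sound and the contrapositive route through nets of unitaries does work; it is essentially the method of \cite[Lemma~4.6]{HI15}, which this lemma generalizes. The paper, however, proves $(\rm iii)\Rightarrow(\rm i)$ by a different and more direct argument. It sets $\widetilde B = B \oplus \C 1_B^\perp \subset M$, uses the identification of basic constructions
\[
\langle M\ovt N,\ \widetilde B\ovt N\rangle \;=\; \langle M,\widetilde B\rangle \ovt N,
\]
and invokes the completely-positive-map characterization of intertwining from \cite[Theorem~2]{BH16}: assumption $(\rm iii)$ yields a normal $(A\otimes\C1_N)$-bimodular cp map $\widetilde\Phi:\langle M,\widetilde B\rangle\ovt N\to A\otimes\C1_N$ with $\widetilde\Phi\big((1_A\otimes 1)J(1_B\otimes 1)J\big)\neq 0$; restricting to the first tensor leg produces $\Phi:\langle M,\widetilde B\rangle\to A$ with $\Phi(1_A J 1_B J)\neq 0$, which gives $(\rm i)$ by the reverse implication of \cite[Theorem~2]{BH16}.

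What each approach buys: the paper's argument is a two-line restriction of a cp map along a tensor embedding and never touches nets or density. Your approach is more elementary in that it stays with the unitary-net criterion, but the density step you flag is genuinely more delicate here than in the tracial case. One slip: the algebraic tensor product is only $\sigma$-strongly dense, not $\|\cdot\|_\infty$-dense, so your $\varepsilon/3$ must use Kaplansky, and extending in the variable $y$ (where $(y-y_0)^*$ multiplies from the left) does not bound $\|\rE_{B\ovt N}((y-y_0)^*(u_i\otimes 1)x)\|_\varphi$ by $\|y-y_0\|_\varphi$ in any obvious way. This can be circumvented --- e.g.\ by working with $\sigma$-strong$^*$ convergence and treating $z_i$ and $z_i^*$ symmetrically, or by passing through the finite-trace-element formulation in the basic construction --- but it is exactly the bookkeeping the cp-map route avoids.
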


\begin{proof}
It is obvious that $(\rm i) \Rightarrow (\rm ii) \Rightarrow (\rm iii)$.

 $(\rm iii) \Rightarrow (\rm i)$ Put $\widetilde B = B \oplus \C 1_B^\perp$ and observe that $\widetilde B \subset M$ is with expectation. We have the canonical identifications $\langle M \ovt N, \widetilde B \ovt N \rangle = \langle M, \widetilde B\rangle \ovt N$ with $\rL^2(M \ovt N) = \rL^2(M) \otimes \rL^2(N)$ and $J^{M \ovt N} = J^M \otimes J^N$. Since $A \otimes \C 1_N \preceq_{M \ovt N} B \ovt N$, by \cite[Theorem 2 $(\rm i) \Rightarrow (\rm iii)$]{BH16}, there exists a normal $(A \otimes \C 1_N)$-$(A \otimes \C 1_N)$-bimodular completely positive map $\widetilde \Phi : \langle M \ovt N, \widetilde B \ovt N \rangle \to A \otimes \C 1_N$ such that $\widetilde \Phi((1_A \otimes 1_N) J^{M \ovt N} (1_B \otimes 1_N) J^{M \ovt N}) \neq 0$. Define the normal $A$-$A$-bimodular completely positive map $\Phi : \langle M, \widetilde B\rangle \to A$ by $\widetilde \Phi(T \otimes 1_N) = \Phi(T) \otimes 1_N$ for all $T \in \langle M, \widetilde B\rangle$. Since $\Phi(1_A J^M 1_B J^M) \otimes 1_N = \widetilde \Phi(1_A J^M 1_B J^M \otimes 1_N) = \widetilde \Phi((1_A \otimes 1_N) J^{M \ovt N} (1_B \otimes 1_N) J^{M \ovt N}) \neq 0$, we have $\Phi(1_A J^M 1_B J^M)  \neq 0$. By \cite[Theorem 2 $(\rm iii) \Rightarrow (\rm i)$]{BH16}, we obtain $A \preceq_M B$.
\end{proof}

We next define the notion of {\em stable unitary conjugacy} for tensor product decompositions. 

\begin{df} \label{stable_conj}
Let $M$ be any $\sigma$-finite factor. A {\em tensor product decomposition} of $M$ is a pair $(A_1,A_2)$ of subalgebras of $M$ such that $M=A_1 \ovt A_2$. Two decompositions $(A_1,A_2)$ and $(B_1,B_2)$ are said to be:
\begin{itemize}

\item {\em unitarily conjugate} if there exists $u \in \mathcal U(M)$ such that $uA_i u^*=B_i$ for all $i \in \{1,2 \}$. We then write $(A_1,A_2) \sim (B_1,B_2)$.

\item {\em stably unitarily conjugate} if there exist type $\mathrm{I}$ factors with separable predual $F_1$ and $F_2$ such that the two tensor product decompositions $(A_1 \ovt F_1,A_2 \ovt F_2)$ and $(B_1 \ovt F_1, B_2 \ovt F_2)$ are unitarily conjugate in $M \ovt F_1 \ovt F_2$. We then write $(A_1,A_2) \sim_\infty (B_1,B_2)$.

\end{itemize}
\end{df}

We record in Proposition \ref{prop-useful} below several useful properties of the notion of (stable) unitary conjugacy.

\begin{prop}\label{prop-useful}
Let $M$ be any $\sigma$-finite factor. The following properties hold true:
\begin{itemize}

\item [$(\rm i)$] The relations $\sim$ and $\sim_\infty$ are equivalence relations.

\item [$(\rm ii)$]  If $M=A_1 \ovt A_2$ is a tensor product decomposition of $M$ and if $F$ is a type $\mathrm{I}$ factor with separable predual then $(A_1 \ovt F,A_2) \sim_\infty (A_1,F \ovt A_2)$ in $M \ovt F$.

\item [$(\rm iii)$]  If $M=A \ovt F=B \ovt G$ are two tensor product decompositions of $M$ where $F$ and $G$ are infinite type $\mathrm{I}$ factors with separable predual then we can find a nonzero partial isometry $v \in M$ with $v^*v=p \in A$ and $vv^*=q \in B$ such that $v \, pAp \, v^*=qBq$ and $v \, pFp \, v^*=qGq$. If both $A$ and $B$ are infinite then we can choose $p=q=1$ so that $(A,F) \sim (B,G)$.

\item [$(\rm iv)$]  If $M=A_1 \ovt A_2=B_1 \ovt B_2$ are two tensor product decompositions of $M$ then we have $(A_1,A_2) \sim_\infty (B_1,B_2)$ if and only if there exist nonzero projections $p_i \in A_i$ and $q_i \in B_i$ and a partial isometry $v \in M$ with $v^*v=p:=p_1p_2$ and $vv^*=q:=q_1q_2$ such that $v \, pA_1p \, v^*=qB_1q$ and $v \, pA_2p \, v^*=qB_2q$. If $A_i$ and $B_i$ are infinite for every $i \in \{1, 2\}$ then we can choose $p=q=1$ so that $(A_1,A_2) \sim (B_1,B_2)$.

\item [$(\rm v)$]  If $M=A_1 \ovt A_2=B_1 \ovt B_2$ are two tensor product decompositions of $M$ then we have $A_1 \preceq_M B_1$ if and only if there exists a tensor product decomposition $B_1=C \ovt D$ such that $(A_1,A_2) \sim_\infty (C,D \ovt B_2)$.
\end{itemize}
\end{prop}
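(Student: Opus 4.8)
I would treat the five items in order; the substance is in $(\rm iii)$ and in the forward direction of $(\rm v)$. For $(\rm i)$: reflexivity and symmetry of $\sim$ and $\sim_\infty$ are immediate and $\sim$ is transitive by composing unitaries, while for transitivity of $\sim_\infty$, if $(A_1,A_2)\sim_\infty(B_1,B_2)$ is witnessed by type ${\rm I}$ factors $F_1,F_2$ and $(B_1,B_2)\sim_\infty(C_1,C_2)$ by $G_1,G_2$, I would amplify the first conjugating unitary by $1_{G_1\ovt G_2}$ and the second by $1_{F_1\ovt F_2}$ and compose them inside $M\ovt(F_1\ovt G_1)\ovt(F_2\ovt G_2)$, using $B_i\ovt F_i\ovt G_i=B_i\ovt G_i\ovt F_i$. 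For $(\rm ii)$, amplify $M\ovt F$ by $E_1=E_2=\mathbf B(\ell^2)$ and observe that in the type ${\rm I}_\infty$ factor $F\ovt E_1\ovt E_2$ the tensor splittings $(F\ovt E_1)\ovt E_2$ and $E_1\ovt(F\ovt E_2)$ are unitarily conjugate (uniqueness of the decomposition of $\mathbf B(H)$ into two type ${\rm I}_\infty$ factors); the conjugating unitary $u_0\in F\ovt E_1\ovt E_2$ commutes with $A_1$ and $A_2$, and $\mathrm{Ad}(u_0)$ carries $\big((A_1\ovt F)\ovt E_1,\ A_2\ovt E_2\big)$ onto $\big(A_1\ovt E_1,\ (F\ovt A_2)\ovt E_2\big)$, which is exactly $(A_1\ovt F,A_2)\sim_\infty(A_1,F\ovt A_2)$.

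Item $(\rm iii)$ is the structural core. A tensor splitting $M=A\ovt F$ with $F$ of type ${\rm I}_\infty$ is the same datum as a system of matrix units $(e_{ij})_{i,j\in\N}$ in $M$ with $\sum_ie_{ii}=1$, via $F=\{e_{ij}\}''$ and $A=\{e_{ij}\}'\cap M$; here $A$ is a factor (as $M$ is) and $A$ is infinite iff $e_{11}$ is an infinite projection in $M$. Given two such systems $(e_{ij})$, $(f_{ij})$, I would invoke the comparison theorem in the factor $M$ to assume $e_{11}$ is equivalent to a subprojection $f_0\le f_{11}$, say $w^*w=e_{11}$, $ww^*=f_0$, and set $v:=\sum_i f_{i1}\,w\,e_{1i}$; a short computation gives $v^*v=1$, $q:=vv^*=\sum_i f_{i1}f_0f_{1i}\in B$, that $(ve_{ij}v^*)_{ij}$ is a system of matrix units summing to $q$, and hence $vAv^*=qBq$, $vFv^*=qGq$ — the claim with $p=1$. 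When $A$ and $B$ are both infinite, $e_{11}$ and $f_{11}$ are infinite projections in the factor $M$, hence equivalent, so $w$ may be chosen with $ww^*=f_{11}$, and $v$ becomes a unitary with $vAv^*=B$, $vFv^*=G$.

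For $(\rm iv)$, the direction $(\Rightarrow)$ follows from $(\rm iii)$: the $\sim_\infty$-unitary $U$ gives $B_i\ovt F_i=UA_iU^*\ovt UF_iU^*$, a tensor splitting of $B_i\ovt F_i$ with the same type ${\rm I}_\infty$ leg type as the standard one, so $(\rm iii)$ yields partial isometries matching $UA_iU^*$ with corners of $B_i$ and $UF_iU^*$ with $F_i$; composing with $U$ and using that a unitary of $M\ovt F_1\ovt F_2$ normalizing $F_1$ and $F_2$ restricts to one of $M$ produces $v\in M$ and corners $p_i\in A_i$, $q_i\in B_i$ (an honest unitary when all $A_i$, $B_i$ are infinite). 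For $(\Leftarrow)$, amplify by $E_1=E_2=\mathbf B(\ell^2)$; nonzero projections in a factor have full central support, so $p_i\otimes1_{E_i}$ is infinite in the factor $A_i\ovt E_i$, hence $\sim1$, and one can pick $s_i\in A_i\ovt E_i$ with $s_i^*s_i=p_i\otimes1$, $s_is_i^*=1$, and likewise $t_i\in B_i\ovt E_i$ for $q_i$; then $U:=(t_1t_2)(v\otimes1_{E_1\ovt E_2})(s_1s_2)^*$ is a unitary of $M\ovt E_1\ovt E_2$ carrying $A_i\ovt E_i$ onto $B_i\ovt E_i$ (blow the corner down by $(s_1s_2)^*$, apply $v$, blow up by $t_1t_2$), so $(A_1,A_2)\sim_\infty(B_1,B_2)$.

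Finally $(\rm v)$. For $(\Leftarrow)$: if $B_1=C\ovt D$ and $(A_1,A_2)\sim_\infty(C,D\ovt B_2)$, then after amplification $A_1\ovt F_1$ is unitarily conjugate to $C\ovt F_1\subseteq B_1\ovt F_1$, so $A_1\ovt F_1\preceq_{M\ovt F_1\ovt F_2}B_1\ovt F_1$; stripping the amplifying factors by Lemma \ref{lemma intertwining in tensor}, and using that a subalgebra with expectation of an algebra embedding with expectation also embeds, gives $A_1\preceq_M B_1$. The direction $(\Rightarrow)$ is the main obstacle. I would feed $A_1\preceq_M B_1$ into the intertwining theory of \cite{HI15,BH16}: since $A_1$ is a factor the intertwining $*$-homomorphism is automatically injective, so a maximally chosen intertwiner realizes a copy of $A_1$ as a subfactor of a corner of $B_1$; using that $A_1'\cap M=A_2$ is a factor, one should upgrade this — after amplification, with $(\rm ii)$, $(\rm iii)$ and $(\rm iv)$ — to the statement that $A_1$ is stably unitarily conjugate to a tensor factor $C$ of $B_1$ with complement $D$, the remaining coordinate being $D\ovt B_2$. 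Making this upgrade precise, i.e.\ showing the corner subfactor really splits off $B_1$ and that its relative commutant is exactly $D\ovt B_2$, is where I expect the real work to be.
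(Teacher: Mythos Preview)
Your treatment of $(\rm i)$--$(\rm iv)$ is correct and matches the paper's argument. One small correction in $(\rm iv)(\Rightarrow)$: a partial isometry \emph{normalizing} $F_1$ and $F_2$ need not lie in $M$. What actually happens is that, after applying $(\rm iii)$ in each leg to get $v_i\in N_i$, the element $v_iU$ conjugates (a corner of) $F_i$ into $F_i$, hence induces an automorphism of $F_i$; since $F_i$ is type~${\rm I}$ this automorphism is inner, so correcting $v_i$ by some $w_i\in\mathcal U(F_i)$ makes $v_iU$ \emph{commute} with $F_i$. Then $V=v_1v_2U$ commutes with $F_1\ovt F_2$ and therefore $V\in (F_1\ovt F_2)'\cap(M\ovt F_1\ovt F_2)=M$. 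This is exactly the paper's argument; your sketch just needs this one extra sentence.

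For $(\rm v)(\Rightarrow)$, where you flag the difficulty, the paper does not develop the ``upgrade'' from scratch: it invokes \cite[Lemma 4.13]{HI15} as a black box. That lemma, applied to factors, gives nonzero projections $p_i\in A_i$, $q_i\in B_i$ and a partial isometry $v$ with $v^*v=p_1p_2$, $vv^*=q_1q_2$ such that $v(p_1A_1p_1)v^*\subset q_1B_1q_1$; after arranging $q_1$ to be minimal in some type~${\rm I}$ subfactor $K\subset B_1$, the second part of the same lemma produces a splitting $q_1B_1q_1=P\ovt Q$ with $v(p_1A_1p_1)v^*=qPq$ and $v(p_2A_2p_2)v^*=Q\ovt q_2B_2q_2$. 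The remaining step is to pass from the corner splitting to a genuine splitting $B_1=C\ovt D$: one picks an isomorphism $\phi:q_1B_1q_1\ovt K\to B_1$ with $\phi(x\otimes q_1)=x$, sets $C=\phi(P\ovt K)$, $D=\phi(Q)$, checks that $q_1\in C$, $q_2\in D\ovt B_2$, $v(pA_1p)v^*=qCq$ and $v(pA_2p)v^*=q(D\ovt B_2)q$, and concludes by $(\rm iv)$. So your instinct about what must be shown is exactly right; the work is already packaged in that lemma.
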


\begin{proof}
$(\rm i)$ It is obvious that $\sim$ is an equivalence relation and that $\sim_\infty$ is symmetric and reflexive. Let us prove that $\sim_\infty$ is transitive. Suppose that $(A_1,A_2)$, $(B_1,B_2)$ and $(C_1,C_2)$ are three tensor product decompositions of $M$ such that $(A_1,A_2) \sim_\infty (B_1,B_2)$ and $(B_1,B_2) \sim_\infty (C_1,C_2)$. Then we can find type $\mathrm{I}$ factors with separable predual $F_i$ and $G_i$ for $i \in \{1,2\}$ such that $(A_1 \ovt F_1,A_2 \ovt F_2) \sim (B_1 \ovt F_1, B_2 \ovt F_2)$ in $M \ovt F_1 \ovt F_2$ and $(B_1 \ovt G_1, B_2 \ovt G_2) \sim (C_1 \ovt G_1, C_2 \ovt G_2)$ in $M \ovt G_1 \ovt G_2$. Then we naturally have $(A_1 \ovt F_1 \ovt G_1, A_2 \ovt F_2 \ovt G_2) \sim (C_1 \ovt F_1 \ovt G_1, C_2 \ovt F_2 \ovt G_2)$ in $M \ovt (F_1 \ovt G_1) \ovt (F_2 \ovt G_2)$. Hence $(A_1,A_2) \sim_\infty (C_1,C_2)$ as we wanted.

$(\rm ii)$ Let $G_1,G_2$ be two infinite type $\mathrm{I}$ factors with separable predual. Then $G_1$ is isomorphic to $G_1 \ovt F$ and $F \ovt G_2$ is isomorphic to $G_2$. Hence we can find an automorphism $\phi$ of $G_1 \ovt F \ovt G_2$ such that $\phi(G_1)=G_1 \ovt F$ and $\phi(F \ovt G_2)=G_2$. Since $G_1 \ovt F \ovt G_2$ is a type $\mathrm{I}$ factor, $\phi$ is inner. Hence $(G_1,F \ovt G_2) \sim (G_1 \ovt F, G_2)$ in $G_1 \ovt F \ovt G_2$. Therefore $(A_1 \ovt G_1, A_2 \ovt F \ovt G_2) \sim (A_1 \ovt F \ovt G_1, A_2 \ovt G_2)$ which means exactly that $(A_1,F \ovt A_2) \sim_\infty (A_1 \ovt F, A_2)$ as we wanted.

$(\rm iii)$ Take $(e_{k,l})_{k,l \in \N}$ and $(f_{k,l})_{k,l \in \N}$ two systems of matrix units for $F$ and $G$ respectively. Choose $p$ and $q$ two nonzero projections in $A$ and $B$ respectively such that $p e_{00}$ and $q f_{00}$ are equivalent in $M$. If both $A$ and $B$ are infinite, then we can take $p=q=1$. Take $v \in M$ such that $v^*v=p e_{00}$ and $vv^*=q f_{00}$ and define $V=\sum_{k \in \N}f_{k0}ve_{0k}$. Then we have $V^*V=p$, $VV^*=q$ and $V \, pAp \, V^*=qBq$ and $V \, pFp \,V^*=qGq$.  

$(\rm iv)$  Firstly, we prove the ``if" direction. Take nonzero projections $p_i \in A_i$ and $q_i \in B_i$ and a partial isometry $v \in M$ with $v^*v=p:=p_1p_2$ and $vv^*=q:=q_1q_2$ such that $v \, pA_1p \, v^*=qB_1q$ and $v \, pA_2p \,v^*=qB_2q$. Take $F_1,F_2$ two infinite type $\mathrm{I}$ factors with separable predual. Then there exist isometries $r_i \in A_i \ovt F_i$ and $s_i \in B_i \ovt F_i$ such that $r_ir_i^*=p_i$ and $s_is_i^*=q_i$. Then $U=(s_1s_2)^*v(r_1r_2)$ is a unitary in $M \ovt F_1 \ovt F_2$ such that $U(A_i \ovt F_i)U^*=B_i \ovt F_i$.

Secondly, we prove the ``only if" direction. Assume that $(A_1,A_2) \sim_\infty (B_1,B_2)$. Then we can find two  type $\mathrm{I}$ factors with separable predual $F_1,F_2$ and a unitary $u \in \mathcal U(M \ovt F_1 \ovt F_2)$ such that $u(A_i \ovt F_i)u^*=B_i \ovt F_i$ for every $i\in \{1,2\}$. Now, by applying $(\rm iii)$ to the two tensor product decompositions of $N_i=uA_iu^* \ovt uF_i u^*=B_i \ovt F_i$, we see that we can find a nonzero partial isometry $v_i \in N_i$ with $v_i^*v_i=up_iu^* \in uA_iu^*$ and $v_iv_i^*=q_i \in B_i$ (and $p_i=q_i=1$ if $A_i$ and $B_i$ are infinite) such that $(v_iu) \, p_iA_ip_i \, (v_iu)^*=q_iB_iq_i$ and $(v_iu) \, p_iF_ip_i \, (v_iu)^*=q_iF_iq_i$ for every $i\in \{1,2\}$. Then there is a unique automorphism $\phi_i$ of $F_i$ such that $\phi(x)v_iu=v_iux$ for all $x \in F_i$ and $\phi_i$ must be inner because $F_i$ is of type $\mathrm{I}$. Thus, up to replacing $v_i$ by $w_iv_i$ for some unitary $w_i \in F_i$, we may assume that $v_iu$ commutes with $F_i$.  Finally, if we let $V=v_1 v_2u \in M \ovt F_1 \ovt F_2=N_1 \ovt N_2$, we see that $V$ commutes with $F_1 \ovt F_2$ which means that $V \in M$ and we have $V^*V=p:=p_1p_2$, $VV^*=q:=q_1q_2$ and $V \, pA_ip \,V^*=q_iB_iq$ for every $i\in \{1,2 \}$. Moreover, if all $A_i$ and $B_i$ are infinite then we can choose $p=q=1$ and hence $(A_1,A_2) \sim (B_1,B_2)$.

$(\rm v)$ By \cite[Lemma 4.13]{HI15}, we can find nonzero projections $p_i \in A_i$ and $q_i \in B_i$ and a partial isometry $v \in M$ with $v^*v=p=p_1p_2$ and $vv^*=q=q_1q_2$ such that $v \, p A_1 p \, v^* \subset qB_1q$. By reducing $q_1$ if necessary, we may further assume that $q_1$ is a minimal projection in some type $\mathrm{I}$ subfactor $K$ of $B_1$. By the second part of \cite[Lemma 4.13]{HI15}, we know that we have a decomposition $q_1B_1q_1=P \ovt Q$ such that $v \, pA_1p \, v^*=qPq$ and $v \, pA_2p \, v^*=Q \ovt qB_2q$. Now, pick an isomorphism $\phi : q_1B_1q_1 \ovt K \rightarrow B_1$ such that $\phi(x \otimes q_1)=x$ for all $x \in q_1B_1q_1$ and let $C=\phi(P \ovt K)$ and $D=\phi(Q)$. Then we have $B_1=C \ovt D$, $p_i \in A_i$, $q_1 \in C$, $q_2 \in D \ovt B_2$ and $v \, pA_1p \, v^*=qCq$ and $v \, pA_2p \, v^*=q(D \ovt B_2)q$. Hence by $(\rm iv)$, we obtain $(A_1,A_2) \sim_\infty (C, D \ovt B_2)$. Conversely, if  $(A_1,A_2) \sim_\infty (C, D \ovt B_2)$ with $B_1=C \ovt D$, then there exist type $\mathrm{I}$ factors $F_1$ and $F_2$ such that $A_1 \ovt F_1$ is unitarily conjugate to $C \ovt F_1$ in $M \ovt F_1 \ovt F_2$ which implies that $A_1 \preceq_M C$ by Lemma \ref{lemma intertwining in tensor} and therefore $A_1 \preceq_M B_1$.
\end{proof}

\begin{prop} \label{intertwine_mcduff}
Let $\mathcal{M}$ be any McDuff factor with separable predual and with two McDuff decompositions $\mathcal{M}=M_1 \ovt P_1 =M_2 \ovt P_2$. The following conditions are equivalent:

\begin{itemize}

\item [$(\rm i)$] $M_1 \preceq_{\mathcal{M}} M_2$.

\item [$(\rm ii)$]  $(M_1, P_1)$ and $(M_2,P_2)$ are stably unitarily conjugate. 

\end{itemize}
\end{prop}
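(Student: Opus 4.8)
The plan is to reduce both implications to the formal properties of stable unitary conjugacy collected in Proposition~\ref{prop-useful}, the only genuinely nontrivial input being the structural fact that \emph{every amenable factor with separable predual that is not of type~${\rm I}$ is McDuff}.

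For the implication $(\rm ii) \Rightarrow (\rm i)$, I would argue as follows. Given $(M_1,P_1) \sim_\infty (M_2,P_2)$, use the trivial tensor product decomposition $M_2 = M_2 \ovt \C1$: with $C = M_2$ and $D = \C1$ one has $D \ovt P_2 = P_2$, hence $(M_1,P_1) \sim_\infty (M_2, P_2) = (C, D \ovt P_2)$. Proposition~\ref{prop-useful}$(\rm v)$, applied with $A_1 = M_1$, $A_2 = P_1$, $B_1 = M_2$, $B_2 = P_2$, then immediately yields $M_1 \preceq_{\mathcal M} M_2$. This direction is essentially free.

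For $(\rm i) \Rightarrow (\rm ii)$, I would start from $M_1 \preceq_{\mathcal M} M_2$ and invoke the substantial half of Proposition~\ref{prop-useful}$(\rm v)$ to produce a tensor product decomposition $M_2 = C \ovt D$ with $(M_1,P_1) \sim_\infty (C, D \ovt P_2)$. Unwinding Definition~\ref{stable_conj}, this stable unitary conjugacy provides a type~${\rm I}$ factor $G$ with separable predual and an isomorphism $P_1 \ovt G \cong D \ovt P_2 \ovt G$. Since $P_1$, $P_2$ and $G$ are amenable, $D \ovt P_2 \ovt G$ is amenable; as $D$ embeds with expectation into it via a slice map, $D$ is an amenable factor, and it has separable predual as a tensor factor of $M_2 \subset \mathcal M$ (note that $C$ and $D$ are factors because $\mathcal Z(C) \ovt \mathcal Z(D) = \mathcal Z(M_2) = \C1$). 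The heart of the argument is then to show that $D$ must be of type~${\rm I}$: if it were not, then by the quoted structural fact $D$ would be McDuff, so $M_2 = C \ovt D$ would be McDuff as well (since $D \cong D \ovt R$ forces $C \ovt D \cong (C \ovt D) \ovt R$), contradicting the hypothesis that $\mathcal M = M_2 \ovt P_2$ is a McDuff decomposition, i.e.\ that $M_2$ is non-McDuff. Once $D$ is known to be a type~${\rm I}$ factor with separable predual, apply Proposition~\ref{prop-useful}$(\rm ii)$ with $C$, $P_2$, $D$ in the roles of $A_1$, $A_2$, $F$ to obtain $(M_2, P_2) = (C \ovt D, P_2) \sim_\infty (C, D \ovt P_2)$; combining this with $(M_1,P_1)\sim_\infty(C, D\ovt P_2)$ and transitivity of $\sim_\infty$ (Proposition~\ref{prop-useful}$(\rm i)$) gives $(M_1,P_1) \sim_\infty (M_2,P_2)$, as desired.

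The main obstacle is precisely the step showing that $D$ is of type~${\rm I}$; everything else is bookkeeping with Proposition~\ref{prop-useful}. That step hinges on the fact that a non-type-${\rm I}$ amenable factor with separable predual is McDuff, which is a consequence of the classification of amenable factors (Connes in types~${\rm II}$ and ${\rm III}_\lambda$ with $\lambda < 1$, including the flow-of-weights parametrization in type~${\rm III}_0$, and Haagerup in type~${\rm III}_1$), each such factor being realizable as an infinite tensor product and hence absorbing the hyperfinite type~${\rm II}_1$ factor $R$. This plays here the role that Connes' spectral gap characterization of full type~${\rm II}_1$ factors plays in the corresponding type~${\rm II}_1$ statement.
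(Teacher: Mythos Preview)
Your proposal is correct and follows exactly the paper's approach: both directions reduce to Proposition~\ref{prop-useful}$(\rm v)$, and in the nontrivial implication $(\rm i)\Rightarrow(\rm ii)$ you show that the factor $D$ in the decomposition $M_2=C\ovt D$ is amenable (via the stable isomorphism $P_1\ovt G\cong D\ovt P_2\ovt G$) and then necessarily of type~${\rm I}$ (since $M_2$ is non-McDuff), before concluding via Proposition~\ref{prop-useful}$(\rm ii)$ and transitivity. Your writeup supplies a bit more justification for the step ``$D$ non-type-${\rm I}$ amenable $\Rightarrow$ $M_2$ McDuff'' than the paper does, but the argument is the same.
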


\begin{proof}
$(\rm i) \Rightarrow (\rm ii)$  By Proposition \ref{prop-useful} $(\rm v)$, we know that there exists a tensor product decomposition $M_2=C \ovt D$ such that $(M_1,P_1) \sim_\infty (C,D \ovt P_2)$. Hence $P_1$ is stably isomorphic to $D \ovt P_2$ and therefore $D$ is amenable. But $M_2=C \ovt D$ is not McDuff, hence $D$ must be type $\mathrm{I}$. Therefore, by Proposition \ref{prop-useful} $(\rm ii)$, we have $(C,D \ovt P_2) \sim_\infty (C \ovt D, P_2)=(M_2,P_2)$. Finally, by Proposition \ref{prop-useful} $(\rm i)$, we conclude that $(M_1,P_1) \sim_\infty (M_2,P_2)$.

$(\rm ii) \Rightarrow (\rm i)$ follows from Proposition \ref{prop-useful} $(\rm v)$.
\end{proof}

\begin{proof}[Proof of Theorem \ref{main-thm-mcduff}]
$(\rm i) \Rightarrow (\rm ii)$ Assume that $\mathcal{M}$ has a McDuff decomposition $\mathcal{M}=M\ovt P$ where $M$ is a full factor and $P$ is a non-type ${\rm I}$ amenable factor. Fix a faithful normal conditional expectation $\rE_P : \mathcal{M} \to P$. Let $\mathcal{M}= M_0 \ovt P_0$ be another McDuff decomposition. We want to show that these two decompositions are stably unitarily conjugate. By Proposition \ref{intertwine_mcduff}, it suffices to show that $M \preceq_{\mathcal{M}} M_0$. 

Assume first that $P_0$ is of type $\mathrm{III}_1$. Then $P_0 \cong R_\infty$ is the unique Araki--Woods factor of type ${\rm III_1}$ \cite{Co75b, Co85, Ha85} and hence we can write $P_0$ as an infinite tensor product
\begin{equation}\label{eq-Araki--Woods}
(P_0, \varphi)=\bigovt_{n \in \N} (\mathbf M_2(\C) \otimes \mathbf M_2(\C),\omega_{\lambda_1} \otimes \omega_{\lambda_2})
\end{equation}
where $0 < \lambda_1, \lambda_2 < 1$, $\frac{\log \lambda_1}{\log \lambda_2} \notin \Q$ and $\omega_{\lambda_i} = \frac{1}{1 + \lambda_i}\Tr_{\mathbf M_2(\C)}(\, \cdot \, \diag(1, \lambda_i))$ for every $i \in \{1, 2\}$. Then $\varphi \in (P_0)_\ast$ is an almost periodic faithful state satisfying $((P_0)_\varphi)' \cap P_0 = \C 1$. Indeed, \eqref{eq-Araki--Woods} implies that $(P_0, \varphi) \cong (R_{\lambda_1}, \varphi_{\lambda_1}) \ovt (R_{\lambda_2}, \varphi_{\lambda_2})$ where $R_{\lambda_i}$ is the Powers factor of type ${\rm III}_{\lambda_i}$ for every $i \in \{1, 2\}$. Since $((R_{\lambda_i})_{\varphi_{\lambda_i}})' \cap R_{\lambda_i} = \C 1$ for every $i \in \{1, 2\}$ and since $(R_{\lambda_1})_{\varphi_{\lambda_1}} \ovt (R_{\lambda_2})_{\varphi_{\lambda_2}} \subset (P_0)_\varphi$, we indeed have $((P_0)_\varphi)' \cap P_0 = \C 1$ (see also \cite[Example 1.6]{Ha85}). For every $n \in \N$, define the finite dimensional unital $\ast$-subalgebra $Q_n \subset R_\infty$ by $Q_n = \bigovt_{k = 0}^n(\mathbf M_2(\C) \otimes \mathbf M_2(\C),\omega_{\lambda_1} \otimes \omega_{\lambda_2})$. We show that there exists $n \in \N$ such that $(Q_n' \cap P_0)_\varphi \preceq_{\mathcal{M}} P$.

Assume by contradiction that $(Q_n' \cap P_0)_\varphi \npreceq_{\mathcal{M}} P$ for every $n \in \N$. By \cite[Theorem 4.3 (5)]{HI15}, we can find a sequence $u_n \in \mathcal{U}((Q_n ' \cap P_0)_\varphi)$ such that $\rE_P(u_n) \to 0$ strongly as $n \to \infty$. Fix a nonprincipal ultrafilter $\omega \in \beta(\N) \setminus \N$. By construction, we have $(u_n)^\omega \in \mathcal{M}_\omega$. Hence we must have $(u_n)^\omega = (\rE_P(u_n))^\omega$ by Theorem \ref{ultraproduct_full}. This contradicts the fact that $(\rE_P(u_n))^\omega = 0$. From this contradiction, we deduce that there exists $n \in \N$ such that $(Q_n' \cap P_0)_\varphi \preceq_{\mathcal{M}} P$. Since $(Q_n' \cap P_0, \varphi|_{Q_n' \cap P_0}) \cong (P_0, \varphi)$, we have $\left( (Q_n' \cap P_0)_\varphi \right)' \cap \mathcal M = M_0 \ovt Q_n$. Then \cite[Lemma 4.9]{HI15} implies that $M \preceq_{\mathcal{M}} M_0 \ovt Q_n$. Since $Q_n$ is a type ${\rm I}$ factor, \cite[Remark 4.2 (2) and Remark 4.5]{HI15} imply that $M \preceq_{\mathcal{M}} M_0$. Assume now that $P_0$ is any non-type ${\rm I}$ amenable factor. Then we can consider $\mathcal{M} \ovt R_\infty = M \ovt (P \ovt R_\infty) = M_0 \ovt (P_0 \ovt R_\infty)$. Since $P_0 \ovt R_\infty$ is of type $\mathrm{III}_1$ by \cite[Corollary 6.8]{CT76} and amenable, we can apply the result obtained in the previous paragraph and we obtain $M \otimes \C1_{R_\infty} \preceq_{\mathcal{M} \ovt R_\infty} M_0 \otimes \C 1_{R_\infty}$. Hence, by Lemma \ref{lemma intertwining in tensor}, we finally obtain $M \preceq_{\mathcal{M}} M_0$.

$(\rm ii) \Rightarrow (\rm i)$ Assume that $\cM$ is a McDuff factor with separable predual and with a  McDuff decomposition $\cM = M \ovt P$ where $M$ is not McDuff and $P$ is any non-type ${\rm I}$ amenable factor. Assuming that $M$ is not full and following the proof of \cite[Theorem B]{Ho15}, we show the existence of $\Psi \in \Aut(\cM)$ such that $\Psi(P) \not\preceq_\cM P$. By Proposition \ref{intertwine_mcduff} and \cite[Lemma 4.9]{HI15}, this will prove that $\mathcal M$ has two McDuff decompositions that are not stably unitarily conjugate.
 
Since $M$ is not full, \cite[Theorem 3.1]{HU15} shows there exist a diffuse abelian subalgebra $B \subset M$ with faithful normal conditional expectation $\rE_B : M \to B$ and a $M$-central sequence of mutually commuting projections $(q_n)_{n\in \N}$ in $B$ such that $q_n \to \frac12$ $\sigma$-weakly as $n \to \infty$. Choose a faithful state $\phi \in M_\ast$ such that $\phi \circ \rE_B = \phi$. Since $P$ is a non-type ${\rm I}$ amenable factor, $P$ is McDuff (by combining results in \cite{Co72, Co75b, CT76, Co85, Ha85}) and we may write $P = P_0 \ovt R$ where $R$ is the hyperfinite type $\II_1$ factor and $P_0 \cong P$. Observe that $\mathcal M = M \ovt P_0 \ovt R$. Choose any faithful state $\phi_0 \in (P_0)_\ast$ and put $\vphi = \phi \otimes \phi_0 \otimes \tau \in \cM_\ast$. 

Write $(R, \tau) = \bigovt_\N (\mathbf M_2(\C), \Tr_{\mathbf M_2(\C)})$. Let $\pi_n : \mathbf M_2(\C) \to R$ be the trace preserving embedding at the $n$-th position and put 
\begin{align*}
 p_n &= \pi_n\left(\begin{pmatrix}1&0\\0&0\end{pmatrix}\right),& v_n &= \pi_n\left(\begin{pmatrix}0&1\\1&0\end{pmatrix}\right),& w_n &= 1 - 2p_n q_n. 
\end{align*}
By construction, $(w_n)_{n \in \N}$ is a sequence of mutually commuting unitaries in $\mathcal U(\cM_\vphi)$. Moreover, since $w_n^* = w_n$ for every $n \in \N$ and since $(w_n)_{n \in \N}$ is a centralizing sequence, we have that $\Ad(w_n) \to \id_{\mathcal M}$ in $\Aut(\mathcal M)$ as $n \to \infty$. Since $\mathcal{M}$ has separable predual, $\Aut(\mathcal M)$ is a Polish group. Fix a complete metric $\rm d$ on $\Aut(\mathcal M)$ compatible with the $u$-topology. We may choose inductively a subsequence $(w_{n_k})_{k\in \N}$ such that $w_{n_0} = w_0$ and 
$${\rm d}(\Ad(w_{n_0}\cdots w_{n_k}), \Ad(w_{n_0}\cdots w_{n_{k + 1}})) < 2^{-(k + 1)}$$ for every $k \in \N$. Put $W_k = w_{n_0}\cdots w_{n_k} \in \mathcal U(\mathcal M_\varphi)$ for every $k \in \N$. The triangle inequality yields ${\rm d} (\Ad(W_p), \Ad(W_q)) < 2^{- p}$ for all $q \geq p$ and hence $(\Ad(W_p))_{p \in \N}$ is a Cauchy sequence in $\Aut(\mathcal M)$. Since the metric ${\rm d}$ is complete on $\Aut(\mathcal M)$, the sequence $(\Ad(W_p))_{p \in \N}$ converges to $\Psi \in \Aut(\mathcal M)$ with respect to the $u$-topology. 

Observe that we now have a new McDuff decomposition $\mathcal M = \Psi (\mathcal M) = \Psi(M) \ovt \Psi(P)$. It remains to prove that $\Psi(P) \not\preceq_\cM P$. Note that $W_{\ell} v_{n_k} W_{\ell} = (1 - 2 p_{n_k} q_{n_k})v_{n_k}(1 - 2p_{n_k}q_{n_k}) = (1 - 2q_{n_k})v_{n_k}$ for all $\ell \geq k$ and hence $\Psi(v_{n_k}) = (1-2q_{n_k})v_{n_k}$ for every $k \in \N$. In particular, for every $x \in M$, we have
\begin{align*}
\| \rE_P(x\Psi(v_{n_k}))\|_\vphi &=  \| \rE_P(x(1-2q_{n_k})v_{n_k})\|_\vphi\\
&=  \| \rE_P(x(1-2q_{n_k}))v_{n_k}\|_\vphi \quad (\text{since } v_{n_k} \in \mathcal U(P)) \\ 
&=  \| \rE_P(x(1-2q_{n_k}))\|_\vphi \quad \quad \; \, (\text{since } v_{n_k} \in \mathcal U(\mathcal M_\varphi)) \\
&= | \vphi(x(1-2q_{n_k}))|.
\end{align*}
Since $q_{n_k} \to \frac12$ $\sigma$-weakly as $k \to \infty$, we have that $\lim_k \vphi(xq_{n_k}) = \frac12\vphi(x)$ and hence 
$$\lim_{k\to\infty} \| \rE_P(x \Psi(v_{n_k}))\|_\vphi = \lim_{k\to\infty} | \vphi(x) -2 \vphi(xq_{n_k}))| = 0.$$ 
Since $(\Psi(v_{n_k}))_{k \in \N}$ is moreover a centralizing sequence in $\mathcal M$, we obtain that 
$$\lim_k \|\rE_P(x^* \Psi(v_{n_k}) y)\|_\vphi = \lim_k \|\rE_P(x^* y\Psi(v_{n_k}))\|_\vphi = 0$$ for all $x,y \in \spn (M \cdot P)$. By \cite[Theorem 4.3 (5)]{HI15}, we obtain that $\Psi(R)\not\preceq_\cM P$. Finally, since the unital inclusion $\Psi(R) \subset \Psi(P)$ is with expectation, \cite[Lemma 4.8]{HI15} shows that $\Psi(P)\not\preceq_\cM P$.
\end{proof}

\bibliographystyle{plain}

\end{document}